\declaretheorem[name=Remark, numbered=no, style=remark]{remark}
\declaretheorem[name=Example, style=remark]{ex}
\newtheorem{thm}{Theorem}[section]
\newtheorem{prop}[thm]{Proposition}
\newtheorem{lemma}[thm]{Lemma}
\theoremstyle{definition}
\def\R{\mathbb R}
\newcounter{stepnum}
\DeclareMathOperator{\Rm}{R}
\DeclareMathOperator{\diverg}{div}
\DeclareMathOperator{\End}{End}
\DeclareMathOperator{\supp}{supp}
\DeclareMathOperator{\Ric}{Ric}
\newcommand{\dd}{\mathop{}\!\mathrm{d}}
\newcommand{\A}{\mathbb{A}}
\newcommand{\D}{\slashed{D}}
\newcommand{\p}{\partial}
\newcommand{\al}{\alpha}
\newcommand{\be}{\beta}
\newcommand{\na}{\nabla}
\newcommand{\pd}{\slashed{\partial}}
\newcommand{\vep}{\varepsilon}
\newcommand{\dv}{\dd vol}
\newcommand{\dt}[1]{\frac{\dd}{\dd t}\Big|_{t=#1}}
\newcommand{\sph}{\mathbb{S}}
\def\bee{\begin{eqnarray}}
\def\beee{\begin{eqnarray*}}
\def\eee{\end{eqnarray}}
\def\eeee{\end{eqnarray*}}
\def\ba{\begin{array}}
\def\ea{\end{array}}
\def\R{\mathbb R}
\title[Energy quantization]{Energy quantization for a nonlinear sigma model\\ with critical gravitinos}
\begin{document}

\author{J\"{u}rgen Jost, Ruijun Wu and Miaomiao Zhu}

\address{Max Planck Institute for Mathematics in the Sciences\\Inselstr. 22--26\\D-04103 Leipzig, Germany}
	\email{jjost@mis.mpg.de}

\address{Max Planck Institute for Mathematics in the Sciences\\Inselstr. 22--26\\D-04103 Leipzig, Germany}
	\email{Ruijun.Wu@mis.mpg.de}

\address{School of Mathematical Sciences, Shanghai Jiao Tong University\\Dongchuan Road 800\\200240 Shanghai, P.R.China}
	\email{mizhu@sjtu.edu.cn}
	
\thanks{The third author was supported in part by National Science Foundation of China (No. 11601325).}

\date{\today}

\begin{abstract}
 We study some analytical and geometric properties of a two-dimensional nonlinear sigma model with gravitino which comes from supersymmetric string theory. When the action is critical w.r.t. variations of the various fields including the gravitino, there is a symmetric, traceless and divergence-free energy-momentum tensor, which gives rise to a holomorphic quadratic differential. Using it we obtain a Pohozaev type identity and finally we can establish the energy identities along a weakly convergent sequence of fields with uniformly bounded energies.
\end{abstract}

\keywords{nonlinear sigma-model, Dirac-harmonic map, gravitino, supercurrent, Pohozaev identity, energy identity}

\maketitle

\section{Introduction}

The 2-dimensional nonlinear sigma models constitute important models in  quantum field theory. They have not only physical applications, but also geometric implications, and therefore their properties have been the focus of important lines of research. In mathematics, they arise as two-dimensional harmonic maps and pseudo holomorphic curves. In modern physics the basic matter fields are described by vector fields as well as spinor fields, which are coupled by  supersymmetries. The base manifolds are two-dimensional, and therefore their  conformal and  spin structures come into play. From the physics side, in the 1970s  a supersymmetric 2-dimensional nonlinear sigma model was proposed in \cite{brink1976locally, deser1976complete}; the name ``supersymmetric'' comes from the fact that the action functional is invariant under certain transformations of the matter fields, see for instance  \cite{deligne1999quantum, jost2009geometry}. From the perspective of geometric analysis, they seem to be natural candidates for   a variational approach, and one might expect that the powerful variational methods developed for harmonic maps and pseudo holomorphic curves could be applied here as well. However, because of the various spinor fields involved, new difficulties arise. The geometric aspects have been developed in mathematical terms in   \cite{kessler2014functional}, but this naturally involves anti-commuting variables which are not amenable to inequalities, and therefore variational methods cannot be applied, and one rather needs algebraic tools. This would lead to what one may call  super harmonic maps. Here, we adopt a different approach. We transform the anti-commuting variables into  commuting ones, as in ordinary Riemannian geometry. In particular, the domains of the action functionals are ordinary Riemann surfaces instead of super Riemann surfaces. Then one has more fields to control, not only the maps between Riemannian manifolds and Riemannian metrics, but also their super partners. Such a model was developed and investigated in \cite{jost2016regularity}. Part of the symmetries, including some super symmetries, are inherited, although some essential supersymmetries are hidden or  lost. As is known, the symmetries of such functionals are quite important for the analysis, in order to overcome some analytical problems that arise as we are working in a limiting situation of the Palais-Smale condition. Therefore, here we shall develop a setting with a large symmetry group. This will enable us to carry out the essential steps of the variational analysis. The analytical key will be a Pohozaev type identity.

We will follow the notation conventions of \cite{jost2016regularity}, which are briefly recalled in the following. Let $(M,g)$ be an oriented closed Riemannian surface with a fixed spin structure, and let $S\to M$ be a spinor bundle, of real rank four, associated to the given spin structure. Note that the Levi-Civita connection $\na^M$ on $M$ and the Riemannian metric $g$ induce a spin connection $\na^s$ on $S$ in a canonical way and a spin metric $g_s$ which is a fiberwise real inner product\footnote{Here we take the real  rather than the Hermitian one used in some previous works on Dirac-harmonic maps (with or without curvature term), as clarified in \cite{jost2016regularity}.} , see \cite{lawson1989spin, jost2011riemannian}. The spinor bundle $S$ is a left module over the Clifford bundle $\operatorname{Cl}(M,-g)$ with the Clifford map being denoted by $\gamma\colon TM\to \End(S)$; sometimes  it will be simply denoted by a dot. The Clifford relation reads
\begin{equation}
 \gamma(X)\gamma(Y)+\gamma(Y)\gamma(X)=-2g(X,Y), \qquad \forall X,Y\in\mathscr{X}(M).
\end{equation}
The Clifford action is compatible with the spinor metric and the spin connection, making $S$ into a Dirac bundle in the sense of \cite{lawson1989spin}. Therefore, the bundle $S\otimes TM$ is also a Dirac bundle over $M$, and a section $\chi\in \Gamma(S\otimes TM)$ is taken as a super partner of the Riemannian metric, and called a gravitino. The Clifford multiplication gives rise to a map $\delta_\gamma\colon S\otimes TM\to S$, where $\delta_\gamma(s\otimes v)=\gamma(v)s=v\cdot s$ for $s\in\Gamma(S)$ and $v\in \Gamma(TM)$, and extending linearly. This map is surjective, and moreover the following short exact sequence splits:
\begin{equation}
 0\to ker \to S\otimes TM \xrightarrow{\delta_\gamma} S \to 0.
\end{equation}
The projection map to the kernel is denoted by $Q\colon S\otimes TM\to S\otimes TM$. More explicitly, in a local oriented orthonormal frame $(e_\al)$ of $M$, a section $\chi\in\Gamma(S\otimes TM)$ can be written as $\chi^\al\otimes e_\al$\footnote{Here and in the sequel, the summation convention is always used.}, and the $Q$-projection is given by
\begin{equation}
 \begin{split}
  Q\chi&\coloneqq -\frac{1}{2}\gamma(e_\be)\gamma(e_\al)\chi^\be\otimes e_\al \\
       &=\frac{1}{2}\left( (\chi^1+\omega\cdot\chi^2)\otimes e_1 -\omega\cdot(\chi^1+\omega\cdot\chi^2)\otimes e_2\right),
 \end{split}
\end{equation}
where $\omega=e_1\cdot e_2$ is the real volume element in the Clifford bundle.

Let $(N,h)$ be a compact Riemannian manifold and $\phi\colon M\to N$ a map. One can consider the twisted spinor bundle $S\otimes \phi^*TN$ with bundle metric $g_s\otimes\phi^*h$ and connection $\widetilde{\na}\equiv\na^{S\otimes\phi^*TN}$,  which is also a Dirac bundle, and the Clifford action on this bundle is also denoted by $\gamma$ or simply a dot. A section of this bundle is called a vector spinor, and it serves as a super partner of the map $\phi$ in this model. The twisted spin Dirac operator $\D$ is defined in the canonical way: let $(e_\al)$ be a local orthonormal frame of $M$, then for any vector spinor $\psi\in \Gamma(S\otimes \phi^*TN)$, define
\begin{equation}
 \D\psi\coloneqq \gamma(e_\al)\widetilde{\na}_{e_\al}\psi=e_\al\cdot \widetilde{\na}_{e_\al}\psi.
\end{equation}
It is elliptic and essentially self-adjoint with respect to the inner product in~$L^2(S\otimes\phi^*TN)$. In a local coordinate $(y^i)$ of $N$, write $\psi=\psi^i\otimes \phi^*(\frac{\p}{\p y^i})$, then
\begin{equation}
 \begin{split}
  \D\psi=\pd\psi^i\otimes \phi^*\left(\frac{\p}{\p y^i}\right)
           +\gamma(e_\al)\psi^i\otimes\phi^*\left(\na^N_{T\phi(e_\al)}\frac{\p}{\p y^i}\right),
 \end{split}
\end{equation}
where $\pd$ is the spin Dirac operator on $S$. For later convention, we set
\begin{equation}
 SR(\psi)\coloneqq\langle\psi^l,\psi^j\rangle_{g_s} \psi^k\otimes \phi^*\left(\Rm^N(\frac{\p}{\p y^k},\frac{\p}{\p y^l})\frac{\p}{\p y^j}\right)=R^i_{\;jkl}(\phi)\langle\psi^l,\psi^j\rangle\psi^k\otimes\phi^*\left(\frac{\p}{\p y^i} \right),
\end{equation}
and $\Rm(\psi)\coloneqq\langle SR(\psi),\psi\rangle_{g_s\otimes\phi^*h}$.

The action functional under consideration is given by
\begin{equation}
 \label{action functional}
 \begin{split}
  \A(\phi, \psi;g, \chi)\coloneqq \int_M
  & |\dd \phi|_{g\otimes \phi^*h}^2
	+ \langle \psi, \D \psi \rangle_{g_s\otimes \phi^*h}
   -4\langle (\mathds{1}\otimes\phi_*)(Q\chi), \psi \rangle_{g_s\otimes\phi^*h}\\
  &   -|Q\chi|^2_{g_s\otimes g} |\psi|^2_{g_s\otimes \phi^*h}
	-\frac{1}{6} \Rm(\psi) \dd vol_g,
 \end{split}
\end{equation}
From \cite{jost2016regularity} we know that the Euler--Lagrange equations are
\begin{equation}\label{EL}
 \begin{split}
 \tau(\phi)=&\frac{1}{2}\Rm(\psi, e_\al\cdot\psi)\phi_* e_\al-\frac{1}{12}S\na R(\psi)   \\
            &  -(\langle \na^s_{e_\be}(e_\al \cdot e_\be \cdot \chi^\al), \psi \rangle_{g_s}
	        	+ \langle e_\al \cdot e_\be \cdot \chi^\al, \widetilde{\na}_{e_\be} \psi \rangle_{g_s}),  \\
    \D\psi =& |Q\chi|^2\psi +\frac{1}{3}SR(\psi)+2(\mathds{1}\otimes \phi_*)Q\chi,
 \end{split}
\end{equation}
where $S\na R(\psi)=\phi^*(\na^N R)_{ijkl}\langle\psi^i,\psi^k\rangle_{g_s} \langle\psi^j,\psi^l\rangle_{g_s}$, and
\begin{equation}
 \begin{split}
  \Rm(\psi, e_\al\cdot\psi)\phi_* e_\al
 =&\langle\psi^k,e_\al\cdot\psi^l \rangle_{g_s} e_\al(\phi^j) \phi^*\left(\Rm\left(\frac{\p}{\p y^k},\frac{\p}{\p y^l}\right) \frac{\p}{\p y^j} \right) \\
 =&R^i_{\;jkl}\langle\psi^k,\na\phi^j\cdot\psi^l\rangle\otimes \phi^*\left(\frac{\p}{\p y^i}\right).
 \end{split}
\end{equation}

One notices that this action functional can actually be defined for $(\phi,\psi)$ that possess only little regularity; we only need integrability properties to make the action well defined, that is, $\phi\in W^{1,2}(M,N)$ and $\psi\in \Gamma^{1,4/3}(S\otimes\phi^*TN)$. The corresponding solutions of \eqref{EL} in the sense of distributions are called weak solutions. When the Riemannian metric~$g$ and the gravitino $\chi$ are assumed to be smooth parameters, it is shown in \cite{jost2016regularity} that any weak solution $(\phi,\psi)$ is actually smooth. We will show that these solutions have more interesting geometric and analytical properties. Embed $(N,h)$ isometrically into some Euclidean space~$\R^K$. Then a solution can be represented by a tuple of functions~$\phi=(\phi^1,\cdots,\phi^K)$ taking values in $\R^K$ and a tuple of spinors $\psi=(\psi^1,\cdots,\psi^K)$ where each $\psi^i$ is a (pure) spinor and they together satisfy the condition that at each point $\phi(x)$ in the image, for any normal vector $\nu=(\nu^1,\cdots,\nu^K)\in T^\perp_{\phi(x)} N\subset T_{\phi(x)}\R^K$,
\begin{equation}
 \sum_{i=1}^K \psi^i(x)\nu^i(\phi(x))=0.
\end{equation}
Moreover, writing the second fundamental form of the isometric embedding as $A=(A^i_{jk})$, the Euler--Lagrange equations can be written in the following form (see \cite{jost2016regularity})
\begin{equation}\label{EL-phi-extrinsic}
 \begin{split}
  \Delta\phi^i
    =&A^i_{jk}\langle\na\phi^j,\na\phi^k\rangle+A^i_{jm}A^m_{kl}\langle\psi^j,\na\phi^k\cdot\psi^l\rangle \\
     &      +Z^i(A,\na A)_{jklm} \langle\psi^j,\psi^l\rangle\langle\psi^k,\psi^m\rangle
          -\diverg V^i-A^i_{jk}\langle V^j,\na\phi^k\rangle,
 \end{split}
\end{equation}
\begin{equation}\label{EL-psi-extrinsic}
 \begin{split}
  \pd\psi^i
    =&-A^i_{jk}\na\phi^j\cdot\psi^k+|Q\chi|^2\psi^i
       +\frac{1}{3}A^i_{jm}A^m_{kl}\left(\langle\psi^k,\psi^l\rangle\psi^j-\langle\psi^j,\psi^k\rangle\psi^l\right)\\
     &  -e_\al\cdot\na\phi^i\cdot\chi^\al.
 \end{split}
\end{equation}
Here the $V^i$'s are vector fields on $M$ defined by
\begin{equation}\label{def of V}
V^i= \langle e_\al\cdot e_\be\cdot\chi^\al,\psi^i\rangle e_\be.
\end{equation}
One should note that there is some ambiguity here, because the second fundamental form maps tangent vectors of the submanifold $N$ to normal vectors, so the lower indices of $A^i_{jk}$ should be tangential indices, and the upper ones normal. However, one can extend the second fundamental form to a tubular neighborhood of $N$ in $\R^K$ such that all the $A^i_{jk}$'s make sense. Alternatively, one can rewrite the extrinsic equations without labeling indices, but we want to derive estimates  and see how the second fundamental form $A$ affects the system, hence we adopt this formulation.

This action functional is closely related to Dirac-harmonic maps with curvature term. Actually, if the gravitinos vanish in the model, the action $\A$ then reads
\begin{equation}
 L_c(\phi,\psi)=\int_M |\dd\phi|^2+\langle\psi,\D\psi\rangle-\frac{1}{6}\Rm(\psi)\dv_g,
\end{equation}
whose critical points are known as Dirac-harmonic maps with curvature term. These were firstly introduced in \cite{chen2008liouville} and further investigated in \cite{branding2015some, branding2016energy, jost2015geometric}. Furthermore, if the curvature term is also omitted, then we get the Dirac-harmonic map functional which was  introduced in \cite{chen2005regularity, chen2006dirac} and further explored from the perspective of geometric analysis in e.g. \cite{zhao2006energy, zhu1, zhu2, wang2009regularity, CJWZ, liu, sharp2016regularity}. From the physical perspective, they constitute a simplified version of the model considered in this paper, and describe the behavior of the nonlinear sigma models in degenerate cases.

The symmetries of this action functional always play an important role in the study of the solution spaces, and here especially the rescaled conformal invariance.
\begin{lemma}\label{conformal transformation lemma}
 Let $f\colon (\widetilde{M},\tilde{g})\to (M,g)$ be a conformal diffeomorphism, with $f^*g=e^{2u}\tilde{g}$, and suppose the spin structure of $(\widetilde{M}, \tilde{g})$ is isomorphic to the pullback of the given one of $(M,g)$. There is an identification $B\colon S\to \tilde{S}$ which is an isomorphism and fiberwise isometry such that under the transformation
 \begin{equation}
  \begin{split}
   \phi &\mapsto \tilde{\phi}\coloneqq\phi\circ f,\\
   \psi &\mapsto \tilde{\psi}\coloneqq e^{\frac{u}{2}}(B\otimes\mathds{1}_{\phi^*TN})\psi, \\
   \chi &\mapsto \tilde{\chi}\coloneqq e^{\frac{3u}{2}}(B\otimes(f^{-1})_*)\chi, \\
    g   &\mapsto \tilde{g},
  \end{split}
 \end{equation}
 each summand of the action functional stays invariant, and also
 \begin{equation}
  \int_M |\psi|^4 \dv_g= \int_{\widetilde{M}} |\tilde{\psi}|^4 \dv_{\tilde{g}}.
 \end{equation}
\end{lemma}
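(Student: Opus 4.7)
The plan is to factor $f$ as the isometry $f\colon(\widetilde M,f^*g)\to(M,g)$ followed by the pure conformal rescaling from $f^*g=e^{2u}\tilde g$ down to $\tilde g$ on $\widetilde M$. Since every object in sight (spin bundle, spin metric, Clifford action, Dirac operator, and the projection $Q$) is natural under isometries of spin manifolds, it suffices to verify invariance under the pure conformal change, with $f$ serving only as a subsequent change of variables. I would take $B$ to be the composition of the pullback by $f$ with the classical fiberwise-isometric identification $\bar\beta\colon S_{f^*g}\to S_{\tilde g}$ of spinor bundles between two conformally related metrics.

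Two structural facts underpin the whole calculation. First, $\bar\beta$ is an isometry of spin metrics and intertwines the Clifford actions in the sense that $\gamma_{\tilde g}(\tilde e_\al)\,\bar\beta=\bar\beta\,\gamma_{f^*g}(\hat e_\al)$ whenever $(\tilde e_\al)$ and $(\hat e_\al=e^{-u}\tilde e_\al)$ are orthonormal frames for $\tilde g$ and $f^*g$ respectively. Applied to the explicit formula $Q\chi=-\tfrac12\gamma(e_\be)\gamma(e_\al)\chi^\be\otimes e_\al$, a short calculation yields
\begin{equation}
Q_{\tilde g}\tilde\chi \;=\; e^{3u/2}\bigl(B\otimes(f^{-1})_*\bigr)(Q\chi),
\end{equation}
so that $Q$ itself commutes with the prescribed rescaling. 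Second, the classical two-dimensional conformal transformation formula for the spin Dirac operator gives $\widetilde{\D}\tilde\psi=e^{3u/2}B(\D\psi)$; the tensor factor $\phi^*TN$ that appears for vector spinors is unaffected by the rescaling of the metric on the source.

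With these in hand I would check each summand separately. For $\int|d\phi|^2\,\dv$ this is the classical conformal invariance of the Dirichlet integral in dimension two. For the Dirac term, combining $|\tilde\psi|^2_{\tilde g_s\otimes\tilde\phi^*h}=e^u f^*|\psi|^2$ with the Dirac covariance produces an integrand factor $e^{2u}$, which cancels $\dv_{\tilde g}=e^{-2u}f^*\dv_g$. For the gravitino coupling, the identity $d\tilde\phi(\tilde e_\al)=e^u\phi_*e_\al$ together with the naturality of $Q$ gives $(\mathds{1}\otimes\tilde\phi_*)Q_{\tilde g}\tilde\chi=e^{3u/2}\bar\beta f^*\bigl((\mathds{1}\otimes\phi_*)Q\chi\bigr)$, and pairing with $\tilde\psi=e^{u/2}\bar\beta f^*\psi$ again produces $e^{2u}$, absorbed by $\dv_{\tilde g}$. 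The remaining integrands $|Q\chi|^2|\psi|^2$, $R(\psi)$ and $|\psi|^4$ each scale by $e^{2u}$ (two factors of $e^u$, one from each quadratic spinor pairing or from $|Q\chi|^2$), and are again cancelled by the volume weight.

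The main obstacle, and essentially the only non-bookkeeping step, is proving the naturality of $Q$. Because $Q$ is built from Clifford multiplication and thus depends on the metric, one must verify that the specific weights $(e^{u/2},e^{3u/2})$ on $(\psi,\chi)$ are precisely those for which $Q$ commutes with the rescaling. Once this and the conformal covariance of $\D$ are established, the conformal weights of all six integrands are simultaneously zero, reflecting the fact that on a two-dimensional spin surface $\psi$ has conformal weight $1/2$ and $\chi$ has conformal weight $3/2$.
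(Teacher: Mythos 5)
Your proposal is correct, and it supplies a proof that the paper itself does not spell out: the paper only states the lemma and, in the remark following it, defers the spinor rescaling to Hitchin and to the Dirac-harmonic map literature and the gravitino weights to the companion symmetry paper. Your route --- factoring $f$ into an isometry followed by a pure conformal rescaling, using the fiberwise isometric identification $B$ that intertwines the Clifford actions of the two conformal metrics, invoking the two-dimensional covariance $\widetilde{\D}\tilde{\psi}=e^{3u/2}B(\D\psi)$ (which indeed persists for the twisted operator because the connection on $\phi^*TN$ does not involve the source metric), and checking that $Q$ commutes with the rescaling exactly for the weights $e^{u/2}$ on $\psi$ and $e^{3u/2}$ on $\chi$ --- is precisely the standard argument those references implement, and your bookkeeping (each integrand scaling by $e^{2u}$ against $\dv_{\tilde g}=e^{-2u}f^*\dv_g$, with $d\tilde\phi(\tilde e_\al)=e^{u}\phi_*e_\al$ and $|(f^{-1})_*e_\al|_{\tilde g}=e^{-u}$) is accurate. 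In short, the proof is sound and matches the approach the paper implicitly relies on; for a fully written-out version one would only add the short frame computation establishing $Q_{\tilde g}\tilde\chi=e^{3u/2}(B\otimes(f^{-1})_*)(Q\chi)$, which you correctly identify as the one genuinely metric-dependent step.
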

\begin{remark}
 Furthermore, the following quantities are also invariant under the transformations in the above lemma:
 \begin{align}
  \int_{M}|\chi|^4\dd x, & & \int_M |\widetilde{\na}\psi|^{\frac{4}{3}} \dd x, & &\int_M |\widehat{\na}\chi|^{\frac{4}{3}}\dd x,
 \end{align}
 where $\widehat{\na}\equiv \na^{S\otimes TM}$.
 Also observe that $Q$ is only a linear projection operator, so $Q\chi$ enjoys the same analytic properties as $\chi$. In our model, most time it is only the $Q$-part of $\chi$ which is involved, so all the assumptions and conclusions can be made on the $Q\chi$'s.  The rescaled conformal invariance with respect to $\psi$ was shown in \cite{hitchin1974harmonic}, and see also \cite{chen2006dirac}. As for the gravitino $\chi$, the spinor part has to be rescaled in the same way as $\psi$, while the tangent vector part has to be rescaled in the ordinary way, which gives rise to an additional factor $e^u$, such that the corresponding norms are invariant. For more detailed investigations one can refer to \cite{jost2016note} where more symmetry properties of our nonlinear sigma model with gravitinos are analyzed.
\end{remark}

\begin{ex}
 When the map $f$ is a rescaling by a constant $\lambda$ on the Euclidean space with the standard Euclidean metric $g_0$, then $f^*g_0=\lambda^2 g_0$ and $(f^{-1})_*$ is a rescaling by $\lambda^{-1}$. In this case the gravitino $\chi$ transforms to $\sqrt{\lambda}B\chi^\al\otimes e_\al$, where $e_\al$ is a standard basis for $(\R^2,g_0)$.
\end{ex}

For a given pair $(\phi,\psi)$ and a domain $U\subset M$, the energy of this pair $(\phi,\psi)$ on $U$ is defined to be
\begin{equation}
 E(\phi,\psi;U)\coloneqq \int_U |\dd\phi|^2+|\psi|^4\dv_g,
\end{equation}
and when $U$ is the entire manifold we write $E(\phi,\psi)$ omitting $U$. Similarly, the energy of the map $\phi$ resp. the vector spinor $\psi$ on $U$ is defined by
\begin{align}
 E(\phi;U)\coloneqq \int_U |\dd\phi|^2\dv_g, & &\textnormal{resp.} & & E(\psi;U)\coloneqq \int_U |\psi|^4\dv_g.
\end{align}
From the previous lemma we know that they are rescaling invariant. We will show that whenever the local energy of a solution is small, then some higher derivatives of this solution can be controlled by its energy and some appropriate norm of the gravitino; this is known as the small energy regularity. On the other hand, similar to the theories for harmonic maps and Dirac-harmonic maps, the energy of a solution should not be globally small, that is, when the domain is a  closed surface, in particular the standard sphere, because too small energy forces the solution to be trivial. That is, there are energy gaps between the trivial and nontrivial solutions of \eqref{EL} on some closed surfaces.  These will be shown in Section 2.

To proceed further we restrict to some special gravitinos, i.e. \emph{those gravitinos that are critical with respect to variations}. As shown in \cite{jost2016note}, this is equivalent to the vanishing of the corresponding $\it{supercurrent}$. Then we will see in Section 3 that the energy-momentum tensor, defined using a local orthonormal frame $(e_\al)$ by
\begin{equation}
 \begin{split}
  T
  =&\big\{2\langle \phi_*e_\al,\phi_* e_\be\rangle-|\dd\phi|^2 g_{\al\be}
     +\frac{1}{2}\left\langle\psi,e_\al\cdot\widetilde{\na}_{e_\be}\psi+e_\be\cdot\widetilde{\na}_{e_\al}\psi\right\rangle
                        -\langle\psi,\D_g\psi\rangle g_{\al\be}\\
   & +\langle e_\eta\cdot e_\al\cdot\chi^\eta\otimes \phi_*e_\be+e_\eta\cdot e_\be\cdot\chi^\eta\otimes\phi_*e_\al, \psi\rangle
                 +4\langle(\mathds{1}\otimes\phi_*)Q\chi,\psi\rangle g_{\al\be}\\
   &\qquad + |Q\chi|^2 |\psi|^2 g_{\al\be}
                 +\frac{1}{6}\Rm(\psi)g_{\al\be} \big\}e^\al\otimes e^\be,
 \end{split}
\end{equation}
is symmetric, traceless and divergence free, see Proposition \ref{properties of energy-momentum tensor}. Hence it gives rise to a holomorphic quadratic differential, see Proposition \ref{holomorphicity of T(z)}. In a local conformal coordinate $z=x+iy$, this differential reads
\begin{equation}
 T(z)\dd z^2\coloneqq(T_{11}-iT_{12})(\dd x+i\dd y)^2,
\end{equation}
with
\begin{equation}
 \begin{split}
  T_{11}
  &=\left|\frac{\p\phi}{\p x}\right|^2-\left|\frac{\p\phi}{\p y}\right|^2
   +\frac{1}{2}\left(\langle\psi,\gamma(\p_x)\widetilde{\na}_{\p_x}\psi\rangle-\langle\psi,\gamma(\p_y)\widetilde{\na}_{\p_y}\psi\rangle\right)
    +F_{11},\\
  T_{12}
  &=\left\langle\frac{\p\phi}{\p x},\frac{\p\phi}{\p y}\right\rangle_{\phi^*h}
      +\langle\psi,\gamma(\p_x)\widetilde{\na}_{\p_y}\psi\rangle+F_{12},
 \end{split}
\end{equation}
where in a local chart $\chi=\chi^x\otimes \p_x+\chi^y\otimes\p_y$ and
\begin{equation}
 \begin{split}
  F_{11}&=2\langle-\chi^x\otimes\phi_*(\p_x)-\gamma(\p_x)\gamma(\p_y)\chi^y\otimes\phi_*(\p_x),\psi\rangle
           +2\langle(\mathds{1}\otimes\phi_*)Q\chi, \psi\rangle g(\p_x,\p_x), \\
  F_{12}&=2\langle-\chi^x\otimes\phi_*(\p_y)-\gamma(\p_x)\gamma(\p_y)\chi^y\otimes \phi_*(\p_y),\psi\rangle.
 \end{split}
\end{equation}
Consequently we can establish a Pohozaev type identity for our model in Section 4. This will be the key ingredient for the analysis in the sequel.
\begin{thm}[restate=Pohozaev, label=Pohozaev Identity]
 \textnormal{\textbf{(Pohozaev identity.)}}
 Let $(\phi,\psi)$ be a smooth solution of \eqref{EL} on $B_1^*:=B_1\backslash\{0\}$ with $\chi$ being a critical gravitino which is smooth on $B_1$. Assume that $(\phi,\psi)$ has finite energy on $B_1$. Then for any $0<r<1$,
 \begin{equation}\label{Pohozaev identity}
  \begin{split}
   \int_0^{2\pi}\left|\frac{\p\phi}{\p r}\right|^2-\frac{1}{r^2}\left|\frac{\p\phi}{\p\theta}\right|^2 \dd\theta
   &=\int_0^{2\pi} -\langle\psi, \gamma(\p_r)\widetilde{\na}_{\p_r}\psi\rangle+\frac{1}{6}\Rm(\psi) -(F_{11}\cos{2\theta}+F_{12}\sin{2\theta})\dd\theta \\
   =&\int_0^{2\pi} \left\langle\psi,\frac{1}{r^2}\gamma(\p_\theta)\widetilde{\na}_{\p_\theta}\psi\right\rangle-\frac{1}{6}\Rm(\psi)- (F_{11}\cos{2\theta}+F_{12}\sin{2\theta})\dd\theta.
  \end{split}
 \end{equation}
\end{thm}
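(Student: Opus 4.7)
The central tool is the preceding proposition establishing that, under the critical-gravitino assumption, the function $T(z) = T_{11} - iT_{12}$ is holomorphic on the punctured disk $B_1^*$. My plan is to combine this holomorphicity with the finite-energy hypothesis via a Laurent-expansion / removable-singularity argument, and then to unpack the resulting Cartesian integral identity in polar coordinates.

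The first step is to verify that $T \in L^1(B_1)$: each summand of $T_{11}, T_{12}$ is integrable, the map-derivative parts by $\dd\phi \in L^2$, the spinor-derivative pairings by H\"older applied to $\psi \in L^4$ and the conformally invariant bound $\widetilde{\na}\psi \in L^{4/3}_{\mathrm{loc}}$ noted after Lemma \ref{conformal transformation lemma}, the $\Rm(\psi)$-term by $|\psi|^4 \in L^1$, and the gravitino contributions by smoothness of $\chi$ on $B_1$. A function that is holomorphic on $B_1^*$ and lies in $L^1$ near the origin cannot carry a pole of order $\ge 2$, since $z^n \notin L^1(B_1)$ for $n \le -2$; hence its Laurent expansion reads $T(z) = \sum_{n\ge -1} a_n z^n$, and in particular $a_{-2} = 0$. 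The Cauchy coefficient formula on the circle $|z|=r$ then yields
\begin{equation}
 0 \;=\; 2\pi\,a_{-2} \;=\; r^2 \int_0^{2\pi} T(re^{i\theta})\,e^{2i\theta}\,\dd\theta ,
\end{equation}
and taking real parts gives $\int_0^{2\pi}\bigl(T_{11}\cos 2\theta + T_{12}\sin 2\theta\bigr)\dd\theta = 0$ for every $r \in (0,1)$.

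The remainder is algebra: substitute the explicit formulas for $T_{11}, T_{12}$ and rewrite the integrand in polar coordinates. For the map-derivative part I use the identity
\begin{equation}
 \cos 2\theta\,(|\p_x\phi|^2-|\p_y\phi|^2) + 2\sin 2\theta\,\langle\p_x\phi,\p_y\phi\rangle \;=\; |\p_r\phi|^2 - \tfrac{1}{r^2}|\p_\theta\phi|^2,
\end{equation}
which is just the polar expression of the $(\p_r,\p_r)$-component of the trace-free symmetric part of $\dd\phi\otimes\dd\phi$. The spinor-derivative terms are treated analogously through $\gamma(\p_r) = \cos\theta\,\gamma(\p_x) + \sin\theta\,\gamma(\p_y)$ and the rotation of the spin frame; the extraneous $-\langle\psi,\D\psi\rangle$ piece naturally present in the components of $T$ is converted, via the $\psi$-Euler-Lagrange equation $\D\psi = |Q\chi|^2\psi + \tfrac{1}{3}SR(\psi) + 2(\mathds{1}\otimes\phi_*)Q\chi$, into the $-\tfrac{1}{3}\Rm(\psi)$-contribution plus gravitino terms, which combine with the $F$-parts already in $T_{11}, T_{12}$ to reproduce $F_{11}\cos 2\theta + F_{12}\sin 2\theta$. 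This produces the first equality in \eqref{Pohozaev identity}. The second equality then follows either by repeating the whole argument with $T_{\theta\theta} = -r^2 T_{rr}$ (tracelessness of $T$), or, equivalently, from $\gamma(\p_r)\widetilde{\na}_{\p_r} + \tfrac{1}{r^2}\gamma(\p_\theta)\widetilde{\na}_{\p_\theta} = \D$ combined with the $\psi$-equation once more, which swaps $-\langle\psi,\gamma(\p_r)\widetilde{\na}_{\p_r}\psi\rangle$ for $\langle\psi,\tfrac{1}{r^2}\gamma(\p_\theta)\widetilde{\na}_{\p_\theta}\psi\rangle$ at the cost of a $\tfrac{1}{3}\Rm(\psi)$.

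The step I expect to absorb the bulk of the verification is the spinor/gravitino bookkeeping in this polar rewrite: the Clifford multiplication, the spin connection, and the decomposition $\chi = \chi^x\otimes\p_x + \chi^y\otimes\p_y$ all transform nontrivially under rotation, and one must check that every cross-term recombines precisely into $F_{11}\cos 2\theta + F_{12}\sin 2\theta$ with the correct signs and normalizations.
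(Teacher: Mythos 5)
Your overall strategy is the paper's own: holomorphicity of $T(z)$ (Proposition \ref{holomorphicity of T(z)}) plus integrability near the origin to rule out a pole of order $\ge 2$, then the vanishing of the coefficient $a_{-2}$ (your Cauchy formula is literally the paper's statement $\int_{|z|=r}zT(z)\,\dd z=0$), and finally the polar rewrite using $\langle\psi,\D\psi\rangle=\tfrac13\Rm(\psi)$ for a critical pair and tracelessness for the second equality. That part is fine (your factor $2\sin 2\theta\,\langle\p_x\phi,\p_y\phi\rangle$ is the correct bookkeeping), and your coefficient estimate even covers the essential-singularity case that a pure ``pole'' argument would leave open.

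The genuine gap is in your very first step, the verification that $T\in L^1$ near the puncture. The problematic term is $\langle\psi,\gamma(\cdot)\widetilde{\na}\psi\rangle$: the finite-energy hypothesis only gives $\na\phi\in L^2(B_1)$ and $\psi\in L^4(B_1)$, and the remark after Lemma \ref{conformal transformation lemma} that you cite says only that $\int|\widetilde{\na}\psi|^{4/3}$ is a conformally invariant quantity --- it asserts nothing about its finiteness across the origin, which is exactly what is in question. Nor can you get it cheaply from interior elliptic estimates on $B_1^*$: estimating $\|\na^s\psi\|_{L^{4/3}}$ on dyadic annuli around $0$ produces lower-order boundary/cut-off contributions of the form $\|\psi\|_{L^4(A_k)}$, and these sum in $\ell^4$ but not necessarily in the $\ell^{4/3}$ sense needed to conclude $\widetilde{\na}\psi\in L^{4/3}$ near $0$. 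The paper closes precisely this hole by first invoking Theorem \ref{weak solutions extension} from the Appendix (the growth condition \eqref{growth condition} is satisfied by \eqref{EL-phi-extrinsic}--\eqref{EL-psi-extrinsic}, so the pair extends as a weak solution of the system on all of $B_1$) and only then applying elliptic regularity for $\pd$ on full disks containing the origin to get $\psi\in W^{1,\frac{4}{3}}_{loc}(B_1)$, hence $T\in L^1(B_r)$ for $r<1$. Once you insert that extension-plus-regularity step in place of the unjustified citation, your argument coincides with the paper's proof.
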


In Section 4 we also prove that isolated singularities are removable, using a result from the Appendix and the regularity theorem in \cite{jost2016regularity}.

Finally, for a sequence of solutions $(\phi_k,\psi_k)$ with uniformly bounded energies defined on $(M,g)$ with respect to critical gravitinos $\chi_k$ which converge in $W^{1,\frac{4}{3}}$ to some smooth limit $\chi$, a subsequence can be extracted which converges weakly in $W^{1,2}\times L^4$ to a solution defined on $(M,g)$, and by a rescaling argument, known as the blow-up procedure, we can get some solutions with vanishing gravitinos, i.e. Dirac harmonic maps with curvature term, defined on the standard sphere $\sph^2$ with target manifold $(N,h)$, known as ``bubbles''. Moreover, the energies pass to the limit, i.e. the energy identities hold.

\begin{thm}\label{energy identity thm}
 \textnormal{\textbf{(Energy identities.)}}
 Let $(\phi_k,\psi_k)$ be a sequence of solutions of \eqref{EL} with respect to smooth critical gravitinos~$\chi_k$ which converge in $W^{1,\frac{4}{3}}$ to a smooth limit $\chi$, and assume their energies are uniformly bounded:
 \begin{equation}
  E(\phi_k,\psi_k)\le \Lambda<\infty.
 \end{equation}
 Then passing to a subsequence if necessary, the sequence $(\phi_k,\psi_k)$ converges weakly in the space~$W^{1,2}(M,N)\times L^4(S\otimes \R^K)$ to a smooth solution~$(\phi,\psi)$ with respect to $\chi$. Moreover, the blow-up set
 \begin{equation}
  \mathcal{S}\coloneqq \bigcap_{r>0}\left\{p\in M\Big| \liminf_{k\to \infty}\int_{B_{r}(p)}|\na\phi_k|^2+|\psi_k|^4 \dv_g\ge\vep_0\right\}
 \end{equation}
 is a finite (possibly empty) set of points $\{p_1,\dots,p_I\}$, and correspondingly a finite set (possibly empty) of Dirac-harmonic maps with curvature term $(\sigma^l_i,\xi^l_i)$ defined on $\sph^2$ with target manifold~$(N,h)$, for $l=1,\dots,L_i$ and $i=1,\dots, I$, such that the following energy identities hold:
 \begin{equation}
  \lim_{k\to\infty} E(\phi_k)=E(\phi)+\sum_{i=1}^{I} \sum_{l=1}^{L_i} E(\sigma_i^l),
 \end{equation}
 \begin{equation}
  \lim_{k\to\infty} E(\psi_k)=E(\psi)+\sum_{i=1}^{I} \sum_{l=1}^{L_i} E(\xi_i^l).
 \end{equation}
\end{thm}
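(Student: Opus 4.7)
The plan is to follow the standard blow-up framework for conformally invariant two-dimensional variational problems, modified to include the gravitino. Three ingredients are required: weak compactness with only finitely many concentration points, bubble extraction by rescaling at each such point, and a neck analysis based on the Pohozaev identity \eqref{Pohozaev identity}.

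For the first ingredient, the uniform bound on $E(\phi_k,\psi_k)$ together with the small-energy regularity established in Section~2 gives, via a standard covering argument, a finite concentration set $\mathcal S=\{p_1,\ldots,p_I\}$ and strong $C^\infty_{\mathrm{loc}}(M\setminus\mathcal S)$ convergence of a subsequence to some $(\phi,\psi)\in W^{1,2}\times L^4$. Combined with the strong convergence $\chi_k\to\chi$ in $W^{1,4/3}$, this lets us pass to the limit in \eqref{EL-phi-extrinsic}--\eqref{EL-psi-extrinsic}, so $(\phi,\psi)$ is a smooth solution on $M\setminus\mathcal S$ with respect to $\chi$; the removable singularity result from Section~4 then extends it smoothly across each $p_i$.

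For the second ingredient, at each $p_i$ I work in a conformal chart and apply the usual Schoen-type selection procedure to find scales $r_k\to 0$ and centers $x_k\to p_i$ at which an amount $\vep_0/2$ of energy concentrates at scale $r_k$. Rescaling according to the conformal weights of Lemma~\ref{conformal transformation lemma} yields new fields $(\tilde\phi_k,\tilde\psi_k)$ on larger and larger balls in $\R^2$, and a rescaled gravitino $\tilde\chi_k$ whose $L^4$-norm is preserved but whose pointwise size vanishes because $\chi$ is smooth and its rescaling carries a positive conformal weight. Applying the first step to the rescaled sequence then produces a nontrivial Dirac-harmonic map with curvature term on $\R^2$, which the removable singularity theorem extends to a bubble $(\sigma_i^l,\xi_i^l)$ on $\sph^2$. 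Iterating at the finitely many concentration points of the rescaled sequence, with termination guaranteed by the energy gap from Section~2, yields the full bubble tree.

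The main obstacle is the third ingredient, the no-neck analysis: one must rule out energy loss in the annular necks $A_k$ joining the body map to the bubbles (or successive bubbles). Decomposing each neck into dyadic sub-annuli on which the local energy is below $\vep_0$, I would integrate the Pohozaev identity \eqref{Pohozaev identity} in the radial variable. The identity equates the radial piece $\int|\p_r\phi_k|^2$ with the tangential piece $\int r^{-2}|\p_\theta\phi_k|^2$ modulo remainders controlled by $|\psi_k|^4$ (through $\Rm(\psi_k)$) and by $|\chi_k||\psi_k||\na\phi_k|$ (through $F_{11}$ and $F_{12}$). The small-energy regularity gives tangential-energy control on each sub-annulus, while the strong convergence $\chi_k\to\chi$ in $W^{1,4/3}$ and the uniform $L^4$-smallness of $\psi_k$ on the neck force the remainders to vanish in the limit. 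An analogous argument using the Dirac equation \eqref{EL-psi-extrinsic} together with standard elliptic estimates for $\pd$ handles $\int_{A_k}|\psi_k|^4$. Summing over the dyadic annuli yields the no-neck property for both $E(\phi_k)$ and $E(\psi_k)$, and combining all three steps gives the two stated energy identities.
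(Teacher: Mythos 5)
Your first two steps coincide with the paper's argument: weak compactness plus small-energy regularity and a covering argument give the finite blow-up set and strong convergence away from it, and the removable singularity theorem makes the limit a genuine smooth solution; the bubbles are produced by the same rescaling at scales chosen so that exactly $\vep_1/2$ of energy concentrates. One small caution there: the vanishing of the rescaled gravitino should not be argued through pointwise bounds on $\chi_k$ (convergence in $W^{1,4/3}$ gives no uniform $C^0$ control); the clean argument, as in the paper, is that by Lemma \ref{conformal transformation lemma} the invariant quantity $\int_{B_R(0)}|\tilde\chi_k|^4+|\widehat{\na}\tilde\chi_k|^{4/3}\dd x$ equals the same integral of $\chi_k$ over $B_{\lambda_k R}(x_k)$, which tends to zero because the balls shrink and $\chi_k\to\chi$ strongly.

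The genuine gap is in your third step. The Pohozaev identity \eqref{Pohozaev identity} only \emph{exchanges} radial and tangential energy up to the spinor and gravitino remainders; it does not by itself make either of them small on the neck, so ``integrate Pohozaev radially and let the remainders vanish'' proves nothing about $\int_{A_k}|\na\phi_k|^2$. What is missing is, first, the mechanism that actually produces smallness of the tangential energy: in the paper this is Lemma \ref{estimates of phi on annulus}, where one compares $\phi$ with the circle-average function $q$ (piecewise linear in $\log r$, harmonic), tests the equation with $q-\phi$ via Green's formula, uses $\int|\dd q-\dd\phi|^2\ge\int r^{-2}|\p_\theta\phi|^2$, and only then feeds in \eqref{angle derivative}; the smallness enters through the factor $\sup|q-\phi|\le C_*\sqrt{\vep}$ coming from the H\"older estimate \eqref{Holder estimate}, which in turn requires the claim \eqref{equally smallness} that no unit cylinder segment of the neck carries more than $\vep$ of energy --- and that claim is proved by a translation/contradiction argument producing an additional bubble (hence it is tied to the induction on the number of bubbles), not by small-energy regularity alone. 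Second, your summation over dyadic annuli does not close: the number of dyadic pieces in $A(\delta,R;k)$ is of order $\log\bigl(\delta/(\lambda_k R)\bigr)$ and tends to infinity with $k$, so per-piece smallness does not imply smallness of the total. The paper avoids this by cutting the neck cylinder into a \emph{uniformly bounded} number $\mathcal{N}\le[\Lambda/\delta]+1$ of pieces of energy at most a fixed $\delta$, and proving on each piece, via Lemma \ref{estimates of psi on annulus} for the spinor (with boundary terms controlled through \eqref{equally smallness}) and Lemma \ref{estimates of phi on annulus} for the map, a bound $C(\Lambda)\vep^{1/3}$ whose constants are independent of $\mathcal{N}$ and of the arbitrary parameter $\vep$; only then does summing give $C\mathcal{N}\vep^{1/3}\to 0$. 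Without the $q$-comparison lemma, the no-concentration claim \eqref{equally smallness}, and a uniformly bounded decomposition, your neck analysis as written would fail.
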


The proof will be given in Section 5. Although these conclusions are similar to those for harmonic maps and Dirac-harmonic maps and some of its variants in e.g. \cite{jost1991two, parker1996bubble, chen2005regularity, zhao2006energy, jost2015geometric}, one has to pay special attentions to the critical gravitinos.

\section{Small energy regularity and energy gap property}

In this section we consider the behavior of solutions with small energies.

\subsection{}

First we show the small energy regularities. Recall that for harmonic maps and Dirac-harmonic maps and its variants \cite{sacks1981existence, chen2006dirac,  branding2016energy, jost2015geometric}, it suffices to assume that the energy on a local domain is small. However, as we will see soon, here we have to assume that the gravitinos are also small. For the elliptic estimates used here, one can refer to \cite{begehr1994complex, gilbarg2001elliptic, chen2008nonlinear}, or more adapted versions in \cite{ammann2003variational}.

\begin{thm}\label{thm-small energy regularity}
 \textnormal{\textbf{($\vep_1$-Regularity theorem.)}}
 Consider the local model defined on the Euclidean unit disk $B_1\subset \R^2$ and the target manifold is a submanifold $(N,h)\hookrightarrow\R^K$ with second fundamental form $A$.  For any $p_1\in (1,\frac{4}{3})$ and $p_2\in(1,2)$ there exists an $\vep_1=\vep_1(A,p_1,p_2)\in (0,1)$ such that if the gravitino $\chi$ and a solution $(\phi,\psi)$ of \eqref{EL} satisfy
 \begin{align}
  E(\phi,\psi;B_1)=\int_{B_1}|\na\phi|^2+|\psi|^4\dd x\le \vep_1,
  & & \int_{B_1}|\chi|^4+|\widehat{\na}\chi|^{\frac{4}{3}}\dd x\le \vep_1,
 \end{align}
 then for any $U\Subset B_1$, the following estimates hold:
 \begin{equation}
  \|\phi\|_{W^{2,p_1}(U)}
  \le C\big(|A|\|\na\phi\|_{L^2(B_1)}+|A|^2\|\psi\|^2_{L^4(B_1)}+\|Q\chi\|_{W^{1,\frac{4}{3}}(B_1)}\big),
 \end{equation}
 \begin{equation}
  \|\psi\|_{W^{1,p_2}(U)}
  \le C\big(|A|\|\na\phi\|_{L^2(B_1)}+|A|^2\|\psi\|^2_{L^4(B_1)}+\|Q\chi\|_{W^{1,\frac{4}{3}}(B_1)}\big),
 \end{equation}
where $C=C(p_1, p_2,U,N)>0$.
\end{thm}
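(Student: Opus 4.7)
The plan is to derive both estimates from the extrinsic Euler--Lagrange system \eqref{EL-phi-extrinsic}--\eqref{EL-psi-extrinsic} by combining the $L^p$ theory for $\slashed{\partial}$ and $\Delta$ with H\"older's inequality on the nonlinear right-hand sides, and to use the smallness hypothesis to absorb the one critical term $|\nabla\phi|^2$ into the left-hand side. Because the two equations are coupled through $\nabla\phi$ and $\psi$, a short finite bootstrap across both of them is needed to reach every $p_1\in(1,\tfrac{4}{3})$ and $p_2\in(1,2)$. I fix concentric discs $U\Subset B_{r_2}\Subset B_{r_1}\Subset B_1$ and a cut-off $\eta\in C^\infty_c(B_{r_1})$ with $\eta\equiv 1$ on $B_{r_2}$; all estimates below are applied to $\eta\psi$ and $\eta\phi$, and the cut-off commutator terms are controlled by $\|\nabla\phi\|_{L^2(B_1)}$ and $\|\psi\|_{L^4(B_1)}$.

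For the spinor equation I would write $\slashed{\partial}(\eta\psi)=\eta\slashed{\partial}\psi+\gamma(\nabla\eta)\psi$ and apply the $L^{p_2}$ estimate for the spin Dirac operator. At the base exponent $p_2=\tfrac{4}{3}$, the four nonlinear terms on the right-hand side of \eqref{EL-psi-extrinsic} are controlled by H\"older using only the given integrabilities $\nabla\phi\in L^2$, $\psi\in L^4$, $Q\chi\in L^4$:
\begin{equation*}
\|A\nabla\phi\cdot\psi\|_{L^{4/3}}\le |A|\|\nabla\phi\|_{L^2}\|\psi\|_{L^4},\quad \||Q\chi|^2\psi\|_{L^{4/3}}\le \|Q\chi\|_{L^4}^2\|\psi\|_{L^4},
\end{equation*}
\begin{equation*}
\|A^2\psi^3\|_{L^{4/3}}\le |A|^2\|\psi\|_{L^4}^3,\quad \|\nabla\phi\cdot\chi\|_{L^{4/3}}\le \|\nabla\phi\|_{L^2}\|Q\chi\|_{L^4}.
\end{equation*}
The smallness of $\|\psi\|_{L^4}$ and $\|Q\chi\|_{L^4}$ from the hypothesis lets any $\psi$-linear contribution that appears on the right also be absorbed into the left. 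This produces the claim at $p_2=\tfrac{4}{3}$; higher $p_2\in(1,2)$ will be reached by using the improved integrability of $\nabla\phi$ obtained below.

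For the map equation I would apply the Calder\'on--Zygmund inequality $\|\eta\phi\|_{W^{2,p_1}}\le C\|\Delta(\eta\phi)\|_{L^{p_1}}$ to \eqref{EL-phi-extrinsic}. The only delicate term is $A^i_{jk}\langle\nabla\phi^j,\nabla\phi^k\rangle$, which a priori lies only in $L^1$. I would handle it by the two-dimensional interpolation trick: Sobolev gives $W^{2,p_1}\hookrightarrow W^{1,2p_1/(2-p_1)}$, and interpolating between $L^2$ and $L^{2p_1/(2-p_1)}$ with exponent $\theta=\tfrac{1}{2}$ yields
\begin{equation*}
\||\nabla\phi|^2\|_{L^{p_1}}=\|\nabla\phi\|_{L^{2p_1}}^2\le C\|\nabla\phi\|_{L^2}\,\|\phi\|_{W^{2,p_1}}.
\end{equation*}
Its contribution is therefore bounded by $C|A|\sqrt{\varepsilon_1}\,\|\phi\|_{W^{2,p_1}}$ and, for $\varepsilon_1$ sufficiently small, can be moved to the left-hand side. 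The cubic $A^2\langle\psi,\nabla\phi\cdot\psi\rangle$, the quartic $Z\langle\psi,\psi\rangle^2$, and the lower-order term $A\langle V,\nabla\phi\rangle$ are then bounded directly by the target expressions via H\"older with $\nabla\phi\in L^2$ and $\psi\in L^4$.

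The remaining source $\operatorname{div}V^i$, with $V^i=\langle e_\alpha\cdot e_\beta\cdot\chi^\alpha,\psi^i\rangle e_\beta$, is where the coupling with the Dirac equation closes: after one round of the spinor estimate one has $\psi\in W^{1,p_2}$, hence by Sobolev $\psi\in L^q$ for $q$ strictly larger than $4$ as soon as $p_2>\tfrac{4}{3}$, and H\"older then gives $\|\nabla V\|_{L^{p_1}}\lesssim \|\nabla\chi\|_{L^{4/3}}\|\psi\|_{L^q}+\|\chi\|_{L^4}\|\nabla\psi\|_{L^{p_2}}$, which is controlled by $\|Q\chi\|_{W^{1,4/3}}$ together with the newly gained spinor norm. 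A short finite iteration between the two estimates then sweeps out the whole range $p_1\in(1,\tfrac{4}{3})$, $p_2\in(1,2)$. The one genuinely critical step is the interpolation-absorption argument for $|\nabla\phi|^2$; this is the unique essential use of the small-energy hypothesis, and everything else is routine H\"older--Sobolev bookkeeping combined with the $L^p$ theory for $\slashed{\partial}$ and $\Delta$ (as found, e.g., in \cite{ammann2003variational}).
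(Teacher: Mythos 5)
Your proposal contains the right basic ingredients (cutoff, $L^p$ elliptic theory, absorption of the critical quadratic term via smallness), but the bootstrap as you order it does not get off the ground, and two of your ``direct H\"older'' claims are false. First, the sequential scheme is circular. Your first spinor round at $p_2=\tfrac{4}{3}$ gives $\psi\in W^{1,4/3}_{loc}\hookrightarrow L^4$, which is no gain over the hypothesis. To go beyond $p_2=\tfrac{4}{3}$ you must improve the source term $e_\al\cdot\na\phi^i\cdot\chi^\al$, and since $\chi$ is only in $L^4$ this forces $\na\phi\in L^q$ with $q>2$; but to improve $\na\phi$ beyond $L^2$ you need the map estimate, and there the terms $\diverg V$ (with $V\sim\langle\chi,\psi\rangle$ only in $L^2$, $\na V$ only in $L^1$) and the quartic $Z$-term $\sim|\psi|^4\in L^1$ cannot be placed in $L^{p_1}$, $p_1>1$, nor absorbed, until $\psi$ has integrability strictly above $L^4$ resp. $W^{1,4/3}$. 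So each improvement waits for the other and the iteration stalls at the base regularity; your phrase ``as soon as $p_2>\tfrac{4}{3}$'' presupposes exactly what is not yet available. Second, the claim that $A^2\langle\psi,\na\phi\cdot\psi\rangle$, the quartic term and $A\langle V,\na\phi\rangle$ are ``bounded directly by the target expressions via H\"older with $\na\phi\in L^2$ and $\psi\in L^4$'' fails: these products lie merely in $L^1$, not in $L^{p_1}$ for any $p_1>1$ (e.g.\ $\||\psi|^2|\na\phi|\|_{L^{p_1}}$ cannot be controlled by $\|\psi\|_{L^4}^2\|\na\phi\|_{L^2}$).

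The way the paper closes the loop is not a sequential bootstrap but a simultaneous, matched-exponent closure. For a single $p\in(1,2)$ one writes the Dirac estimate keeping the coupling term $\|Q\chi\|_{L^4}\|\na(\eta\phi)\|_{L^{4p/(4-p)}}$ explicitly on the right, and the map estimate at the level $W^{1,4p/(4-p)}$, splitting $\eta\phi=u+v$ where $\Delta u=-\diverg(\eta V)$ is treated by first-order (divergence-form) theory with $\|u\|_{W^{1,4p/(4-p)}}\lesssim\|Q\chi\|_{L^4}\|\eta\psi\|_{L^{2p/(2-p)}}$, while all remaining terms of $\Delta v$ are put in $L^{4p/(4+p)}$ using the \emph{high} norms $\|\na(\eta\phi)\|_{L^{4p/(4-p)}}$ and $\|\eta\psi\|_{L^{2p/(2-p)}}$ multiplied by small coefficients such as $|A|\|\na\phi\|_{L^2}$, $|A|^2\|\psi\|_{L^4}^2$, $|A||\na A|\|\psi\|_{L^4}^3$. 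Adding the two inequalities and invoking \eqref{constraints for vep_1} absorbs every coupled high norm at once, giving \eqref{first improvement}, i.e.\ $\na\phi$ almost in $L^4$ and $\psi$ almost in $W^{1,2}$; only then, in a second round, is $\diverg(\eta V)\in L^{p_1}$ (e.g.\ $\|\diverg(\eta V)\|_{L^{8/7}}\lesssim\|\psi\|_{W^{1,8/5}}\|Q\chi\|_{W^{1,4/3}}$) and the full $W^{2,p_1}$ estimate obtained. Your interpolation treatment of $|\na\phi|^2$ is a fine variant of the paper's H\"older split, and your $\na V$ bound is correct once the improved norms exist, but without the simultaneous closure (or an equivalent device) the argument as written does not reach them.
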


\begin{remark}
 Note that if the second fundamental form $A$ vanishes identically, then $N$ is a totally geodesic submanifold of the Euclidean space $\R^K$, hence there is no curvatures on $N$ and the model then is  easy and not of interest. So we will assume that $A\neq 0$, and without loss of generality, we assume $|A|\equiv \|A\|\ge 1$. For some $C(p)$ depending on the value of $p$ to be chosen later, the small barrier constant $\vep_1$ will be required to satisfy
 \begin{align}\label{constraints for vep_1}
  C(p)|A|^2\sqrt{\vep_1}\le \frac{1}{8}, & & C(p)|A| |\na A| \vep^{3/4}\le\frac{1}{8},
 \end{align}
 where $|\na A|\equiv\|\na A\|$. These restrictions will be explained in the proof.
\end{remark}
\begin{remark}
 Note also that since the domain is the Euclidean disk $B_1$, the connection $\widehat{\na}$ is actually equivalent to $\na^s$.
\end{remark}

\begin{proof}[Proof of Theorem \ref{thm-small energy regularity}]
Since $N$ is taken as a compact submanifold of $\R^K$, we may assume that it is contained in a ball of radius $C_N$ in $\R^K$, which implies $\|\phi\|_{L^\infty}\le C_N$. Moreover, as we are dealing with a local solution $(\phi,\psi)$, we may assume that $\int_{B_1}\phi\dd x=0$, so that the Poincar\'e inequalities hold: for any $p\in [1,\infty]$,
\begin{equation}
 \|\phi\|_{L^p(B_1)}\le C_p\|\na\phi\|_{L^p(B_1)}.
\end{equation}
Let $(U_k)_{k\ge 1}$ be a sequence of nonempty disks such that
\begin{equation}
 B_1\Supset U_1\Supset U_2\Supset U_3\Supset \cdots.
\end{equation}

Take a smooth cutoff function $\eta\colon B_1\to \R$ such that $0\le \eta\le 1$, $\eta|_{U_1}\equiv 1$, and $\supp\eta\subset B_1$. Then $\eta\psi$ satisfies
 \begin{equation}\label{cutoff equation for psi}
  \begin{split}
   \pd(\eta\psi^i)
   =&\na\eta\cdot\psi^i+\eta\pd\psi^i \\
   =&\na\eta\cdot\psi^i-A^i_{jk}\na\phi^j\cdot(\eta\psi^k)+|Q\chi|^2(\eta\psi^i) \\
    &+\frac{1}{3}A^i_{jm}A^m_{kl}\left(\langle\psi^k,\psi^l\rangle(\eta\psi^j)
                                            -\langle\psi^j,\psi^k\rangle(\eta\psi^l)\right) \\
    &   -e_\al\cdot\na(\eta\phi^i)\cdot\chi^\al+e_\al\cdot\phi^i\na\eta\cdot\chi^\al.
  \end{split}
 \end{equation}
Then one has
\begin{equation}\label{estimating psi}
 \begin{split}
 |\pd(\eta\psi)|\le
   &|\na\eta||\psi|+|A||\na\phi|\cdot|\eta\psi|+|Q\chi|^2|\eta\psi|+|A|^2|\psi|^2\cdot|\eta\psi| \\
   &+|Q\chi||\na(\eta\phi)|+|\phi||\na\eta||Q\chi|.
 \end{split}
\end{equation}
 Consider the $L^{p}$-norm (where $p\in (1,2)$) of the left hand side:
 \begin{equation}
  \begin{split}
   \|\pd(\eta\psi)\|_{L^{p}(B_1)}
   \le& \|\na\eta\|_{L^{\frac{4p}{4-p}}(B_1)}\|\psi\|_{L^4(B_1)}
          +|A|\|\na\phi\|_{L^2(B_1)}\|\eta\psi\|_{L^\frac{2p}{2-p}(B_1)}\\
      &    +\|Q\chi\|^2_{L^4(B_1)}\|\eta\psi\|_{L^{\frac{2p}{2-p}}(B_1)}
        +|A|^2\|\psi\|^2_{L^4(B_1)} \|\eta\psi\|_{L^{\frac{2p}{2-p}}(B_1)}\\
      &   +\|Q\chi\|_{L^4(B_1)}\|\na(\eta\phi)\|_{L^{\frac{4p}{4-p}}(B_1)}
         +C_N\|\na\eta\|_{L^{\frac{4p}{4-p}}(B_1)}\|Q\chi\|_{L^4(B_1)}.
  \end{split}
 \end{equation}
 Assume that $\|\na\eta\|_{L^{\frac{4p}{4-p}}(B_1)}$ is bounded by some constant $C'=C'(U_1,p)$.
 Since $\eta\psi$ vanishes on the boundary and $\pd$ is an elliptic operator of order one, we have
 \begin{equation}
  \|\eta\psi\|_{L^{\frac{2p}{2-p}}(B_1)}\le C(p)\|\na^s(\eta\psi)\|_{L^p(B_1)}
  \le C(p)\|\pd(\eta\psi)\|_{L^p(B_1)}.
 \end{equation}
 Then from
 \begin{equation}
  \begin{split}
   \|\pd(\eta\psi)\|_{L^p(B_1)}
  \le& \left(|A|\|\na\phi\|_{L^2(B_1)}+\|Q\chi\|^2_{L^4(B_1)}+|A|^2\|\psi\|^2_{L^4(B_1)}\right)
                      \|\eta\psi\|_{L^{\frac{2p}{2-p}}(B_1)} \\
     & +\|Q\chi\|_{L^4(B_1)}\|\na(\eta\phi)\|_{L^{\frac{4p}{4-p}}(B_1)}
       +C_N C'\left(\|\psi\|_{L^4(B_1)} +\|Q\chi\|_{L^4(B_1)}\right)
  \end{split}
 \end{equation}
 together with the fact
 \begin{equation}
  |\widetilde{\na}(\eta\psi)|\le |\na^s(\eta\psi)|+|A||\eta\psi||\na\phi|,
 \end{equation}
 it follows that
 \begin{equation}\label{first estimate for psi}
  \begin{split}
     \|\widetilde{\na}(\eta\psi)\|_{L^p(B_1)}
  \le 2C(p)\left(\|Q\chi\|_{L^4(B_1)}\|\na(\eta\phi)\|_{L^{\frac{4p}{4-p}}(B_1)}
       +C_N C'\left(\|\psi\|_{L^4(B_1)} +\|Q\chi\|_{L^4(B_1)}\right)\right)
  \end{split}
 \end{equation}
 provided that \eqref{constraints for vep_1} is satisfied.

 Now consider the map $\phi$. The equations for $\eta\phi$ are
 \begin{equation}
  \begin{split}
   \Delta(\eta\phi^i)
   =&\eta\Delta\phi^i+2\langle\na\eta,\na\phi^i\rangle+(\Delta\eta)\phi^i \\
   =&\eta\big(A^i_{jk}\langle\na\phi^j,\na\phi^k\rangle+A^i_{jm}A^m_{kl}\langle\psi^j,\na\phi^k\cdot\psi^l\rangle \\
    &    +Z^i(A,\na A)_{jklm} \langle\psi^j,\psi^l\rangle\langle\psi^k,\psi^m\rangle \\
    & -\diverg V^i-A^i_{jk}\langle V^j,\na\phi^k\rangle\big)+2\langle\na\eta,\na\phi^i\rangle+(\Delta\eta)\phi^i.
  \end{split}
 \end{equation}
 Using $\eta\na\phi^i=\na(\eta\phi^i)-\phi^i(\na\eta)$, we can rewrite it as
 \begin{equation}\label{cutoff equation for phi}
  \begin{split}
   \Delta(\eta\phi^i)
   =&A^i_{jk}\langle\na\phi^j,\na(\eta\phi^k)\rangle+A^i_{jm}A^m_{kl}\langle\psi^j,\na(\eta\phi^k)\cdot\psi^l\rangle \\
    &  +Z^i(A,\na A)_{jklm} \langle\psi^j,\psi^l\rangle\langle\psi^k,\eta\psi^m\rangle \\
    &-\diverg(\eta V^i)-A^i_{jk}\langle V^j,\na(\eta\phi^k)\rangle+2\langle\na\eta,\na\phi^i\rangle+(\Delta\eta)\phi^i \\
    &-A^i_{jk}\langle\na\phi^j,\phi^k\na\eta\rangle-A^i_{jm}A^m_{kl}\langle\psi^j,\phi^k\na\eta\cdot\psi^l\rangle
      +\langle\na\eta,V^i\rangle
      +A^i_{jk}\langle V^j,\phi^k\na\eta\rangle.
  \end{split}
 \end{equation}
 Notice that $\eta\phi^i\in C^\infty_0(B_1)$. Split it as $\eta\phi^i=u^i+v^i$, where $u^i\in C^\infty_0(B_1)$ uniquely solves (see e.g. \cite[Chap. 8]{chen1998elliptic})
 \begin{equation}
  \Delta u^i=-\diverg(\eta V^i).
 \end{equation}
 Since $\eta V^i\in L^{\frac{4p}{4-p}}(B_1)$, it follows from the $L^p$ theory of Laplacian operators that
 \begin{equation}\label{first estimate for u}
  \|u\|_{W^{1,\frac{4p}{4-p}}(B_1)}\le C(p)\|Q\chi\|_{L^4(B_1)}\|\eta\psi\|_{L^{\frac{2p}{2-p}}(B_1)}.
 \end{equation}
 Then $v^i\in C^\infty_0(B_1)$ satisfies
 \begin{equation}
  \begin{split}
   \Delta v^i
   =&\Delta(\eta\phi^i)-\Delta u^i \\
   =&A^i_{jk}\langle\na\phi^j,\na(\eta\phi^k)\rangle+A^i_{jm}A^m_{kl}\langle\psi^j,\na(\eta\phi^k)\cdot\psi^l\rangle \\
    &  +Z^i(A,\na A)_{jklm} \langle\psi^j,\psi^l\rangle\langle\psi^k,\eta\psi^m\rangle
       -A^i_{jk}\langle V^j,\na(\eta\phi^k)\rangle+2\langle\na\eta,\na\phi^i\rangle+(\Delta\eta)\phi^i \\
    &-A^i_{jk}\langle\na\phi^j,\phi^k\na\eta\rangle-A^i_{jm}A^m_{kl}\langle\psi^j,\phi^k\na\eta\cdot\psi^l\rangle
      +\langle\na\eta,V^i\rangle +A^i_{jk}\langle V^j,\phi^k\na\eta\rangle.
  \end{split}
 \end{equation}
 From \cite{jost2016regularity}, $\|Z(A,\na A)\|\le |A||\na A|$. Thus the $L^{\frac{4p}{4+p}}$ norm of $\Delta v$ can thus be estimated by
 \begin{equation}
  \begin{split}
   \|\Delta v\|_{L^{\frac{4p}{4+p}}(B_1)}
   &\le |A|\|\na\phi\|_{L^2(B_1)}\|\na(\eta\phi)\|_{L^{\frac{4p}{4-p}}(B_1)}
        +|A|^2\|\psi\|^2_{L^4(B_1)}\|\na(\eta\phi)\|_{L^{\frac{4p}{4-p}}(B_1)} \\
   +&|A||\na A|\|\psi\|^3_{L^4(B_1)}\|\eta\psi\|_{L^{\frac{2p}{2-p}}(B_1)}
     +|A|\|Q\chi\|_{L^4(B_1)}\|\psi\|_{L^4(B_1)}\|\na(\eta\phi)\|_{L^{\frac{4p}{4-p}}(B_1)} \\
   +&2\|\na\eta\|_{L^{\frac{4p}{4-p}}(B_1)}\|\na\phi\|_{L^2(B_1)}
     +\|\Delta\eta\|_{L^{\frac{4p}{4-p}}(B_1)}\|\phi\|_{L^2(B_1)}\\
   +&|A|\|\na\phi\|_{L^2(B_1)}\|\phi\na\eta\|_{L^{\frac{4p}{4-p}}(B_1)}
      +|A|^2\|\psi\|^2_{L^4(B_1)}\|\phi\na\eta\|_{L^{\frac{4p}{4-p}}(B_1)} \\
   +&\|\na\eta\|_{L^{\frac{4p}{4-p}}(B_1)}\|Q\chi\|_{L^4(B_1)}\|\psi\|_{L^4(B_1)}
     +|A|\|\phi\na\eta\|_{L^{\frac{4p}{4-p}}(B_1)}\|Q\chi\|_{L^4(B_1)}\|\psi\|_{L^4(B_1)}.
  \end{split}
 \end{equation}
 As before assume that $\|\na\eta\|_{L^{\frac{4p}{4-p}}(B_1)}$ and $\|\Delta\eta\|_{L^{\frac{4p}{4-p}}(B_1)}$ are bounded by $C'=C'(U_1,p)$. Collecting the terms, we get
 \begin{equation}
  \begin{split}
   \|\Delta v\|_{L^{\frac{4p}{4+p}}(B_1)}
  \le& \big(|A|\|\na\phi\|_{L^2(B_1)}+|A|^2\|\psi\|^2_{L^2(B_1)}+|A|\|Q\chi\|_{L^4(B_1)}\|\psi\|_{L^4(B_1)}\big)
                  \|\na(\eta\phi)\|_{L^{\frac{4p}{4-p}}(B_1)} \\
   & +|A||\na A|\|\psi\|^3_{L^4(B_1)}\|\eta\psi\|_{L^{\frac{2p}{2-p}}(B_1)}  \\
   & +C' C_N \big(2\|\na\phi\|_{L^2(B_1)}+\|\phi\|_{L^2(B_1)}+|A|\|\na\phi\|_{L^2(B_1)}+|A|^2\|\psi\|^2_{L^4(B_1)} \\
   &\qquad\qquad\qquad +\|Q\chi\|_{L^4(B_1)}\|\psi\|_{L^4(B_1)}+|A|\|Q\chi\|_{L^4(B_1)}\|\psi\|_{L^4(B_1)}\big).
  \end{split}
 \end{equation}
 By Sobolev embedding,
 \begin{equation}\label{first estimate for v}
  \begin{split}
   \|v\|_{W^{1,\frac{4p}{4-p}}(B_1)}
   &\le C(p)\big(|A|\|\na\phi\|_{L^2(B_1)}+|A|^2\|\psi\|^2_{L^4(B_1)}+\|Q\chi\|^2_{L^4(B_1)}\big)
                 \|\na(\eta\phi)\|_{L^{\frac{4p}{4-p}}(B_1)} \\
     & +C(p)|A||\na A|\|\psi\|^3_{L^4(B_1)}\|\eta\psi\|_{L^{\frac{2p}{2-p}}(B_1)}  \\
     & + 4 C(p) C' C_N\big(|A|\|\na\phi\|_{L^2(B_1)}+|A|^2\|\psi\|^2_{L^4(B_1)}+\|Q\chi\|^2_{L^4(B_1)}\big).
  \end{split}
 \end{equation}
 Since $\eta\phi=u+v$, combining \eqref{first estimate for u} and \eqref{first estimate for v}, we get
 \begin{equation}\label{first estimate for phi}
  \begin{split}
   \|\eta\phi\|_{W^{1,\frac{4p}{4-p}}(B_1)}
  &\le C(p)\big(|A|\|\na\phi\|_{L^2(B_1)}+|A|^2\|\psi\|^2_{L^4(B_1)}+\|Q\chi\|^2_{L^4(B_1)}\big)
                 \|\na(\eta\phi)\|_{L^{\frac{4p}{4-p}}(B_1)} \\
     +&C(p)|A||\na A|\|\psi\|^3_{L^4(B_1)}\|\eta\psi\|_{L^{\frac{2p}{2-p}}(B_1)}
         +C(p)\|Q\chi\|_{L^4(B_1)}\|\eta\psi\|_{L^{\frac{2p}{2-p}}(B_1)}\\
     +&4 C(p) C' C_N\big(|A|\|\na\phi\|_{L^2(B_1)}+|A|^2\|\psi\|^2_{L^4(B_1)}+\|Q\chi\|^2_{L^4(B_1)}\big).
  \end{split}
 \end{equation}
 By the small energy assumption, and Sobolev embedding, this implies that
 \begin{equation}
  \begin{split}
   \|\eta\phi\|_{W^{1,\frac{4p}{4-p}}(B_1)}
  \le &2C(p)\big(|A||\na A|\|\psi\|^3_{L^4(B_1)}+\|Q\chi\|_{L^4(B_1)}\big)\|\eta\psi\|_{W^{1,p}(B_1)}\\
     & + 8 C(p) C' C_N\big(|A|\|\na\phi\|_{L^2(B_1)}+|A|^2\|\psi\|^2_{L^4(B_1)}+\|Q\chi\|^2_{L^4(B_1)}\big).
  \end{split}
 \end{equation}
 The estimates \eqref{first estimate for psi} and \eqref{first estimate for phi}, together with the small energy assumption, imply that for any $p\in (1,2)$,
 \begin{equation}\label{first improvement}
  \|\eta\phi\|_{W^{1,\frac{4p}{4-p}}(B_1)}+\|\eta\psi\|_{W^{1,p}(B_1)}
  \le C(p,\eta,N)\big(|A|\|\na\phi\|_{L^2(B_1)}+|A|^2\|\psi\|^2_{L^4(B_1)}+\|Q\chi\|^2_{L^4(B_1)}\big).
 \end{equation}
 Note that as $p\nearrow 2$, $\frac{4p}{4-p}\nearrow 4$. Thus, $\eta\phi$ is almost a $W^{1,4}$ map and $\eta\psi$ is almost a $W^{1,2}$ vector spinor.

 Now $\chi\in W^{1,\frac{4}{3}}$, thus in the equations for the map $\phi$, the divergence terms can be reconsidered. Take another cutoff function, still denoted by $\eta$, such that $0\le \eta\le 1$, $\eta|_{U_2}\equiv 1$, and $\supp\eta\subset U_1$. Then $\eta\phi$ satisfies equations of the same form as \eqref{cutoff equation for phi}, and $\diverg(\eta V^i)\in L^p(B_1)$ for any $p\in[1,\frac{4}{3})$. For example, we take $p=\frac{8}{7}$, then
 \begin{equation}
  \|\diverg(\eta V^i)\|_{L^{\frac{8}{7}}(B_1)}
   \le C(\eta)\|\psi\|_{W^{1,\frac{8}{5}}(U_1)}\|Q\chi\|_{W^{1,\frac{4}{3}}(B_1)},
 \end{equation}
 and note that $\|\psi\|_{W^{1,\frac{8}{5}}(U_1)}$ is under control by \eqref{first improvement}. Recalling \eqref{cutoff equation for phi} we have the estimate
 \begin{equation}
  \begin{split}
   \|\Delta(\eta\phi)\|_{L^{\frac{8}{7}}(B_1)}
   &\le |A|\|\na\phi\|_{L^2(B_1)}\|\na(\eta\phi)\|_{L^{\frac{8}{3}}(B_1)}
      +|A|^2\|\psi\|^2_{L^4(B_1)}\|\na(\eta\phi)\|_{L^{\frac{8}{3}}(B_1)} \\
  +&|A||\na A|\|\psi\|^3_{L^4(B_1)}\|\eta\psi\|_{L^8(B_1)}
      +|A|\|Q\chi\|_{L^4(B_1)}\|\psi\|_{L^4(B_1)}\|\na(\eta\phi)\|_{L^{\frac{8}{3}}(B_1)} \\
  +&\|\diverg(\eta V)\|_{L^{\frac{8}{7}}(B_1)}+2\|\na\eta\|_{L^{\frac{8}{3}}(B_1)}\|\na\phi\|_{L^2(B_1)}
     +\|\Delta\eta\|_{L^{\frac{8}{3}}(B_1)}\|\phi\|_{L^2(B_1)} \\
  +&|A|\|\na\phi\|_{L^2(B_1)}\|\phi\na\eta\|_{L^{\frac{8}{3}}(B_1)}
      +|A|^2\|\psi\|^2_{L^4(B_1)}\|\phi\na\eta\|_{L^{\frac{8}{3}}(B_1)} \\
  +&\|\na\eta\|_{L^{\frac{8}{3}}(B_1)}\|Q\chi\|_{L^4(B_1)}\|\psi\|_{L^4(B_1)}
      +|A|\|\phi\na\eta\|_{L^{\frac{8}{3}}(B_1)}\|Q\chi\|_{L^4(B_1)}\|\psi\|_{L^4(B_1)}.
  \end{split}
 \end{equation}
 As before we assume $\|\na\eta\|_{L^{\frac{8}{3}}(B_1)}$ and $\|\Delta\eta\|_{L^{\frac{8}{3}}(B_1)}$ are bounded by $C''=C''(U_2,U_1)$. Then
 \begin{equation}
  \begin{split}
   \|\Delta(\eta\phi)\|_{L^{\frac{8}{7}}(B_1)}
   \le& \left(|A|\|\na\phi\|_{L^2(B_1)}+|A|^2\|\psi\|^2_{L^4(B_1)}+\|Q\chi\|^2_{L^4(B_1)}\right)
              \|\na(\eta\phi)\|_{L^{\frac{8}{3}}(B_1)}  \\
   &+|A||\na A|\|\psi\|^3_{L^4(B_1)}\|\eta\psi\|_{L^8(B_1)}
     +C(\eta)\|\psi\|_{W^{1,\frac{8}{5}}(U_1)}\|Q\chi\|_{W^{1,\frac{4}{3}}(B_1)} \\
   &+4 C_N C''\left(|A|\|\na\phi\|_{L^2(B_1)}+|A|^2\|\psi\|^2_{L^4(B_1)}+\|Q\chi\|^2_{L^4(B_1)}\right).
  \end{split}
 \end{equation}
 By the smallness assumptions and the $L^p$ theory for Laplacian operator (here $p=\frac{8}{7}$) we get
 \begin{equation}
  \begin{split}
   \|\eta\phi\|_{W^{2,\frac{8}{7}}(B_1)}
   \le & C(p,U_2,N)\left(|A|\|\na\phi\|_{L^2(B_1)}+|A|^2\|\psi\|^2_{L^4(B_1)}+\|Q\chi\|^2_{W^{1,\frac{4}{3}}(B_1)}\right).
  \end{split}
 \end{equation}
  One can check that similar estimates hold for $\|\eta\phi\|_{W^{1,p}(B_1)}$ for any $p\in(1,\frac{4}{3})$. This accomplishes the proof.

\end{proof}

Recall the Sobolev embeddings
\begin{equation}
 W^{2,\frac{8}{7}}_0(B_1)\hookrightarrow W^{1,\frac{8}{3}}_0(B_1)\hookrightarrow C^{1/4}_0(B_1).
\end{equation}
Thus we see that the map $\phi$ is H\"older continuous with
\begin{equation}
 \|\eta\phi\|_{C^{1/4}(B_1)}\le C\|\eta\phi\|_{W^{2,\frac{8}{7}}(B_1)}.
\end{equation}
In particular, when the energies of $(\phi,\psi)$ and certain norms of the gravitino are small, say smaller than $\vep$ (where $\vep\le \vep_1$), the $\frac{1}{4}$-H\"older norm of the map in the interior is also small, with the estimate
\begin{equation}\label{Holder estimate}
 \|\phi\|_{C^{1/4}(U)}\le C(N,U, |A|)\sqrt{\vep}.
\end{equation}


\subsection{}
In this subsection we show the existence of energy gaps. For harmonic maps, this is a well known property. On certain closed surfaces the energy gaps are known to exist for Dirac-harmonic maps (with or without curvature term), and using a similar method here we get the following version with gravitinos, compare with \cite[Theorem 3.1]{chen2005regularity}, \cite[Lemma 4.1]{chen2008nonlinear}, \cite[Lemma 4.8, Lemma 4.9]{branding2015some}  and \cite[Proposition 5.2]{ jost2015geometric}.

\begin{prop}[restate=Gap, label=energy gaps]
 \textnormal{\textbf{(Energy gap property.)}}
 Suppose that $(\phi,\psi)$ is a solution to \eqref{EL} defined on an oriented closed surface $(M,g)$ with target manifold $(N,h)$. Suppose that the spinor bundle $S\to (M,g)$ doesn't admit any nontrivial harmonic spinors. Then there exists an $\vep_0=\vep_0(M,g,A)\in (0,1)$ such that if
 \begin{equation}\label{ep-0 assumption}
  E(\phi,\psi)+\|Q\chi\|_{W^{1,\frac{4}{3}}(M)}\le \vep_0,
 \end{equation}
 then $(\phi,\psi)$ has to be a trivial solution.
\end{prop}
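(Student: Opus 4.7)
The plan is to combine the small-energy regularity of Theorem~\ref{thm-small energy regularity} with the elliptic invertibility of the untwisted Dirac operator $\pd$ (granted by the absence of harmonic spinors on $S$) to force both $\na\phi$ and $\psi$ to vanish. Throughout we work in the extrinsic formulation afforded by the isometric embedding $(N,h)\hookrightarrow\R^K$. First, cover the closed surface $M$ by finitely many geodesic disks on each of which Theorem~\ref{thm-small energy regularity} applies; since $\vep_0\le\vep_1$ the smallness hypotheses hold everywhere, and patching the local estimates yields global bounds $\phi\in C^{0,1/4}(M)\cap W^{2,p_1}(M)$ and $\psi\in W^{1,p_2}(M)\cap L^4(M)$. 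In particular $\osc_M\phi\le C\sqrt{\vep_0}$ for a constant $C$ depending only on $(M,g,N,|A|)$.

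A direct Clifford-algebra computation in a local orthonormal frame gives
\begin{equation*}
 e_\al\cdot\na\phi^i\cdot\chi^\al=-2(Q\chi)^\al\na_{e_\al}\phi^i,\qquad V^i=-2\langle (Q\chi)^\al,\psi^i\rangle e_\al,
\end{equation*}
so that the right-hand sides of \eqref{EL-phi-extrinsic}--\eqref{EL-psi-extrinsic} depend on $\chi$ only through the $L^4$-small quantity $Q\chi$. Because $S$ admits no nontrivial harmonic spinor, $\pd\colon W^{1,q}(S)\to L^q(S)$ is an isomorphism for every $q\in(1,\infty)$; componentwise this gives $\|\psi\|_{W^{1,q}}\le C\|\pd\psi\|_{L^q}$ on $S\otimes\R^K$. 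Taking $q=\tfrac{4}{3}$, combining with the Sobolev embedding $W^{1,4/3}(M)\hookrightarrow L^4(M)$, and inserting \eqref{EL-psi-extrinsic} with H\"older, we obtain
\begin{equation*}
 \|\psi\|_{L^4}\le C\bigl(|A|\|\na\phi\|_{L^2}+\|Q\chi\|_{L^4}^2+|A|^2\|\psi\|_{L^4}^2\bigr)\|\psi\|_{L^4}+C\|Q\chi\|_{L^4}\|\na\phi\|_{L^2}.
\end{equation*}
The parenthesised factor is $O(\sqrt{\vep_0})$, so for $\vep_0$ small it can be absorbed on the left, leaving the crucial bound $\|\psi\|_{L^4(M)}\le C\|Q\chi\|_{L^4(M)}\|\na\phi\|_{L^2(M)}$.

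To close the loop, let $\bar\phi\coloneqq\frac{1}{|M|}\int_M\phi\dv_g\in\R^K$. Since $\int_M\Delta\phi\dv_g=0$ on the closed surface, $\|\na\phi\|_{L^2}^2=-\int_M(\phi-\bar\phi)\cdot\Delta\phi\dv_g$. Substituting \eqref{EL-phi-extrinsic}: the quadratic, quartic and mixed pointwise terms are bounded by pulling out $\|\phi-\bar\phi\|_{L^\infty}\le C\sqrt{\vep_0}$ and estimating the remainder in $L^1$ via H\"older; the divergence contribution is integrated by parts once more, producing $\int\na\phi\cdot V$ with $\|V\|_{L^2}\le 2\|Q\chi\|_{L^4}\|\psi\|_{L^4}$. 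Substituting the bound from the previous step into every occurrence of $\|\psi\|_{L^4}$, each resulting term takes the form (small constant)$\cdot\|\na\phi\|_{L^2}^2$ with smallness proportional to a positive power of $\vep_0$. Choosing $\vep_0$ small enough in terms of $(M,g,|A|,|\na A|)$ yields $\|\na\phi\|_{L^2}=0$, so $\phi$ is constant on $M$, and the spinor bound then forces $\psi\equiv 0$.

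The main obstacle is the divergence term $\diverg V^i$ in \eqref{EL-phi-extrinsic}: the direct $L^1$ bound $\|\diverg V\|_{L^1}\lesssim\|\na\chi\|_{L^{4/3}}\|\psi\|_{L^4}+\|\chi\|_{L^4}\|\na\psi\|_{L^{4/3}}$ is not small enough to absorb the resulting $\|\na\phi\|_{L^2}^2$ against $\|\phi-\bar\phi\|_{L^\infty}$. The remedy is the additional integration by parts that moves the derivative off $V^i$, trading one power of $\phi-\bar\phi$ for a factor of $\|Q\chi\|_{L^4}\|\psi\|_{L^4}$; after combining with the spinor estimate this becomes small of order $\vep_0^2$, which is safely absorbable.
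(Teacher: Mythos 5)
Your argument is correct, but it takes a partly different route from the paper's. The paper's proof never invokes the $\vep$-regularity theorem: it works purely with global elliptic estimates, namely the invertibility of $\pd$ at the $L^{8/5}$ level (which leaves the weaker bound \eqref{control psi in terms of phi}, $\|\psi\|_{W^{1,8/5}}\le C(\|Q\chi\|_{L^4}+\|\psi\|_{L^4})\|\na\phi\|_{L^{8/3}}$, since the term $|A|\|\na\phi\|_{L^{8/3}}\|\psi\|_{L^4}$ cannot be absorbed at that exponent), and then the global $L^{8/7}$ Calder\'on--Zygmund estimate for $\Delta$ applied to \eqref{EL-phi-extrinsic} to get $\|\na\phi\|_{L^{8/3}}\le C\vep_0^{3/4}\|\psi\|_{W^{1,8/5}}$; substituting one bound into the other forces $\na\phi\equiv0$ and then $\psi\equiv0$. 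You instead run the Dirac estimate at the $L^{4/3}$ level, where H\"older pairs $\|\na\phi\|_{L^2}\|\psi\|_{L^4}$ and the small coefficient can be absorbed, yielding the cleaner bound $\|\psi\|_{L^4}\le C\|Q\chi\|_{L^4}\|\na\phi\|_{L^2}$, and you close the loop not by $L^{8/7}$ elliptic theory but by testing the $\phi$-equation with $\phi-\bar\phi$, handling the critical term $\diverg V$ by a second integration by parts (correct, since $\|V\|_{L^2}\le 2\|Q\chi\|_{L^4}\|\psi\|_{L^4}$) and pulling out $\|\phi-\bar\phi\|_{L^\infty}$ from the remaining terms. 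The price of your route is that the smallness of $\osc_M\phi$ is genuinely needed (otherwise the term $\int|\phi-\bar\phi|\,|A|\,|\na\phi|^2$ carries a non-small constant), and you import it by patching Theorem \ref{thm-small energy regularity} and \eqref{Holder estimate} over a finite atlas of isothermal disks; this is legitimate, using the conformal invariance of Lemma \ref{conformal transformation lemma} and the observation (which you verify via the identities $e_\al\cdot\na\phi^i\cdot\chi^\al=-2(Q\chi)^\al\na_{e_\al}\phi^i$ and $V^i=-2\langle(Q\chi)^\al,\psi^i\rangle e_\al$) that only $Q\chi$ enters the equations, so hypothesis \eqref{ep-0 assumption} suffices for the local smallness — but note the paper's remark explicitly contrasts its proof with the cutoff-based local regularity argument, and its version stays entirely global and lighter in machinery. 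What your approach buys is a sharper intermediate spinor estimate and an argument that only uses $L^2$-energy-level quantities for $\phi$; what the paper's buys is independence from the $\vep$-regularity theorem and from any sup-norm/oscillation control.
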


The existence of harmonic spinors is closely related to the topological and Riemannian structures. Examples of closed surfaces which don't admit harmonic spinors include $\sph^2$ with arbitrary Riemannian metric and the torus $\mathbb{T}^2$ with a nontrivial spin structure, and many others. For more information on harmonic spinors one can refer to \cite{hitchin1974harmonic, bar1998harmonic}.

\begin{proof}[Proof of Proposition \ref{energy gaps}]
 When the spinor bundle $S$ doesn't admit nontrivial harmonic spinors, the Dirac operator is ``invertible'', in the sense that for any $1<p<\infty$, there holds
 \begin{equation}
  \|\sigma\|_{L^p(M)}\le C(p)\|\pd\sigma\|_{L^p(M)}, \qquad \forall \sigma \in \Gamma(S).
 \end{equation}
 See e.g. \cite{chen2008nonlinear} for a proof\footnote{There they show a proof for $p=\frac{4}{3}$, but it is easily generalized to a general $p\in(1,\infty)$.}. As $\pd$ is an elliptic operator of first order, one has
 \begin{equation}
  \|\na^s\psi^a\|_{L^{\frac{8}{5}}(M)}\le C\left(\|\pd\psi^a\|_{L^{\frac{8}{5}}(M)}+\|\psi^a\|_{L^{\frac{8}{5}}(M)}\right),\qquad 1\le a\le K.
 \end{equation}
 It follows that
 \begin{equation}\label{ellipticity of pd}
  \|\psi\|_{W^{1,\frac{8}{5}}(M)}\le C\|\pd\psi\|_{L^{\frac{8}{5}}(M)}+|A|\|\psi\|_{L^4(M)}\|\na\phi\|_{L^{\frac{8}{3}}(M)}.
 \end{equation}
 From \eqref{EL-psi-extrinsic} one gets
 \begin{equation}
  \begin{split}
   \|\pd\psi\|_{L^{\frac{8}{5}}(M)}
   \le& |A|\|\na\phi\|_{L^2(M)}\|\psi\|_{L^8(M)} +\|Q\chi\|^2_{L^4(M)}\|\psi\|_{L^8(M)} \\
      & \quad +|A|^2\|\psi\|^2_{L^4(M)}\|\psi\|_{L^8(M)}+\|Q\chi\|_{L^4(M)}\|\na\phi\|_{L^{\frac{8}{3}}(M)}.
  \end{split}
 \end{equation}
 Since \eqref{ep-0 assumption} holds, using \eqref{ellipticity of pd} one obtains
 \begin{equation}\label{control psi in terms of phi}
  \|\psi\|_{W^{1,\frac{8}{5}}(M)}\le C\left(\|Q\chi\|_{L^4(M)}+\|\psi\|_{L^4(M)}\right)\|\na\phi\|_{L^{\frac{8}{3}}(M)}.
 \end{equation}

 Next we deal with the map $\phi$. From \eqref{EL-phi-extrinsic} it follows that
 \begin{equation}
  \begin{split}
   \|\Delta\phi\|_{L^{\frac{8}{7}}(M)}
  \le& |A|\|\na\phi\|_{L^2(M)}\|\na\phi\|_{L^\frac{8}{3}(M)}
   +|A|^2\|\psi\|^2_{L^4(M)}\|\na\phi\|_{L^{\frac{8}{3}}(M)} \\
   &+|A||\na A|\|\psi\|^3_{L^4(M)}\|\psi\|_{L^8(M)}
    +\left(\|\widehat{\na}Q\chi\|_{L^{\frac{4}{3}}(M)}+C\|Q\chi\|_{L^{\frac{4}{3}}(M)}\right)\|\psi\|_{L^8(M)} \\
   &+\|Q\chi\|_{L^4(M)}\|\widetilde{\na}\psi\|_{L^{\frac{8}{5}}(M)}
     +|A|\|Q\chi\|_{L^4(M)}\|\psi\|_{L^4(M)}\|\na\phi\|_{L^{\frac{8}{3}}(M)}.
  \end{split}
 \end{equation}
 Combining with \eqref{ep-0 assumption} this gives
 \begin{equation}
  \|\na\phi\|_{L^{\frac{8}{3}}(M)}\le C\vep_0^{\frac{3}{4}} \|\psi\|_{W^{1,\frac{8}{5}}(M)}
  \le C\vep_0^{\frac{3}{4}}\left(\|Q\chi\|_{L^4(M)}+\|\psi\|_{L^4(M)}\right)\|\na\phi\|_{L^{\frac{8}{3}}(M)}.
 \end{equation}
 Therefore, when $\vep_0$ is sufficiently small, this implies $\na\phi\equiv 0$, that is, $\phi=const.$ Then \eqref{control psi in terms of phi} says that $\psi$ is also trivial.

\end{proof}

\begin{remark}
Observe that although the estimates here are similar to those in the proof of small energy regularities, they come from a different point of view. There we have to take cutoff functions to make the boundary terms vanish in order that the local elliptic estimates are applicable without boundary terms. Here, on the contrary, we rely on the hypothesis that $S$ doesn't admit nontrivial harmonic spinors to obtain the estimate \eqref{control psi in terms of phi} which is a global property.
\end{remark}


\section{Critical gravitino and energy-momentum tensor}

In this section we consider the energy-momentum tensor along a solution to \eqref{EL}. We will see that it gives rise to a holomorphic quadratic differential when the gravitino is critical, which is needed for the later analysis.

From now on we assume that the gravitino $\chi$ is also critical for the action functional with respect to variations; that is, for any smooth family $(\chi_t)_t$ of gravitinos with $\chi_0=\chi$, it holds that
\begin{equation}
 \dt{0}\A(\phi,\psi;g,\chi_t)=0.
\end{equation}
One can conclude from this by direct calculation that the \emph{supercurrent} $J=J^\al\otimes e_\al$ vanishes (or see \cite{jost2016note}), where
\begin{equation}
 J^\al=2\langle\phi_*e_\be, e_\be\cdot e_\al\cdot \psi\rangle_{\phi^*h}+|\psi|^2 e_\be\cdot e_\al\cdot \chi^\be.
\end{equation}
Equivalently it can be formulated as
\begin{equation}
 |\psi|^2 e_\be\cdot e_\al\cdot \chi^\be=-2\langle \phi_*e_\be,e_\be\cdot e_\al\cdot \psi\rangle_{\phi^*h}, \qquad \forall \al.
\end{equation}
Recall that $Q\chi=-\frac{1}{2}e_\be\cdot e_\al\cdot \chi^\be\otimes e_\al$. Thus
\begin{equation}\label{reformulation of |psi|Qx}
 |\psi|^2Q\chi=-\frac{1}{2}|\psi|^2 e_\be\cdot e_\al \cdot \chi^\be\otimes e_\al
 =\langle \phi_* e_\be, e_\be\cdot e_\al\cdot \psi\rangle_{\phi^*h}\otimes e_\al.
\end{equation}
It follows that
\begin{equation}
 \begin{split}
  |Q\chi|^2|\psi|^2=\langle\chi, |Q\chi|^2\chi\rangle_{\phi^*h}
  &=\left\langle\chi^\eta\otimes e_\eta, \langle\phi_*e_\be,e_\be\cdot e_\al\cdot\psi\rangle_{\phi^*h}\otimes e_\al \right\rangle_{g_s\otimes g} \\
  &=\langle\chi^\al\otimes\phi_*e_\be, e_\be\cdot e_\al\cdot\psi\rangle_{g_s\otimes \phi^*h} \\
  &=\left\langle e_\al\cdot e_\be\cdot \chi^\al\otimes \phi_* e_\be,\psi\right\rangle_{g_s\otimes\phi^*h} \\
  &=-2\left\langle(\mathds{1}\otimes \phi_*)Q\chi,\psi\right\rangle.
 \end{split}
\end{equation}
Since the Euler--Lagrange equations for $\psi$ are
\begin{equation}\label{EL-psi}
 \D\psi=\frac{1}{3}SR(\psi)+|Q\chi|^2\psi+2(\mathds{1}\otimes\phi_*)Q\chi,
\end{equation}
so if $\psi$ is critical, i.e. the above equation \eqref{EL-psi} holds, then
\begin{equation}
 \langle\psi, \D\psi\rangle=\frac{1}{3}\langle SR(\psi), \psi\rangle=\frac{1}{3}\Rm(\psi).
\end{equation}
Therefore the following relation holds:
\begin{equation}
 \langle\psi, e_2\cdot \widetilde{\na}_{e_2}\psi\rangle=-\langle\psi, e_1\cdot \widetilde{\na}_{e_1}\psi\rangle+\frac{1}{3}\Rm(\psi).
\end{equation}

\begin{lemma}
 For any $\phi$ and $\psi$, and for any $\be$,
 \begin{equation}
 e_\be(|Q\chi|^2|\psi|^2)
  =2\langle \na^s_{e_\be}(e_\al\cdot e_\eta\cdot \chi^\al)\otimes \phi_* e_\eta, \psi\rangle
     + 2|Q\chi|^2\langle\psi,\widetilde{\na}_{e_\be}\psi\rangle.
 \end{equation}
\end{lemma}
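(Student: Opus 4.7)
The plan is to reduce the identity to a purely Clifford-algebraic calculation by combining Leibniz with the critical-gravitino relation \eqref{reformulation of |psi|Qx}.

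First, because $\widehat{\na}$ on $S\otimes TM$ is compatible with $g_s\otimes g$ and $\widetilde{\na}$ on $S\otimes\phi^*TN$ is compatible with $g_s\otimes\phi^*h$, Leibniz gives
\begin{equation}
e_\be(|Q\chi|^2|\psi|^2) = 2|\psi|^2\langle\widehat{\na}_{e_\be}Q\chi,Q\chi\rangle + 2|Q\chi|^2\langle\psi,\widetilde{\na}_{e_\be}\psi\rangle.
\end{equation}
The second summand already matches the second summand of the asserted formula, so the whole task reduces to proving
\begin{equation}
|\psi|^2\langle\widehat{\na}_{e_\be}Q\chi,Q\chi\rangle = \langle\na^s_{e_\be}(e_\al\cdot e_\eta\cdot\chi^\al)\otimes\phi_*e_\eta,\psi\rangle.
\end{equation}

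Next, I would push $|\psi|^2$ inside the pairing and substitute the critical-gravitino identity \eqref{reformulation of |psi|Qx}, namely $|\psi|^2 Q\chi = \langle\phi_*e_\gamma,e_\gamma\cdot e_\delta\cdot\psi\rangle_{\phi^*h}\otimes e_\delta$, so that the LHS becomes a pairing coupling $\widehat{\na}_{e_\be}Q\chi$ directly to $\phi$ and $\psi$, no longer involving $|\psi|^2$. Since both sides of the identity are pointwise tensorial in $e_\be$, I would fix an arbitrary base point and work in Riemannian normal coordinates there. Then $\na^{TM}_{e_\be}e_\delta=0$, and using that $\na^s$ is a Clifford connection,
\begin{equation}
\widehat{\na}_{e_\be}Q\chi = -\tfrac{1}{2}\,\na^s_{e_\be}(e_\al\cdot e_\delta\cdot\chi^\al)\otimes e_\delta = -\tfrac{1}{2}\,e_\al\cdot e_\delta\cdot\na^s_{e_\be}\chi^\al\otimes e_\delta
\end{equation}
at that point.

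The main obstacle is then the Clifford-algebraic simplification. The key two-dimensional identity is $\sum_\delta e_\delta\cdot e_\gamma\cdot e_\al\cdot e_\delta = -2\,e_\al\cdot e_\gamma$, obtained by a short calculation from $e_\delta\cdot e_\delta = -1$ and the anti-commutation rule; together with the skew-symmetry of the Clifford action with respect to $g_s$ (so $(e_\al\cdot e_\delta)^\ast = e_\delta\cdot e_\al$), it produces, for every spinor $A$,
\begin{equation}
\sum_\delta\langle e_\al\cdot e_\delta\cdot A,\,e_\gamma\cdot e_\delta\cdot\psi^l\rangle_{g_s} = -2\langle e_\al\cdot e_\gamma\cdot A,\,\psi^l\rangle_{g_s}.
\end{equation}
Applying this with $A=\na^s_{e_\be}\chi^\al$ precisely absorbs the prefactor $-\tfrac{1}{2}$ and collapses the quantity into $\partial_\gamma\phi^k h_{kl}\langle\na^s_{e_\be}(e_\al\cdot e_\gamma\cdot\chi^\al),\psi^l\rangle$, which, after renaming the dummy frame index $\gamma\mapsto\eta$, is exactly $\langle\na^s_{e_\be}(e_\al\cdot e_\eta\cdot\chi^\al)\otimes\phi_*e_\eta,\psi\rangle$. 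The only remaining care points are keeping track of the several dummy frame indices through the Clifford step, and observing that everything extends by continuity across the closed set $\{|\psi|=0\}$, where the substitution step is vacuous.
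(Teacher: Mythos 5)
Your proof is correct, and it shares the paper's skeleton --- Leibniz rule, then the vanishing-supercurrent reformulation \eqref{reformulation of |psi|Qx}, then skew-adjointness of Clifford multiplication --- but the middle step is executed differently. The paper never differentiates $Q\chi$ directly: it writes $|Q\chi|^2=\langle\chi,Q\chi\rangle=-\tfrac12\langle\chi^\al,e_\eta\cdot e_\al\cdot\chi^\eta\rangle$, observes that the two terms produced by differentiating agree, so $e_\be(|Q\chi|^2)=-\langle\na^s_{e_\be}\chi^\al,e_\eta\cdot e_\al\cdot\chi^\eta\rangle$, and then substitutes \eqref{reformulation of |psi|Qx} and simply moves the Clifford factors across the $g_s$-pairing; no contraction over a repeated frame index ever appears. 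You instead write $e_\be(|Q\chi|^2)=2\langle\widehat{\na}_{e_\be}Q\chi,Q\chi\rangle$, expand $\widehat{\na}_{e_\be}Q\chi$ in a frame parallel at the chosen point, and must then collapse the resulting sum over the extra frame index via the two-dimensional contraction identity $\sum_\delta e_\delta\cdot e_\gamma\cdot e_\al\cdot e_\delta=-2\,e_\al\cdot e_\gamma$; I checked this identity and the pairing identity you derive from it, and the prefactor $-\tfrac12$ is indeed absorbed, so your computation closes and reproduces $\langle\na^s_{e_\be}(e_\al\cdot e_\eta\cdot\chi^\al)\otimes\phi_*e_\eta,\psi\rangle$. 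The price of your route is the extra Clifford lemma and the normal-frame expansion of $\widehat{\na}_{e_\be}Q\chi$; what it buys is that you avoid the $\langle\chi,Q\chi\rangle$ symmetrization trick. Two minor remarks: the closing continuity argument across $\{|\psi|=0\}$ is unnecessary, since \eqref{reformulation of |psi|Qx} holds pointwise (the supercurrent vanishes identically) and both sides of your reduced identity vanish where $\psi=0$; and, exactly as in the paper's own proof, your argument uses the criticality of the gravitino through \eqref{reformulation of |psi|Qx}, so the lemma's ``for any $\phi$ and $\psi$'' must be read within that standing assumption.
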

\begin{proof}
Since
\begin{equation}
 e_\be(|Q\chi|^2|\psi|^2)=e_\be(|Q\chi|^2)|\psi|^2+2|Q\chi|^2 \langle\psi,\widetilde{\na}_{e_\be}\psi\rangle,
\end{equation}
it suffices to compute $e_{\be}(|Q\chi|^2)|\psi|^2$. Note that
\begin{equation}
 \begin{split}
  e_\be(|Q\chi|^2)
   &=e_\be \langle\chi, Q\chi\rangle=-\frac{1}{2} e_\be\langle\chi^\al, e_\eta\cdot e_\al\cdot\chi^\eta\rangle\\
   &=-\frac{1}{2}\big(\langle\na^s_{e_\be}\chi^\al,e_\eta\cdot e_\al\cdot \chi^\eta\rangle
       +\langle \chi^\al,e_\eta\cdot e_\al \cdot \na^s_{e_\be}\chi^\eta\rangle \big)\\
   &=-\langle \na^s_{e_\be}\chi^\al, e_\eta\cdot e_\al\cdot \chi^\eta\rangle.
 \end{split}
\end{equation}
Therefore, by virtue of \eqref{reformulation of |psi|Qx},
\begin{equation}
 \begin{split}
   e_\be(|Q\chi|^2)|\psi|^2
    &=-\langle \na^s_{e_\be}\chi^\al, |\psi|^2 e_\eta\cdot e_\al\cdot \chi^\eta\rangle
     =2\big\langle \na^s_{e_\be}\chi^\al,\langle\phi_*  e_\eta, e_\eta\cdot e_\al\cdot \psi\rangle_{\phi^*h}\big\rangle_{g_s} \\
    &=2\langle\na^s_{e_\be}\chi^\al\otimes\phi_* e_\eta, e_\eta\cdot e_\al\cdot \psi\rangle
     =2\langle \na^s_{e_\be}(e_\al\cdot e_\eta\cdot \chi^\al)\otimes \phi_* e_\eta, \psi\rangle.
 \end{split}
\end{equation}
The desired equality follows.

\end{proof}

\begin{lemma}
 For any $\phi$ and $\psi$,
 \begin{equation}\label{vanishing mixed term}
 \langle (\mathds{1}\otimes\phi_*)Q\chi, \omega\cdot \psi \rangle=0,
 \end{equation}
 where $\omega=e_1\cdot e_2$ is the volume element.
\end{lemma}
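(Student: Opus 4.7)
The plan is to exploit the critical-gravitino hypothesis, which by the rewriting carried out just before the lemma is equivalent to the pointwise identity \eqref{reformulation of |psi|Qx},
\begin{equation*}
 |\psi|^2 Q\chi = \langle \phi_*e_\be,\, e_\be\cdot e_\al\cdot\psi\rangle_{\phi^*h}\otimes e_\al,
\end{equation*}
and then to reduce the desired pairing to a short algebraic computation in the two-dimensional Clifford bundle, using that the volume element $\omega=e_1\cdot e_2$ satisfies $\omega^2=-1$ and that Clifford multiplication by a tangent vector is skew-symmetric with respect to the real spin metric.

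Concretely, I would apply $\mathds{1}\otimes\phi_*$ to the displayed identity and pair it with $\omega\cdot\psi$. Introducing the auxiliary spinors $\psi_\al\coloneqq \langle\phi_*e_\al,\psi\rangle_{\phi^*h}\in\Gamma(S)$ obtained by contracting the $\phi^*TN$-factor of $\psi$ against $\phi_*e_\al$, the product form of the bundle metric $g_s\otimes\phi^*h$ gives
\begin{equation*}
 |\psi|^2\bigl\langle(\mathds{1}\otimes\phi_*)Q\chi,\omega\cdot\psi\bigr\rangle
  = \sum_{\al,\be}\bigl\langle e_\be\cdot e_\al\cdot\psi_\be,\;\omega\cdot\psi_\al\bigr\rangle_{g_s}.
\end{equation*}

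The two diagonal terms ($\al=\be$) vanish because $e_\al\cdot e_\al=-1$ and $\omega$ is skew, so each reduces to $-\langle\psi_\al,\omega\cdot\psi_\al\rangle=0$. For the two off-diagonal terms I would use $e_1\cdot e_2=\omega$ and $e_2\cdot e_1=-\omega$, together with the isometry relation $\langle\omega\sigma,\omega\sigma'\rangle=\langle\sigma,\sigma'\rangle$ (a direct consequence of $\omega$ being skew with $\omega^2=-1$); they contribute $-\langle\psi_2,\psi_1\rangle_{g_s}+\langle\psi_1,\psi_2\rangle_{g_s}$, which cancel by symmetry of the spin metric. Thus $|\psi|^2\langle(\mathds{1}\otimes\phi_*)Q\chi,\omega\cdot\psi\rangle$ vanishes pointwise, and at zeros of $\psi$ the conclusion is automatic, so the identity holds on all of $M$.

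There is no real analytic obstacle: the whole content is already packaged in the critical-gravitino reformulation \eqref{reformulation of |psi|Qx} derived just above, and once that is in hand the lemma is a purely algebraic consequence. The only care required is bookkeeping the Clifford signs coming from $e_\al\cdot e_\be+e_\be\cdot e_\al=-2\delta_{\al\be}$ and the interplay between the skew-symmetry of $\omega$ and the symmetry of the real spin metric.
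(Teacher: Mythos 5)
Your proposal is correct and follows essentially the same route as the paper: both start from the critical-gravitino reformulation \eqref{reformulation of |psi|Qx}, multiply by $|\psi|^2$, reduce to a pointwise Clifford-algebra identity in $g_s$, and treat the zeros of $\psi$ separately. The only (cosmetic) difference is that you expand the double sum explicitly into its diagonal and off-diagonal terms via the contracted spinors $\psi_\al$, whereas the paper shows the same expression equals its own negative by shuttling $e_1\cdot e_2$ and $e_\eta\cdot e_\al$ across the inner product and relabeling indices.
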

\begin{proof}
 Since
 \begin{equation}
  |\psi|^2 (\mathds{1}\otimes\phi_*)Q\chi
  =-\frac{1}{2}|\psi|^2 e_\eta \cdot e_\al\cdot \chi^\eta\otimes \phi_*e_\al
  =\langle \phi_* e_\eta, e_\eta\cdot e_\al\cdot\psi \rangle_{\phi^*h} \otimes \phi_*e_\al,
 \end{equation}
  We have
 \begin{equation}
  \begin{split}
      |\psi|^2\langle(\mathds{1}\otimes\phi_*)Q\chi, e_1\cdot e_2\cdot\psi\rangle
   =&\left\langle
      \langle \phi_* e_\eta, e_\eta\cdot e_\al\cdot\psi \rangle_{\phi^*h} \otimes \phi_*e_\al, e_1\cdot e_2\cdot\psi
     \right\rangle  \\
   =&\big\langle
     \langle \phi_* e_\eta,e_\eta\cdot e_\al\cdot\psi\rangle_{\phi^*h},
     \langle\phi_* e_\al, e_1\cdot e_2\cdot\psi\rangle_{\phi^*h}
     \big\rangle_{g_s}.
  \end{split}
 \end{equation}
 According to the Clifford relation it holds that
 \begin{equation}
  \begin{split}
   \big\langle
     \langle \phi_* e_\eta,e_\eta\cdot e_\al\cdot\psi\rangle_{\phi^*h},
     &\langle\phi_* e_\al, e_1\cdot e_2\cdot\psi\rangle_{\phi^*h}
     \big\rangle_{g_s} \\
   =&\big\langle
     \langle \phi_* e_\eta,e_2\cdot e_1\cdot e_\eta\cdot e_\al\cdot\psi\rangle_{\phi^*h},
     \langle\phi_* e_\al, \psi\rangle_{\phi^*h}
     \big\rangle_{g_s} \\
   =&\big\langle
     \langle \phi_* e_\eta,e_\eta\cdot e_\al\cdot e_2\cdot e_1\cdot \psi\rangle_{\phi^*h},
    \langle\phi_* e_\al, \psi\rangle_{\phi^*h}
     \big\rangle_{g_s} \\
   =&\big\langle
     \langle \phi_* e_\eta,  e_2\cdot e_1\cdot \psi\rangle_{\phi^*h},
     \langle\phi_* e_\al,e_\al\cdot e_\eta\cdot\psi\rangle_{\phi^*h}
     \big\rangle_{g_s} \\
   =&-\big\langle
     \langle \phi_* e_\al,  e_1\cdot e_2\cdot \psi\rangle_{\phi^*h},
     \langle\phi_* e_\eta,e_\eta\cdot e_\al\cdot\psi\rangle_{\phi^*h}
     \big\rangle_{g_s}.
  \end{split}
 \end{equation}
It follows that
\begin{equation}
 |\psi|^2\langle(\mathds{1}\otimes\phi_*)Q\chi, e_1\cdot e_2\cdot\psi\rangle=0.
\end{equation}
At any point $x\in M$, if $\psi(x)=0$, then \eqref{vanishing mixed term} holds; and if $|\psi(x)|\neq 0$, then by the calculations above \eqref{vanishing mixed term} also holds. This finishes the proof.

\end{proof}
\begin{remark}
More explicitly \eqref{vanishing mixed term} is equivalent to
\begin{equation}\label{vanishing mixed term-local}
 \langle e_1\cdot e_2\cdot\chi^1\otimes\phi_*e_1+\chi^1\otimes\phi_*e_2-\chi^2\otimes\phi_*e_1+e_1\cdot e_2\cdot\chi^2\otimes\phi_*e_2,\psi\rangle=0.
\end{equation}
\end{remark}

From \cite{jost2016note} we know the energy-momentum tensor is given by $T=T_{\al\be}e^\al\otimes e^\be$ where
\begin{equation}\label{energy-momentum}
 \begin{split}
  T_{\al\be}
  =&2\langle \phi_*e_\al,\phi_* e_\be\rangle_{\phi^*h}-|\dd\phi|^2 g_{\al\be}
     +\frac{1}{2}\left\langle\psi,e_\al\cdot\widetilde{\na}_{e_\be}\psi+e_\be\cdot\widetilde{\na}_{e_\al}\psi\right\rangle_{g_s\otimes\phi^*h}
                        -\langle\psi,\D_g\psi\rangle g_{\al\be}\\
   & +\langle e_\eta\cdot e_\al\cdot\chi^\eta\otimes \phi_*e_\be+e_\eta\cdot e_\be\cdot\chi^\eta\otimes\phi_*e_\al, \psi\rangle_{g_s\otimes\phi^*h}
                 +4\langle(\mathds{1}\otimes\phi_*)Q\chi,\psi\rangle g_{\al\be}\\
   &\qquad + |Q\chi|^2 |\psi|^2 g_{\al\be}
                 +\frac{1}{6}\Rm(\psi)g_{\al\be}.
 \end{split}
\end{equation}
Suppose that $(\phi,\psi)$ satisfies the Euler--Lagrange equations \eqref{EL} and that the supercurrent $J$ vanishes. Then
\begin{equation}
 \begin{split}
  T_{\al\be}
  =&2\langle\phi_* e_\al,\phi_* e_\be\rangle-|\dd\phi|^2 g_{\al\be}
    +\frac{1}{2}\langle\psi, e_\al\cdot\widetilde{\na}_{e_\be}\psi+e_\be\cdot\widetilde{\na}_{e_\al}\psi\rangle
    -\frac{1}{2}\langle\psi,\D\psi\rangle g_{\al\be} \\
   &+\langle e_\eta\cdot e_\al\cdot\chi^\eta\otimes \phi_*e_\be+e_\eta\cdot e_\be\cdot\chi^\eta\otimes\phi_*e_\al, \psi\rangle -\langle e_\theta\cdot e_\eta\cdot\chi^\theta\otimes\phi_* e_\eta,\psi\rangle g_{\al\be}.
 \end{split}
\end{equation}
Clearly $T$ is symmetric and traceless. We will show it is also divergence free. Before this we rewrite it into a suitable form. Multiplying $\omega=e_1\cdot e_2$ to both sides of equations \eqref{EL-psi}, we get
\begin{equation}
 e_2\cdot\widetilde{\na}_{e_1}\psi-e_1\cdot \widetilde{\na}_{e_2}\psi
 =\frac{1}{3}\omega\cdot SR(\psi)+|Q\chi|^2\omega\cdot\psi+2\omega\cdot(\mathds{1}\otimes\phi_*)Q\chi.
\end{equation}
Note that the right hand side is perpendicular to $\psi$:
\begin{equation}
 \begin{split}
  \langle\psi, \omega\cdot SR(\psi)\rangle
  &=R_{ijkl}\langle\psi^j,\psi^l\rangle \langle\psi^i,\omega\cdot\psi^k\rangle=0,\\
  |Q\chi|^2\langle\psi,\omega\cdot\psi\rangle
  &=0,\\
  \langle2\omega\cdot(\mathds{1}\otimes\phi_*)Q\chi,\psi\rangle
   &=-2\langle(\mathds{1}\otimes\phi_*)Q\chi,\omega\cdot\psi\rangle=0.
 \end{split}
\end{equation}
Hence $\langle\psi, e_2\cdot \widetilde{\na}_{e_1}\psi\rangle-\langle\psi, e_1\cdot \widetilde{\na}_{e_2}\psi\rangle=0$. Consequently,
\begin{equation}
 \frac{1}{2}\langle\psi, e_\al\cdot\widetilde{\na}_{e_\be}\psi+e_\be\cdot\widetilde{\na}_{e_\al}\psi\rangle
 =\langle\psi, e_\al\cdot \widetilde{\na}_{e_\be}\psi\rangle.
\end{equation}
Moreover, by \eqref{vanishing mixed term-local},
\begin{equation}
 \begin{split}
  \langle e_\eta\cdot e_1\cdot\chi^\eta\otimes
  &\phi_*e_2,\psi\rangle
  -\langle e_\eta\cdot e_2\cdot\chi^\eta\otimes\phi_* e_1,\psi\rangle \\
  =&\langle-\chi^1\otimes\phi_*e_2-e_1\cdot e_2\cdot\chi^2\otimes\phi_*e_2-e_1\cdot e_2\cdot\chi^1\otimes\phi_*e_1+\chi^2\otimes\phi_*e_1,\psi\rangle=0.
 \end{split}
\end{equation}
Therefore, we can put the energy-momentum tensor into the following form:
\begin{equation}\label{energy-momentum-2}
 \begin{split}
  T_{\al\be}
  =&2\langle\phi_* e_\al,\phi_* e_\be\rangle-|\dd\phi|^2 g_{\al\be}
    +\langle\psi, e_\al\cdot\widetilde{\na}_{e_\be}\psi\rangle
    -\frac{1}{2}\langle\psi,\D\psi\rangle g_{\al\be} \\
   &+2\langle e_\eta\cdot e_\al\cdot\chi^\eta\otimes \phi_*e_\be, \psi\rangle
    -\langle e_\theta\cdot e_\eta\cdot\chi^\theta\otimes\phi_* e_\eta,\psi\rangle g_{\al\be}.
 \end{split}
\end{equation}
This form relates closely to the energy-momentum tensor for
Dirac-harmonic maps in \cite[Section 3]{chen2006dirac} and that for
Dirac-harmonic maps with curvature term in \cite[Section
4]{jost2015geometric}, which also have the following nice
properties. Such computations have been provided in \cite[Section
3]{branding2015some}, but since certain algebraic aspects are
different here, we need to spell out the computations in detail.
\begin{prop}\label{properties of energy-momentum tensor}
 Let $(\phi,\psi,\chi)$ be critical. Then the tensor $T$ given by \eqref{energy-momentum} or equivalently \eqref{energy-momentum-2} is symmetric, traceless, and covariantly conserved.
\end{prop}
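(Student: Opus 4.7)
The symmetry of $T$ is visible by inspection of \eqref{energy-momentum}: the $\phi$-block $2\langle\phi_*e_\al,\phi_*e_\be\rangle-|d\phi|^2 g_{\al\be}$ is manifestly symmetric, the spinor block has been explicitly symmetrized as $\tfrac{1}{2}(e_\al\cdot\widetilde{\na}_{e_\be}\psi+e_\be\cdot\widetilde{\na}_{e_\al}\psi)$, and the gravitino coupling has been written as a sum over the two orderings of $\al,\be$. So no work is needed beyond this observation.

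For tracelessness I would work in an orthonormal frame and compute $g^{\al\be}T_{\al\be}$ term by term. The purely $\phi$-terms give $2|d\phi|^2-2|d\phi|^2=0$. The spinor terms collapse to $\langle\psi,\D\psi\rangle-2\langle\psi,\D\psi\rangle=-\langle\psi,\D\psi\rangle$. Using the identification $\sum_\al e_\eta\cdot e_\al\cdot\chi^\eta\otimes\phi_*e_\al=-2(\mathds{1}\otimes\phi_*)Q\chi$, the first gravitino line contributes $-4\langle(\mathds{1}\otimes\phi_*)Q\chi,\psi\rangle+8\langle(\mathds{1}\otimes\phi_*)Q\chi,\psi\rangle$, and the remaining terms add $2|Q\chi|^2|\psi|^2+\tfrac{1}{3}R(\psi)$. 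Substituting the Euler--Lagrange equation \eqref{EL-psi} into $\langle\psi,\D\psi\rangle$ and then invoking the identity $|Q\chi|^2|\psi|^2=-2\langle(\mathds{1}\otimes\phi_*)Q\chi,\psi\rangle$ established from the vanishing supercurrent, all remaining terms cancel in pairs.

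For divergence freeness, I would fix a point $p$ and choose a local orthonormal frame with $\na^M e_\al(p)=0$, so that $(\diverg T)_\be=\sum_\al e_\al(T_{\al\be})|_p$. I plan to group the calculation into three blocks and integrate by parts algebraically against the Euler--Lagrange system \eqref{EL}. The $\phi$-block $e_\al(2\langle\phi_*e_\al,\phi_*e_\be\rangle)-e_\be(|d\phi|^2)$ produces $2\langle\tau(\phi),\phi_*e_\be\rangle$ after the symmetric second-derivative terms cancel. The spinor block requires commuting the two covariant derivatives via $[\widetilde{\na}_{e_\al},\widetilde{\na}_{e_\be}]\psi=\widetilde R(e_\al,e_\be)\psi$ and using the $\psi$-equation in \eqref{EL} to substitute for $\D\psi$; this creates curvature terms of the form $\tfrac{1}{2}R(\psi,e_\be\cdot\psi)\phi_*e_\be$ and gradient-of-curvature terms of the form $S\na R(\psi)$, precisely matching the corresponding terms in $\tau(\phi)$ from the first equation of \eqref{EL}. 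The gravitino block is handled using the preceding lemma for $e_\be(|Q\chi|^2|\psi|^2)$ together with \eqref{vanishing mixed term-local} and \eqref{reformulation of |psi|Qx}, both consequences of $J=0$; the derivatives falling on $Q\chi$ produce exactly the $-(\langle\na^s_{e_\be}(e_\al\cdot e_\be\cdot\chi^\al),\psi\rangle+\langle e_\al\cdot e_\be\cdot\chi^\al,\widetilde{\na}_{e_\be}\psi\rangle)$ contribution to $\tau(\phi)$ in \eqref{EL}. Assembling the three blocks one gets $(\diverg T)_\be=2\langle\tau(\phi)-\text{RHS of }\phi\text{-equation},\phi_*e_\be\rangle$, which vanishes on critical points.

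The main obstacle is the bookkeeping in the gravitino block: several Clifford contractions arise that are not individually zero but cancel only after applying the local supercurrent identities \eqref{vanishing mixed term-local} and \eqref{reformulation of |psi|Qx} and the two-dimensional peculiarity that the $\omega$-part of $\D\psi$ is algebraically controlled (as exploited just before \eqref{energy-momentum-2}). I would keep the computation in the form \eqref{energy-momentum-2} rather than \eqref{energy-momentum}, since there the spinor term is already reduced to $\langle\psi,e_\al\cdot\widetilde{\na}_{e_\be}\psi\rangle$ without the symmetrization, making the commutator computation cleaner; symmetry of the resulting divergence in $\al$ (and hence the cancellation of the remaining antisymmetric piece) then follows from the identity $\langle\psi,e_2\cdot\widetilde{\na}_{e_1}\psi\rangle=\langle\psi,e_1\cdot\widetilde{\na}_{e_2}\psi\rangle$ proved in the paragraph preceding \eqref{energy-momentum-2}.
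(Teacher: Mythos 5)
Your plan is correct and follows essentially the same route as the paper's proof: symmetry and tracelessness come from the supercurrent identities $|Q\chi|^2|\psi|^2=-2\langle(\mathds{1}\otimes\phi_*)Q\chi,\psi\rangle$ and the on-shell relation $\langle\psi,\D\psi\rangle=\tfrac13\Rm(\psi)$, and covariant conservation is proved exactly as you describe, in a normal frame, substituting the Euler--Lagrange equations, commuting $\D$ past $\widetilde{\na}_{e_\be}$, and cancelling the gravitino contributions via the lemma on $e_\be(|Q\chi|^2|\psi|^2)$ together with \eqref{vanishing mixed term-local} and \eqref{reformulation of |psi|Qx}. The only detail left implicit in your sketch is that the spin-curvature (Ricci) part of the commutator pairs to zero with $\psi$, i.e. $\langle\psi,\Ric(e_\be)\psi\rangle=0$, which the paper records explicitly.
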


\begin{proof}
 It remains to show that $T$ is covariantly conserved. Let $x\in M$ and take the normal coordinate at $x$ such that $\na e_\al(x)=0$. We will show that $\na_{e_\al}T_{\al\be}(x)=0$. At the point $x$, making use of the Euler--Lagrange equations, one can calculate as follows.
 \begin{itemize}
  \item
        \begin{equation}
        \begin{split}
         \na_{e_\al}(2\langle\phi_* e_\al,
           &\phi_* e_\be\rangle
            -2|\dd\phi|^2 g_{\al\be} )  \\
          =& 2\langle\na_{e_\al}(\phi_* e_\al),\phi_* e_\be\rangle+2\langle\phi_* e_\al,\na_{e_\al}(\phi_* e_\be)\rangle
              -2\langle\phi_* e_\al, \na_{e_\be}(\phi_* e_\al)\rangle \\
          =& 2\langle \tau(\phi),\phi_* e_\be \rangle \\
          =& \left\langle \Rm(\psi, e_\al\cdot\psi)\phi_* e_\al, \phi_* e_\be\right\rangle
                  -\frac{1}{6}\langle S\na R(\psi),\phi_* e_\be\rangle   \\
           & \quad -2\langle \na^s_{e_\al}(e_\eta\cdot e_\al\cdot \chi_\eta)\otimes \phi_* e_\be,\psi\rangle
                   -2\langle e_\eta\cdot e_\al\cdot\chi_\eta\otimes\phi_* e_\be, \widetilde{\na}_{e_\al}\psi\rangle.
        \end{split}
        \end{equation}
  \item
        \begin{equation}
         \begin{split}
          \na_{e_\al}(\langle\psi,\;
             &e_\al\cdot\widetilde{\na}_{e_\be}\psi\rangle
             -\langle\psi,\D\psi\rangle g_{\al\be}) \\
            =&\langle\widetilde{\na}_{e_\al}\psi, e_\al\cdot\widetilde{\na}_{e_\be}\psi\rangle+\langle\psi, e_\al\cdot\widetilde{\na}_{e_\al}\widetilde{\na}_{e_\be}\psi\rangle
                -\langle\widetilde{\na}_{e_\be}\psi,\D\psi\rangle-\langle\psi,\widetilde{\na}_{e_\be}\D\psi\rangle \\
            =&-\langle\D\psi, \widetilde{\na}_{e_\be}\psi\rangle+\langle\psi, \D\widetilde{\na}_{e_\be}\psi\rangle
                -\langle\widetilde{\na}_{e_\be}\psi,\D\psi\rangle-\langle\psi,\widetilde{\na}_{e_\be}\D\psi\rangle \\
            =&-2\langle\D\psi,\widetilde{\na}_{e_\be}\psi\rangle+\langle\psi,\D\widetilde{\na}_{e_\be}\psi-\widetilde{\na}_{e_\be}\D\psi\rangle.
         \end{split}
        \end{equation}
        Note that
        \begin{equation}
         \begin{split}
          \D\widetilde{\na}_{e_\be}\psi-\widetilde{\na}_{e_\be}\D\psi
          =& e_\al\cdot \Ric^{S}(e_\al,e_\be)\psi+ \Rm(\phi_* e_\al, \phi_* e_\be)\\
          =&\frac{1}{2}\Ric(e_\be)\psi+\Rm(\phi_* e_\al, \phi_* e_\be) e_\al\cdot\psi,
         \end{split}
        \end{equation}
        and that ${\langle\psi, \Ric(e_\be)\psi\rangle =0}$.
        Hence one has
        \begin{equation}
         \begin{split}
          \na_{e_\al}(\langle\psi,\;
             & e_\al\cdot\widetilde{\na}_{e_\be}\psi\rangle
                 -\langle\psi,\D\psi\rangle g_{\al\be} )\\
            =&-2\langle |Q\chi|^2\psi+\frac{1}{3}SR(\psi)+2(\mathds{1}\otimes\phi_*)Q\chi, \widetilde{\na}_{e_\be}\psi\rangle  \\
             &\quad  +\langle\psi, \Rm(\phi_* e_\al, \phi_* e_\be) e_\al\cdot\psi\rangle\\
            =& -2|Q\chi|^2\langle\psi, \widetilde{\na}_{e_\be}\psi\rangle -\frac{2}{3}\langle SR(\psi),\widetilde{\na}_{e_\be}\psi\rangle
               -4\langle (\mathds{1}\otimes \phi_*)Q\chi, \widetilde{\na}_{e_\be}\psi\rangle  \\
             &\quad -\langle\Rm(\psi, e_\al\cdot\psi)\phi_* e_\al, \phi_* e_\be\rangle .
         \end{split}
        \end{equation}
   \item
         \begin{equation}
          \begin{split}
           \na_{e_\al}\left(\frac{1}{6}\Rm(\psi)g_{\al\be}\right)
           =\frac{1}{6}\langle S\na R(\psi), \phi_* e_\be\rangle+\frac{2}{3}\langle SR(\psi),\widetilde{\na}_{e_\be}\psi\rangle.
          \end{split}
         \end{equation}
   \item
         \begin{equation}
          \begin{split}
           \na_{e_\al}\big(
                & 2\langle e_\eta\cdot e_\al\cdot\chi_\eta\otimes\phi_* e_\be,\psi\rangle
                  -\delta_{\al\be}\langle e_\eta\cdot e_\eta\cdot\chi_\eta\otimes \phi_* e_\eta, \psi\rangle\big) \\
                &=2\langle\na^s_{e_\al}(e_\eta\cdot e_\al\cdot \chi_\eta)\otimes\phi_* e_\be, \psi\rangle
                  +2\langle e_\eta\cdot e_\al\cdot \chi_\eta\otimes\na_{e_\al}(\phi_* e_\be), \psi\rangle \\
                &\qquad +2\langle e_\eta\cdot e_\al\cdot \chi_\eta\otimes\phi_* e_\be, \widetilde{\na}_{e_\al}\psi\rangle
                   -\na_{e_\be}\big(\langle e_\eta\cdot e_\al\cdot\chi_\eta\otimes \phi_* e_\al,\psi \rangle\big).
          \end{split}
         \end{equation}
 \end{itemize}
 Summarize these terms and use the previous lemmata to get
 \begin{equation}
  \begin{split}
   \na_{e_\al} T_{\al\be}
    &= -2|Q\chi|^2\langle\psi, \widetilde{\na}_{e_\be}\psi\rangle -4\langle (\mathds{1}\otimes \phi_*)Q\chi, \widetilde{\na}_{e_\be}\psi\rangle  \\
    &   \qquad+2\langle e_\eta\cdot e_\al\cdot \chi_\eta\otimes\na_{e_\al}(\phi_* e_\be), \psi\rangle
         -\na_{e_\be}\big(\langle e_\eta\cdot e_\al\cdot\chi_\eta\otimes \phi_* e_\al,\psi \rangle \\
    &= 2\langle\na^s_{e_\be}(e_\al\cdot e_\eta \cdot\chi_\eta)\otimes\phi_* e_\eta, \psi\rangle
        -\na_{e_\be}\big(\langle e_\eta\cdot e_\al\cdot \chi_\eta \otimes \phi_* e_\al, \psi\rangle\big) \\
    &\qquad +2\langle e_\eta\cdot e_\al\cdot \chi_\eta\otimes\phi_* e_\al,\widetilde{\na}_{e_\be}\psi\rangle
             +2\langle e_\eta\cdot e_\al\cdot \chi_\eta\otimes\na_{e_\be}(\phi_* e_\al), \psi\rangle \\
    &\qquad -\na_{e_\be}\big(\langle e_\eta\cdot e_\al\cdot \chi_\eta \otimes \phi_* e_\al, \psi\rangle\big) \\
    &=0.
  \end{split}
 \end{equation}
 This accomplishes the proof.

\end{proof}

As in the harmonic map case, such a 2-tensor then corresponds to a holomorphic quadratic differential on $M$. For the case of Dirac-harmonic maps (with or without curvature terms), see \cite{chen2006dirac, jost2015geometric} and  \cite{branding2015some}. More precisely, in a local isothermal coordinate $z=x+iy$, set
\begin{equation}
 T(z)\dd z^2\coloneqq(T_{11}-iT_{12})(\dd x+i\dd y)^2,
\end{equation}
with $T_{11}$ and $T_{12}$ now being the coefficients of the energy-momentum tensor $T$ in the local coordinate, that is,
\begin{equation}
 \begin{split}
  T_{11}
  &=\left|\frac{\p\phi}{\p x}\right|^2-\left|\frac{\p\phi}{\p y}\right|^2
   +\frac{1}{2}\left(\langle\psi,\gamma(\p_x)\widetilde{\na}_{\p_x}\psi\rangle-\langle\psi,\gamma(\p_y)\widetilde{\na}_{\p_y}\psi\rangle\right)
    +F_{11},\\
  T_{12}
  &=\left\langle\frac{\p\phi}{\p x},\frac{\p\phi}{\p y}\right\rangle_{\phi^*h}
      +\langle\psi,\gamma(\p_x)\widetilde{\na}_{\p_y}\psi\rangle+F_{12}.
 \end{split}
\end{equation}
Here we have abbreviated the gravitino terms as $F_{\al\be}$'s:
\begin{equation}
 \begin{split}\label{abbreviation F}
  F_{11}&=2\langle-\chi^x\otimes\phi_*(\p_x)-\gamma(\p_x)\gamma(\p_y)\chi^y\otimes\phi_*(\p_x),\psi\rangle
           +2\langle(\mathds{1}\otimes\phi_*)Q\chi, \psi\rangle g(\p_x,\p_x), \\
  F_{12}&=2\langle-\chi^x\otimes\phi_*(\p_y)-\gamma(\p_x)\gamma(\p_y)\chi^y\otimes \phi_*(\p_y),\psi\rangle,
 \end{split}
\end{equation}
where $\chi=\chi^x\otimes \p_x+\chi^y\otimes\p_y$ in a local chart.
\begin{prop}\label{holomorphicity of T(z)}
 The quadratic differential $T(z)\dd z^2$ is well-defined and holomorphic.
\end{prop}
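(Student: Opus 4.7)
The approach will reduce the proposition to a direct coordinate computation using only the three properties of $T$ established in Proposition~\ref{properties of energy-momentum tensor}, namely symmetry, tracelessness, and covariant conservation. This is the same mechanism by which the Hopf differential of a harmonic map, and the analogous quadratic differentials for Dirac-harmonic maps (with or without curvature term) in \cite{chen2006dirac, jost2015geometric}, are shown to be holomorphic, so no new analytic input beyond Proposition~\ref{properties of energy-momentum tensor} is required.

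\textbf{Well-definedness.} In an isothermal coordinate $z = x+iy$ with $g = e^{2u}|\dd z|^2$, I will expand $T$ in the complex basis $\{\dd z, \dd \bar z\}$. Symmetry of $T$ together with tracelessness $g^{\al\be}T_{\al\be}=0$ force $T_{z\bar z}=0$, so that
\begin{equation}
 T = T_{zz}\,\dd z^2 + T_{\bar z\bar z}\,\dd \bar z^2.
\end{equation}
A direct change-of-basis computation using $\p_z = (\p_x - i\p_y)/2$ together with $T_{22} = -T_{11}$ gives $T_{zz} = \tfrac{1}{2}(T_{11}-iT_{12})$. Hence $T(z)\,\dd z^2$ is (twice) the $(2,0)$-part of the tensor $T$ and is automatically tensorial, so it is independent of the choice of conformal coordinate and defines a global section of $\Sym^2(T^{1,0*}M)$.

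\textbf{Holomorphicity.} For the conformal metric $g = e^{2u}\dd z\,\dd\bar z$, the only non-vanishing Christoffel symbols in complex coordinates are $\Gamma^z_{zz}=2\p_z u$ and its conjugate. Specializing the conservation law from Proposition~\ref{properties of energy-momentum tensor} by contracting against $\dd x^\beta$ with $\beta = z$ gives
\begin{equation}
 0 = g^{\al\gamma}\na_\gamma T_{\al z} = g^{z\bar z}\,\na_{\bar z}T_{zz} + g^{\bar z z}\,\na_z T_{\bar z z}.
\end{equation}
The second term vanishes because $T_{\bar z z}=0$ and the only surviving Christoffel $\Gamma^z_{zz}$ multiplies $T_{\bar z z}=0$; in the first term every Christoffel $\Gamma^\mu_{\bar z z}$ is zero, so $\na_{\bar z}T_{zz} = \p_{\bar z}T_{zz}$. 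Therefore $\p_{\bar z}T_{zz}=0$, which is precisely the Cauchy--Riemann equation for $T(z) = T_{11}-iT_{12}$, and holomorphicity follows.

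\textbf{Main obstacle.} The genuinely substantial work has already been done in Proposition~\ref{properties of energy-momentum tensor}, where the vanishing of the supercurrent and the Euler--Lagrange equations were crucially used; after that input, the remaining argument is essentially bookkeeping. The one point that deserves attention is to verify that the coordinate-frame expressions for $T_{11}, T_{12}$, and for the gravitino remainders $F_{11}, F_{12}$ stated just before the proposition are genuinely obtained from the orthonormal-frame formula~\eqref{energy-momentum-2} by specializing to the frame $e_\al = e^{-u}\p_\al$, accounting for the rescaling $\p_\al\cdot = e^{u}\,e_\al\cdot$ of Clifford multiplication and the corresponding rescaling of the components $\chi^\al$. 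Once this identification is confirmed, the two steps above finish the proof.
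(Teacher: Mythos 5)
Your proof is correct and follows essentially the same route as the paper, which simply observes that well-definedness is straightforward and that holomorphicity follows from the symmetry, tracelessness and covariant conservation of $T$ established in Proposition~\ref{properties of energy-momentum tensor}; you have merely written out the standard computation (vanishing of $T_{z\bar z}$, identification $T_{zz}=\tfrac12(T_{11}-iT_{12})$, and $\na_{\bar z}T_{zz}=\p_{\bar z}T_{zz}=0$ from the conservation law) that the paper leaves implicit.
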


\begin{proof}
 The well-definedness is straightforward and the holomorphicity follows from Proposition \ref{properties of energy-momentum tensor}.

\end{proof}


\section{Pohozaev identity and removable singularities}

In this section we show that a solution of \eqref{EL} with finite
energy admits no isolated poles, provided that the gravitino is
critical. As the singularities under consideration are isolated, we
can locate the solution on the punctured Euclidean unit disk
$B_1^*\equiv B_1\backslash \{0 \}$. Using the quadratic holomorphic
differential derived in the previous section, we obtain the Pohozaev
type formulae containing gravitino terms in Theorem \ref{Pohozaev
  Identity}. When the gravitino vanishes, they will reduce to the
Pohozaev identities for Dirac-harmonic maps with curvature term, see
e.g. \cite[Lemma 5.3]{jost2015geometric} and also \cite[Lemma
3.11]{branding2016energy} where a somewhat different identity is derived. 

Recall that the $F_{\al\be}$'s are given in \eqref{abbreviation F} and they can be controlled via Young inequality by
\begin{equation}
 |F_{\al\be}|\le C|\na\phi||\psi||\chi|\le C(|\na\phi|^2+|\psi|^4+|\chi|^4).
\end{equation}

\begin{proof}[Proof of Theorem \ref{Pohozaev Identity}]

 By definition we have
 \begin{equation}
  |T(z)|\le C\left(|\na\phi|^2+|\widetilde{\na}\psi||\psi|+|F_{\al\be}|\right).
 \end{equation}
 Note that $|\na\psi|\le C(|\na^s\psi|+|\psi||\na\phi|)$. Apply the Young inequality once again to obtain
 \begin{equation}
  |T(z)|\le C\left(|\na\phi|^2+|\psi|^4+|\na^s\psi|^{\frac{4}{3}}+|\chi|^4\right).
 \end{equation}
 From the initial assumptions we known that $\phi\in W^{1,2}(B_1^*, N)$, $\psi\in L^4(B_1^*)$ and $\chi$ is smooth in $B_1$, thus by Theorem \ref{weak solutions extension}, $(\phi,\psi)$ is actually a weak solution on the whole disk $B_1$. Using the ellipticity of the Dirac operator, $\psi$ belongs to $W^{1,\frac{4}{3}}_{loc}(B_1)$. Therefore $|T(z)|$ is integrable on the disk $B_r$ for any $r<1$. Recall from Proposition \ref{holomorphicity of T(z)} that $T(z)$ is a holomorphic function defined on the punctured disk. Hence, it has a pole at the origin of order at most one. In particular, $z T(z)$ is holomorphic in the whole disk. Then by Cauchy theorem, for any $0<r<1$, it holds that $\int_{|z|=r} z T(z)\dd z=0$. One can compute that in polar coordinate $z=re^{i\theta}$,
\begin{equation}\label{real part of zzq}
 \begin{split}
  \frac{1}{r^2}Re(z^2 T(z))
  =&\left|\frac{\p\phi}{\p r}\right|^2-\frac{1}{r^2}\left|\frac{\p\phi}{\p\theta}\right|^2
   +\frac{1}{2}\left(\left\langle\psi,\gamma(\p_r)\na_{\p_r}\psi\right\rangle
                     -\left\langle\psi,\frac{1}{r^2}\gamma(\p_\theta)\na_{\p_\theta}\psi\right\rangle\right) \\
   &+F_{11}\cos{2\theta}+F_{12}\sin{2\theta}.
 \end{split}
\end{equation}
The identity $\langle\psi,\D\psi\rangle=\Rm(\psi)/3$ along a critical $\psi$ implies
\begin{equation}
 \begin{split}
  \frac{1}{2}\left( \left\langle\psi,\gamma(\p_r)\na_{\p_r}\psi\right\rangle
                     -\left\langle\psi,\frac{1}{r^2}\gamma(\p_\theta)\na_{\p_\theta}\psi\right\rangle \right)
  &=\langle\psi,\gamma(\p_r)\na_{\p_r}\psi\rangle-\frac{1}{6}\Rm(\psi) \\
  &=-\left\langle\psi,\frac{1}{r^2}\gamma(\p_\theta)\na_{\p_\theta}\psi\right\rangle+\frac{1}{6}\Rm(\psi).
 \end{split}
\end{equation}
 Finally, it suffices to note that
 \begin{equation}
  Im\left(\int_{|z|=r} z T(z)\dd z\right)=r \int_0^{2\pi} Re(z^2 T(z))\dd \theta.
 \end{equation}
\end{proof}

Integrating \eqref{Pohozaev identity} with respect to the radius, we get
\begin{equation}
 \begin{split}
 \int_{B_1} \left|\frac{\p\phi}{\p r}\right|^2-\frac{1}{r^2}\left|\frac{\p\phi}{\p\theta}\right|^2 \dd x
   =&\int_{B_1} -\langle\psi, \gamma(\p_r)\na_{\p_r}\psi\rangle+\frac{1}{6}\Rm(\psi) -(F_{11}\cos{2\theta}+F_{12}\sin{2\theta})\dd x \\
   =&\int_{B_1} \left\langle\psi,\frac{1}{r^2}\gamma(\p_\theta)\na_{\p_\theta}\psi\right\rangle-\frac{1}{6}\Rm(\psi)- (F_{11}\cos{2\theta}+F_{12}\sin{2\theta})\dd x.
 \end{split}
\end{equation}
Meanwhile note that in polar coordinate $(r,\theta)$,
\begin{equation}\label{gradient in polar coordinate}
 |\na\phi|^2=\left|\frac{\p\phi}{\p r}\right|^2+\frac{1}{r^2}\left|\frac{\p\phi}{\p\theta}\right|^2.
\end{equation}
This can be combined with Theorem \ref{Pohozaev Identity} to give estimates on each component of the gradient of the map $\phi$; in particular,
\begin{equation}\label{angle derivative}
\int_{B_1}\frac{1}{r^2}\left|\frac{\p\phi}{\p\theta}\right|^2\dd x
 =\frac{1}{2} \int_{B_1} |\na\phi|^2 +\langle\psi,\gamma(\p_r)\na_{\p_r}\psi\rangle-\frac{1}{6}\Rm(\psi)
    +F_{11}\cos{2\theta}+ F_{12}\sin{2\theta} \dd x.
\end{equation}

Next we consider the isolated singularities of a solution. We show they are removable provided the gravitino is critical and does not have a singularity there, and the energy of the solution is finite. Different from Dirac-harmonic maps in \cite[Theorem 4.6]{chen2006dirac} and those with curvature term in \cite[Theorem 6.1]{jost2015geometric} (ses also \cite[Theorem 3.12]{branding2016energy}), we obtain this result using the regularity theorems of weak solutions. Thus we have to show first that weak solutions can be extended over an isolated point in a punctured neighborhood. This is achieved in the Appendix.

\begin{thm}[restate=Removable Singularity, label=removable singularities]
 \textnormal{\textbf{(Removable singularity.)}}
 Let $(\phi,\psi)$ be a smooth solution defined on the punctured disk $B_1^*\equiv B_1\backslash\{0\}$. If $\chi$ is a smooth critical gravitino on $B_1$ and if $(\phi,\psi)$ has finite energy on $B_1^*$, then $(\phi,\psi)$ extends to a smooth solution on $B_1$.
\end{thm}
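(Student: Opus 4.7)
The plan is to follow the two-step strategy hinted at by the authors, namely weak extension across the puncture followed by the interior regularity theorem of \cite{jost2016regularity}. Because the Pohozaev identity is already established and the appendix will provide the weak-extension result \ref{weak solutions extension}, the conceptual work for the theorem itself is largely reduced to assembling these ingredients carefully.

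First I would establish that $(\phi,\psi)$ has natural extensions across the origin in the right Sobolev spaces. Since $N$ is compact, $\phi$ is bounded and the finite-energy assumption $\int_{B_1^*}(|\na\phi|^2+|\psi|^4)\,\dd x<\infty$ together with the fact that an isolated point has zero $W^{1,2}$-capacity in dimension two lets me extend $\phi$ to a map in $W^{1,2}(B_1,N)$; similarly $\psi\in L^4(B_1^*)$ extends trivially to an $L^4(B_1)$ vector spinor. This supplies the integrability hypotheses required by Theorem \ref{weak solutions extension}, which in turn promotes $(\phi,\psi)$ from a smooth solution on $B_1^*$ to a weak solution on all of $B_1$ with respect to the smooth gravitino $\chi$. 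The exact same passage was invoked in the proof of Theorem \ref{Pohozaev Identity}, so the only place where real work is required is exactly the appendix lemma; the content there is the standard cutoff argument, testing against $\eta_\eps\zeta$ with $\eta_\eps$ supported outside $B_\eps$ and $|\na\eta_\eps|\le C/\eps$, and checking that the boundary contributions arising from the divergence structure terms (in particular those generated by the $V^i$'s from \eqref{def of V} and by $e_\al\cdot\na\phi^i\cdot\chi^\al$) vanish in the limit $\eps\to 0$ thanks to $\na\phi\in L^2$, $\psi\in L^4$ and $\chi$ smooth.

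Once the weak-solution status on $B_1$ is in hand, I would invoke the regularity theorem for weak solutions of \eqref{EL} with smooth metric and smooth gravitino established in \cite{jost2016regularity}: any such weak solution is smooth. Applied at the origin this immediately upgrades the extended pair $(\phi,\psi)$ to a smooth solution across $0$. It is worth noting that, in contrast to the approaches of \cite{chen2006dirac} and \cite{jost2015geometric} for Dirac-harmonic maps (with or without curvature term), no direct Pohozaev- or monotonicity-based decay argument at the puncture is needed here; the Pohozaev identity has already been used in the previous subsection, and the hard analytical content has been isolated in the extension lemma.

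The main obstacle, then, is really the appendix lemma rather than the theorem itself: one needs to verify carefully that both the map equation \eqref{EL-phi-extrinsic} and the spinor equation \eqref{EL-psi-extrinsic} pass to the limit against test functions supported through the origin. The map equation is standard because all nonlinearities are at most quadratic in $\na\phi$ and $\psi$, pairing into $L^1$ terms that absorb the cutoff. The spinor equation is more delicate since $\pd\psi$ is only controlled distributionally and the right-hand side contains the term $e_\al\cdot\na\phi^i\cdot\chi^\al$, but with $\chi$ smooth this term is in $L^2$, and combined with $\psi\in L^4$ one obtains enough integrability to justify the limit. Assuming this is carried out in the appendix, the theorem follows directly.
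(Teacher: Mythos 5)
Your proposal is correct and follows essentially the same route as the paper: extend $(\phi,\psi)$ as a weak solution across the puncture via the appendix theorem (Theorem \ref{weak solutions extension}), then invoke the regularity theory of \cite{jost2016regularity} for weak solutions with smooth gravitino. The only cosmetic difference is that the paper first passes to a smaller disk and rescales so that $E(\phi,\psi)$ and $\|\chi\|_{W^{1,4/3}}$ are small before applying the regularity result, a harmless extra step given that the cited regularity statement is local in nature.
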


\begin{proof}
 From Theorem \ref{weak solutions extension} in the Appendix we know that $(\phi,\psi)$ is also a weak solution on the whole disk $B_1$. By taking a smaller disc centered at the origin and rescaling as above, one may assume that~$E(\phi,\psi;B_1)$ and $\|\chi\|_{W^{1,\frac{4}{3}}(B_1)}$ are sufficiently small. From the result in \cite{jost2016regularity} we then see that $(\phi,\psi)$ is actually smooth in $B_{1/2}(0)$. In addition to the assumption, we see that it is a smooth solution on the whole disk.

\end{proof}

\section{Energy identity}

In this section we consider the compactness of the critical points space, i.e. the space of solutions of \eqref{EL}. In the end we will prove the main result, the energy identities in Theorem \ref{energy identity thm}. As in \cite[Lemma 3.2]{zhao2006energy} we establish the following estimate for $\psi$ on annulus domains, which is useful for the proof of energy identities. Let $0<2r_2< r_1<1$.
\begin{lemma}\label{estimates of psi on annulus}
 Let $\psi$ be a solution of \eqref{EL-psi-extrinsic} defined on $A_{r_2,r_1}\equiv B_{r_1}\backslash B_{r_2}$. Then
 \begin{equation}
  \begin{split}
   \|\widetilde{\na}\psi\|_{L^{\frac{4}{3}}(B_{r_1}\backslash B_{2r_2})}
   &+\|\psi\|_{L^{4}(B_{r_1}\backslash B_{2r_2})} \\
   \le& C_0\left(|A|\|\na\phi\|_{L^2(A_{r_2,r_1})}+\|Q\chi\|^2_{L^4(A_{r_2,r_1})}
               +|A|^2\|\psi\|^2_{L^4(A_{r_2,r_1})}\right)\|\psi\|_{L^4(A_{r_2,r_1})}\\
      & +C\|Q\chi\|_{L^4(A_{r_2,r_1})}\|\na\phi\|_{L^2(A_{r_2,r_1})}
        +C\|\psi\|_{L^4(B_{2r_2}\backslash B_{r_2})} \\
      & +C r_1^{\frac{3}{4}}\|\widetilde{\na}\psi\|_{L^{\frac{4}{3}}(\p B_{r_1})}
        +C r_1^{\frac{1}{4}}\|\psi\|_{L^4(\p B_{r_1})},
  \end{split}
 \end{equation}
 where $C_0\ge 1$ is a universal constant which doesn't depend on $r_1$ and $r_2$.
\end{lemma}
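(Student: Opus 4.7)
The plan is to follow the strategy of \cite[Lemma 3.2]{zhao2006energy}: cut off $\psi$ near the inner boundary of the annulus, apply a scale-invariant elliptic estimate for the spin Dirac operator on the full disk $B_{r_1}$, and let the outer boundary $\partial B_{r_1}$ contribute the boundary terms on the right-hand side. The extrinsic equation \eqref{EL-psi-extrinsic} then supplies the gravitino and curvature contributions, and H\"older's inequality organises everything into the desired shape.

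First I would choose a smooth cutoff $\eta\colon B_{r_1}\to[0,1]$ with $\eta\equiv 0$ on $B_{r_2}$, $\eta\equiv 1$ on $B_{r_1}\setminus B_{2r_2}$, and $|\nabla\eta|\le C/r_2$, so that $\|\nabla\eta\|_{L^2(B_{r_1})}$ is bounded by a scale-invariant constant. Writing $\psi=(\psi^1,\dots,\psi^K)$ extrinsically, each $\eta\psi^i$ is a spinor on $B_{r_1}$ satisfying
\begin{equation*}
\pd(\eta\psi^i)=\nabla\eta\cdot\psi^i+\eta\,\pd\psi^i,
\end{equation*}
with $\pd\psi^i$ expanded via \eqref{EL-psi-extrinsic}. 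The core technical input is a scale-invariant $L^{4/3}$ elliptic estimate for $\pd$ on $B_{r_1}$ with inhomogeneous boundary data, combined with the critical two-dimensional Sobolev embedding $W^{1,4/3}(B_{r_1})\hookrightarrow L^4(B_{r_1})$:
\begin{equation*}
\|\widetilde{\na}(\eta\psi)\|_{L^{4/3}(B_{r_1})}+\|\eta\psi\|_{L^4(B_{r_1})}\le C\bigl(\|\pd(\eta\psi)\|_{L^{4/3}(B_{r_1})}+r_1^{3/4}\|\widetilde{\na}\psi\|_{L^{4/3}(\p B_{r_1})}+r_1^{1/4}\|\psi\|_{L^4(\p B_{r_1})}\bigr).
\end{equation*}
The weights $r_1^{3/4}$ and $r_1^{1/4}$ on the boundary are exactly those dictated by the rescaling in Lemma \ref{conformal transformation lemma}, which ensures that every term is scale-invariant and that $C$ can be chosen universal.

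Next I would bound the right-hand side. Since $\nabla\eta$ is supported in $B_{2r_2}\setminus B_{r_2}$ with $\|\nabla\eta\|_{L^2}\le C$, H\"older gives
\begin{equation*}
\|\nabla\eta\cdot\psi\|_{L^{4/3}(B_{r_1})}\le C\|\psi\|_{L^4(B_{2r_2}\setminus B_{r_2})},
\end{equation*}
producing the third term of the asserted inequality. Substituting \eqref{EL-psi-extrinsic} into $\eta\,\pd\psi$ and applying H\"older term-by-term yields
\begin{equation*}
\|\eta\,\pd\psi\|_{L^{4/3}(B_{r_1})}\le\bigl(|A|\|\na\phi\|_{L^2(A)}+\|Q\chi\|_{L^4(A)}^2+|A|^2\|\psi\|_{L^4(A)}^2\bigr)\|\eta\psi\|_{L^4(B_{r_1})}+C\|Q\chi\|_{L^4(A)}\|\na\phi\|_{L^2(A)},
\end{equation*}
where $A\equiv A_{r_2,r_1}$. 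Crucially, the lemma has no smallness hypothesis, so I would \emph{not} attempt to absorb $\|\eta\psi\|_{L^4}$ into the left; instead I would simply estimate $\|\eta\psi\|_{L^4(B_{r_1})}\le\|\psi\|_{L^4(A)}$. Finally, since $\eta\equiv 1$ on $B_{r_1}\setminus B_{2r_2}$, one has $\|\widetilde{\na}\psi\|_{L^{4/3}(B_{r_1}\setminus B_{2r_2})}+\|\psi\|_{L^4(B_{r_1}\setminus B_{2r_2})}\le\|\widetilde{\na}(\eta\psi)\|_{L^{4/3}(B_{r_1})}+\|\eta\psi\|_{L^4(B_{r_1})}$, and the passage between the componentwise $\na^s$ produced by the Dirac estimate and the twisted $\widetilde{\na}$ costs only a correction $|A|\|\na\phi\|_{L^2}\|\psi\|_{L^4}$, which is already captured by the first term. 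Collecting all contributions yields the stated inequality.

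The main obstacle is establishing the scale-invariant elliptic estimate with the particular boundary terms in the middle step. This can be handled either via a Green-type representation formula for $\pd$ on the disk (using the essentially Cauchy--Riemann nature of $\pd$ in two dimensions) or by subtracting off a Dirac-harmonic extension of the boundary trace and applying interior $L^p$-theory to the remainder; in both approaches the precise boundary weights $r_1^{3/4}$ and $r_1^{1/4}$ are forced by dimensional analysis under rescaling.
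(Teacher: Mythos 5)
Your proposal is correct and follows essentially the same route as the paper: a cutoff vanishing on $B_{r_2}$ and equal to one outside $B_{2r_2}$, a boundary-inclusive $L^{4/3}$ elliptic estimate for $\pd$ on the disk combined with the embedding $W^{1,\frac{4}{3}}\hookrightarrow L^4$, H\"older applied term-by-term to \eqref{EL-psi-extrinsic} with $\|\eta\psi\|_{L^4}$ bounded by the full annulus norm rather than absorbed, and the $\na^s$-versus-$\widetilde{\na}$ correction folded into the first term. The only difference is presentational: the paper first rescales to the unit disk and quotes the boundary estimate from \cite[Lemma 4.7]{chen2006dirac}, rather than working directly at scale $r_1$ with your scale-invariant formulation, whose boundary weights $r_1^{3/4}$ and $r_1^{1/4}$ you identified correctly.
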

\begin{proof}
 Under a rescaling by $1/r_1$, the domain $A_{r_2,r_1}$ changes to $B_1\backslash B_{r_0}$ where $r_0=r_2/r_1$. By rescaling invariance it suffices to prove it on $B_1\backslash B_{r_0}$. Choose a cutoff function $\eta_{r_0}$ such that $\eta_{r_0}=1$ in $B_1\backslash B_{2r_0}$, $\eta_{r_0}=0$ in $B_{r_0}$, and that $|\na\eta_{r_0}|\le C/r_0$.  Similarly as in the previous sections, the equations for $\eta_{r_0}\psi$ read
 \begin{equation}
  \begin{split}
   \pd\left(\eta_{r_0}\psi^i\right)
   =&\eta_{r_0}\left(-A^i_{jk}\na\phi^j\cdot\psi^k+|Q\chi|^2\psi^i
                     +\frac{1}{3}A^i_{jm}A^m_{kl}\left(\langle\psi^k,\psi^l\rangle\psi^j
                                                         -\langle\psi^j,\psi^k\rangle\psi^l\right)\right)\\
    & -\eta_{r_0} e_\al\cdot\na\phi^i\cdot\chi^\al
     +\na\eta_{r_0}\cdot\psi^i.
  \end{split}
 \end{equation}
 Using \cite[Lemma 4.7]{chen2006dirac}, we can estimate
 \begin{equation}
  \begin{split}
   \|\eta_{r_0}\psi\|_{W^{1,\frac{4}{3}}(B_1)}
   \le& C'_0|A|\big\|\eta_{r_0}|\na\phi||\psi|\big\|_{L^{\frac{4}{3}}(B_1)}
         +C'_0\big\|\eta_{r_0}|Q\chi|^2|\psi|\big\|_{L^{\frac{4}{3}}(B_1)}
         +C'_0|A|^2\big\|\eta_{r_0}|\psi|^3\big\|_{L^{\frac{4}{3}}(B_1)} \\
      & +C'_0\big\|\eta_{r_0}|\na\phi||Q\chi|\big\|_{L^{\frac{4}{3}}(B_1)}
         +C'_0\big\||\na\eta_{r_0}||\psi|\big\|_{L^{\frac{4}{3}}(B_1)}
         +C'_0\|\eta_{r_0}\psi\|_{W^{1,\frac{4}{3}}(\p B_1)},
  \end{split}
 \end{equation}
 where the constant $C'_0$ is also from \cite[Lemma 4.7]{chen2006dirac}. This implies that
 \begin{equation}
  \begin{split}
   \|\psi\|_{W^{1,\frac{4}{3}}(B_1\backslash B_{2r_0})}
   \le&2 C'_0|A|\|\na\phi\|_{L^2(B_1\backslash B_{r_0})}\|\psi\|_{L^4(B_1\backslash B_{r_0})}
        +C'_0\|Q\chi\|^2_{L^4(B_1\backslash B_{r_0})}\|\psi\|_{L^4(B_1\backslash B_{r_0})} \\
      & +C'_0|A|^2\|\psi\|^3_{L^4(B_1\backslash B_{r_0})}
        +C'_0\|Q\chi\|_{L^4(B_1\backslash B_{r_0})}\|\na\phi\|_{L^2(B_1\backslash B_{r_0})}\\
      & +C'_0\|\na\eta_{r_0}\|_{L^2(B_{2r_0}\backslash B_{r_0})}\|\psi\|_{L^4(B_{2r_0}\backslash B_{r_0})}
        +C'_0\|\eta_{r_0}\psi\|_{W^{1,\frac{4}{3}}(\p B_1)} \\
   \le 2C'_0 &\left(|A|\|\na\phi\|_{L^2(B_1\backslash B_{r_0})}+\|Q\chi\|^2_{L^4(B_1\backslash B_{r_0})}
               +|A|^2\|\psi\|^2_{L^4(B_1\backslash B_{r_0})}\right)\|\psi\|_{L^4(B_1\backslash B_{r_0})}\\
       +&C'_0\|Q\chi\|_{L^4(B_1\backslash B_{r_0})}\|\na\phi\|_{L^2(B_1\backslash B_{r_0})}
        +C'_0\|\psi\|_{L^4(B_{2r_0}\backslash B_{r_0})}
        +C'_0\|\eta_{r_0}\psi\|_{W^{1,\frac{4}{3}}(\p B_1)} .
  \end{split}
 \end{equation}
 Using the Sobolev embedding theorem, we obtain the estimate on $B_1\backslash B_{r_0}$, and scaling back, we get the desired result with $C_0=2C'_0$.

\end{proof}

Thanks to the invariance under rescaled conformal transformations, the estimate in Lemma \ref{estimates of psi on annulus} can be applied to any conformally equivalent domain, in particular we will apply it on cylinders later.

Similarly we can estimate the energies of the map $\phi$ satisfying \eqref{EL} on the annulus domains, in the same flavor as for Dirac-harmonic maps, see e.g. \cite[Lemma 3.3]{zhao2006energy}.

\begin{lemma}\label{estimates of phi on annulus}
 Let $(\phi,\psi)$ be a solution of \eqref{EL} defined on $A_{r_2,r_1}$ with critical gravitino. Then
  \begin{equation}
  \begin{split}
   \int_{B_{r_1}\backslash B_{r_2}} |\na\phi|^2\dd x
   \le& C\int_{B_{r_1}\backslash B_{r_2}} |A|^2|\psi|^4+|\widetilde{\na}\psi|^{\frac{4}{3}}+|Q\chi|^2|\psi|^2\dd x \\
      & +C\int_{\p(B_{r_1}\backslash B_{r_2})} (q-\phi) \left(\langle V,\frac{\p}{\p r}\rangle-\frac{\p\phi}{\p r}\right) \dd s \\
     +C_1\sup_{B_{r_1}\backslash B_{r_2}}& |q-\phi| \int_{B_{r_1}\backslash B_{r_2}} |A|^2|\na\phi|^2
          +|A|(|A|+|\na A|)|\psi|^4+|\psi|^2|Q\chi|^2\dd x. \\
  \end{split}
 \end{equation}
 Here $C_1\ge1$ is some universal constant.
\end{lemma}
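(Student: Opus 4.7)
The plan is to derive the estimate by testing \eqref{EL-phi-extrinsic} against $q-\phi$ for a convenient constant vector $q\in\R^K$ (for instance the mean value of $\phi$ on the annulus), integrating by parts, and then distributing the resulting bulk contributions between the two right-hand side buckets. This follows the strategy of \cite[Lemma 3.3]{zhao2006energy}, modified to accommodate the gravitino contributions encoded in $V^i$ and $|Q\chi|^2$.

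I multiply \eqref{EL-phi-extrinsic} by $(q^i-\phi^i)$, sum over $i$, and integrate over $B_{r_1}\backslash B_{r_2}$. Integration by parts on the left yields $\int|\na\phi|^2$ plus the boundary contribution $\int_{\p(B_{r_1}\backslash B_{r_2})}(q-\phi)^i\partial_\nu\phi^i\,\dd s$, and a further integration by parts of $-(q-\phi)^i\diverg V^i$ on the right produces $-\int\na\phi^i\cdot V^i$ plus $-\int_{\p(B_{r_1}\backslash B_{r_2})}(q-\phi)^i\langle V^i,\nu\rangle\,\dd s$. Since the outward normals are $\p/\p r$ on $\p B_{r_1}$ and $-\p/\p r$ on $\p B_{r_2}$, the two boundary pieces collapse into the single integral stated in the lemma.

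The bulk terms are then estimated termwise using Young/Cauchy--Schwarz. The quadratic piece contributes $\sup|q-\phi|\cdot|A|^2\int|\na\phi|^2$ to the sup bucket (using $|A|\ge 1$), and the curvature piece contributes $\sup|q-\phi|\cdot|A|(|A|+|\na A|)\int|\psi|^4$ using $\|Z\|\le|A||\na A|$. For the cross term $(q-\phi)^i A^i_{jm}A^m_{kl}\langle\psi^j,\na\phi^k\cdot\psi^l\rangle$, the direct Young split $|A|^2|q-\phi||\na\phi||\psi|^2\le\tfrac12|q-\phi|^2|A|^2|\na\phi|^2+\tfrac12|A|^2|\psi|^4$ already yields the non-sup $|A|^2|\psi|^4$ term along with a further sup-bucket contribution. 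To extract the $|\widetilde{\na}\psi|^{4/3}$ term I instead substitute \eqref{EL-psi-extrinsic} into the same cross term, rewriting $A^m_{kl}\na\phi^k\cdot\psi^l=-\pd\psi^m+|Q\chi|^2\psi^m+\tfrac13A^m_{pn}A^n_{qr}(\cdots)-e_\al\cdot\na\phi^m\cdot\chi^\al$, and then apply Young with exponents $(4,4/3)$: $|q-\phi||A||\psi||\pd\psi|\le\tfrac14|q-\phi|^4|A|^4|\psi|^4+\tfrac34|\pd\psi|^{4/3}$. The first piece is dominated by the sup bucket using boundedness of $\phi$ and $A$, while the second supplies the non-sup $|\widetilde{\na}\psi|^{4/3}$ term. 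The non-sup $|Q\chi|^2|\psi|^2$ term arises from $-\int\na\phi\cdot V$ via $|V|\le C|Q\chi||\psi|$ and Young with a small $\vep$, $|\na\phi||Q\chi||\psi|\le\tfrac{\vep}{2}|\na\phi|^2+\tfrac{1}{2\vep}|Q\chi|^2|\psi|^2$, absorbing $\vep\int|\na\phi|^2$ into the left-hand side. The remaining pieces from the $\psi$-equation substitution (cubic in $A$, the $|Q\chi|^2\psi$ contribution, and the Clifford product with $\chi$) and the last term $A^i_{jk}\langle V^j,\na\phi^k\rangle$ of \eqref{EL-phi-extrinsic} all fall into either bucket after analogous splittings.

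The main obstacle is the bookkeeping in the Young splittings, in particular ensuring that $|\widetilde{\na}\psi|^{4/3}$ appears \emph{without} a $\sup|q-\phi|$ prefactor: this requires the specific exponents $(4,4/3)$ used above, which push the entire $(q-\phi)$-dependence onto the quartic-$\psi$ side of the inequality, where it is controlled by the sup bucket via the boundedness of $\phi$ and $A$. The second delicate but routine point is the sign tracking of the several boundary contributions so that they combine into the single term of the lemma.
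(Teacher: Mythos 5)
Your proposal proves a genuinely weaker variant of the lemma, not the lemma the paper needs, and it bypasses the paper's key mechanism. In the paper (following \cite[Lemma 3.3]{zhao2006energy}, which you cite as your model), $q$ is \emph{not} a constant: it is the radially symmetric comparison function, piecewise linear in $\log r$, which interpolates the circle averages of $\phi$ at dyadic radii. The statement of Lemma \ref{estimates of phi on annulus} is to be read with that $q$, and the proof of Theorem \ref{energy identity thm} depends on it in two ways. First, on a long neck piece of small energy a constant $q$ does not satisfy $\sup|q-\phi|\le C_*\sqrt{\vep}$ (a map can drift a definite distance while carrying arbitrarily little energy, e.g.\ $\Phi=a+bt$ on a cylinder of length $1/|b|$), whereas for the circle-average interpolant $|q-\phi|$ is controlled by \emph{local} oscillations via \eqref{Holder estimate}, uniformly in the length of the piece; without this, the term $C_1\sup|q-\phi|\int|A|^2|\na\phi|^2$ can no longer be absorbed in the neck analysis. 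Second, when the estimate is summed over the fine subdivision of a neck, the interior boundary contributions must cancel, which works because adjacent pieces share the same value of $q$ (the circle average of $\phi$) on the common circle; with one constant per piece they do not telescope, and the leftover interface terms do not tend to zero. Now, once $q$ is this non-constant radial function, your direct integration by parts no longer produces $\int|\na\phi|^2$: Green's formula only controls $\int|\dd q-\dd\phi|^2$, which (since $q$ is radial) dominates merely the angular part $\int r^{-2}|\p_\theta\phi|^2$. The paper recovers the radial part precisely through the Pohozaev identity of Theorem \ref{Pohozaev Identity} (in the form \eqref{angle derivative}), and this is where the non-sup terms $|\widetilde{\na}\psi|^{4/3}$, $|A|^2|\psi|^4$, $|Q\chi|^2|\psi|^2$ in the statement come from; the paper explicitly flags this identity as the crucial ingredient. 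Both essential ideas --- the choice of $q$ and the use of the Pohozaev identity --- are absent from your argument.

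Two secondary points. Your detour of substituting \eqref{EL-psi-extrinsic} into the cross term in order to ``produce'' the $|\widetilde{\na}\psi|^{4/3}$ term is both unnecessary (an upper bound need not contain every term appearing on the right-hand side) and harmful: the Young split with exponents $(4,4/3)$ leaves a factor $|q-\phi|^3|A|^4$ to be lodged in the sup bucket, so the constant $C_1$ would depend on $N$ and $A$ instead of being universal as claimed. Also, a careful integration by parts gives the combined boundary contribution $-\int_{\p}(q-\phi)\left(\p_\nu\phi+\langle V,\nu\rangle\right)\dd s$ with the outward normal $\nu$, which does not have the relative sign of the stated term $(q-\phi)\left(\langle V,\p_r\rangle-\frac{\p\phi}{\p r}\right)$; this is harmless because only the absolute size of the boundary term is ever used (and the paper itself is loose here), but your assertion that the pieces ``collapse into the single integral stated in the lemma'' is not exact.
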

\begin{proof}
 Make a rescaling as in Lemma \ref{estimates of psi on annulus}. Choose a function $q(r)$ on $B_1$ which is piecewise linear in $\log r$ with
 \begin{equation}
  q(\frac{1}{2^m})=\frac{1}{2\pi}\int_0^{2\pi} \phi(\frac{1}{2^m},\theta)\dd\theta,
 \end{equation}
 for $r_0\le 2^{-m}\le 1$, and $q(r_0)$ is defined to be the average of $\phi$ on the circle of radius $r_0$. Then $q$ is harmonic in $A_{m}\coloneqq \{\frac{1}{2^m}<r<\frac{1}{2^{m-1}}\}\subset B_1\backslash B_{r_0}$ and in the annulus near the boundary $\{x\in\R^2\big||x|=r_0\}$. Note that
 \begin{equation}
  \Delta(q-\phi)=-\Delta\phi=-A(\phi)(\na\phi,\na\phi)+\diverg V-f,
 \end{equation}
 where $V$ is given by \eqref{def of V} and $f$ is an abbreviation for
 \begin{equation}
  f^i\equiv A^i_{jm} A^m_{kl}\langle\psi^j,\na\phi^k\cdot\psi^l\rangle
         +Z^i(A,\na A)_{jklm}\langle\psi^j,\psi^l\rangle\langle\psi^k,\psi^m\rangle
         -A^i_{jk}\langle V^j,\na\phi^k\rangle.
 \end{equation}
 Using Green's formula we get
 \begin{equation}
  \begin{split}
   \int_{B_1\backslash B_{r_0}}|\dd q-\dd\phi|^2\dd x
   =& -\int_{B_1\backslash B_{r_0}} (q-\phi)\Delta(q-\phi)\dd x
         +\int_{\p(B_1\backslash B_{r_0})} (q-\phi)\frac{\p}{\p r}(q-\phi) \dd s.
  \end{split}
 \end{equation}
 Since $q(r_0)$ is the average of $\phi$ over $\p B_{r_0}$ we see that
 \begin{equation}
  \int_{\p(B_1\backslash B_{r_0})} (q-\phi)\frac{\p}{\p r}(q-\phi) \dd s
  =-\int_{\p(B_1\backslash B_{r_0})} (q-\phi)\frac{\p\phi}{\p r} \dd s.
 \end{equation}
 By the equation of $(q-\phi)$,
 \begin{equation}
  \begin{split}
   -\int_{B_1\backslash B_{r_0}} (q-\phi)\Delta(q-\phi)\dd x
   &=\int_{B_1\backslash B_{r_0}} (q-\phi)\left( A(\phi)(\na\phi,\na\phi)+f\right)-(q-\phi)\diverg V \dd x \\
   &=\int_{B_1\backslash B_{r_0}} (q-\phi)\left( A(\phi)(\na\phi,\na\phi)+f\right)+\langle\na(q-\phi),V\rangle \dd x\\
   &\qquad     +\int_{\p(B_1\backslash B_{r_0})} (q-\phi)\langle V,\frac{\p}{\p r}\rangle \dd s.
  \end{split}
 \end{equation}
 These together imply that
 \begin{equation}
  \begin{split}
   \int_{B_1\backslash B_{r_0}}|\dd q-\dd\phi|^2\dd x
  \le& \int_{B_1\backslash B_{r_0}} 2(q-\phi)\left( A(\phi)(\na\phi,\na\phi)+f\right)+|V|^2\dd x \\
     & +\int_{\p(B_1\backslash B_{r_0})} 2(q-\phi) \left(\langle V,\frac{\p}{\p r}\rangle-\frac{\p\phi}{\p r}\right)\dd s.
  \end{split}
 \end{equation}
 Recall the Pohozaev formulae \eqref{Pohozaev identity} or its consequence \eqref{angle derivative}, and note that they hold also on the annulus domains. Note also that
 \begin{equation}
  \int_{B_1\backslash B_{r_0}} |\dd q-\dd\phi|^2\dd x
  \ge \int_{B_1\backslash B_{r_0}} \frac{1}{r^2}\left|\frac{\p\phi}{\p\theta}\right|^2\dd x.
 \end{equation}
 Therefore we get
 \begin{equation}
  \begin{split}
   &\frac{1}{2}\int_{B_1\backslash B_{r_0}} |\na\phi|^2+\langle\psi,\gamma(\p_r)\widetilde{\na}_{\p_r}\psi\rangle
                  -\frac{1}{6}\Rm(\psi)+ F_{11}\cos{2\theta}+F_{12}\sin{2\theta} \dd x \\
  \le& \int_{B_1\backslash B_{r_0}} 2(q-\phi)\left( A(\phi)(\na\phi,\na\phi)+f\right)+|V|^2\dd x
      +\int_{\p(B_1\backslash B_{r_0})} 2(q-\phi) \left(\langle V,\frac{\p}{\p r}\rangle-\frac{\p\phi}{\p r}\right)\dd s.
  \end{split}
 \end{equation}
 From this it follows that
 \begin{equation}
  \begin{split}
   \int_{B_1\backslash B_{r_0}} |\na\phi|^2\dd x
   \le& \int_{B_1\backslash B_{r_0}} |A|^2|\psi|^4+|\widetilde{\na}\psi|^{\frac{4}{3}}+32|Q\chi|^2|\psi|^2\dd x \\
   &+\int_{\p(B_1\backslash B_{r_0})} 8(q-\phi) \left(\langle V,\frac{\p}{\p r}\rangle-\frac{\p\phi}{\p r}\right)\dd s\\
   +16\sup_{B_1\backslash B_{r_0}}&|q-\phi| \int_{B_1\backslash B_{r_0}} |A|^2|\na\phi|^2
          +|A|(|A|+|\na A|)|\psi|^4+|\psi|^2|Q\chi|^2\dd x. \\
  \end{split}
 \end{equation}
 Then we rescale back to $A_{r_2,r_1}$. The universal constant $C_1$ can be taken to be $16$, for instance.

\end{proof}

Finally we can show the energy identities, Theorem \ref{energy identity thm}. The corresponding ones for Dirac-harmonic maps with curvature term were obtained in \cite{jost2015geometric}, following the scheme of \cite{ding1995energy, chen2005regularity} and using a method which is based on a type of three circle lemma. Here we apply a method in the same spirit as those in \cite{ye1994gromov, zhao2006energy}. Since we have no control of higher derivatives of gravitinos, the strong convergence assumption on gravitinos is needed here. We remark that the Pohozaev type identity established in Theorem \ref{Pohozaev Identity} is crucial in the proof of this theorem.

\begin{proof}[Proof of Theorem \ref{energy identity thm}]

 The uniform boundedness of energies implies that there is a subsequence converging weakly in $W^{1,2}\times L^4$ to a limit $(\phi,\psi)$ which is a weak solution with respect to $\chi$. Also the boundedness of energies implies that the blow-up set $\mathcal{S}$ consists of only at most finitely many points (possibly empty). If $\mathcal{S}=\emptyset$, then the sequence converges strongly and the conclusion follows directly. Now we assume it is not empty, say $\mathcal{S}=\{p_1,\dots,p_I\}$. Moreover, using the small energy regularities and compact Sobolev embeddings, by a covering argument similar to that in \cite{sacks1981existence} we see that there is a subsequence converging strongly in the $W^{1,2}\times L^4$-topology on the subset $(M\backslash \cup_{i=1}^I B_\delta(p_i))$ for any $\delta>0$.

 When the limit gravitino $\chi$ is smooth, by the regularity theorems in \cite{jost2016regularity} together with the removable singularity theorem \ref{removable singularities} we see that $(\phi,\psi)$ is indeed a smooth solution with respect to~$\chi$.

 Since $M$ is compact and blow-up points are only finitely many, we can find small disks $B_{\delta_i}$ being small neighborhood of each blow-up point $p_i$ such that $B_{\delta_i}\cap B_{\delta_j}=\emptyset$ whenever $i\neq j$ and on $M\backslash \bigcup_{i=1}^I B_{\delta_i}$, the sequence~$(\phi_k,\psi_k)$ converges strongly to $(\phi,\psi)$ in $W^{1,2}\times L^4$.

 Thus, to show the energy identities, it suffices to prove that there exist solutions  $(\sigma_i^l, \xi_i^l)$ of \eqref{EL} with vanishing gravitinos (i.e. Dirac-harmonic maps with curvature term) defined on the standard 2-sphere $\sph^2$, $1\le l\le L_i$, such that
 \begin{equation}
  \begin{split}
   \sum_{i=1}^I \lim_{\delta_i\to 0} \lim_{k\to \infty} E(\phi_k;B_{\delta_i})=\sum_{i=1}^I \sum_{l=1}^{L_i} E(\sigma_i^l), \\
   \sum_{i=1}^I \lim_{\delta_i\to 0} \lim_{k\to \infty} E(\psi_k;B_{\delta_i})=\sum_{i=1}^I \sum_{l=1}^{L_i} E(\xi_i^l).
  \end{split}
 \end{equation}
 This will hold if we prove for each $i=1,\cdots, I$,
 \begin{equation}
  \begin{split}
   \lim_{\delta_i\to 0} \lim_{k\to \infty} E(\phi_k;B_{\delta_i})=\sum_{l=1}^{L_i} E(\sigma_i^l), \\
   \lim_{\delta_i\to 0} \lim_{k\to \infty} E(\psi_k;B_{\delta_i})=\sum_{l=1}^{L_i} E(\xi_i^l).
  \end{split}
 \end{equation}

 First we consider the case that there is only one bubble at the blow-up point $p=p_1$. Then what we need to prove is that there exists a solution $(\sigma^1,\xi^1)$ with vanishing gravitino such that
 \begin{equation}
  \begin{split}
   \lim_{\delta\to 0}\lim_{k\to\infty} E(\phi_k;B_\delta)=E(\sigma^1),  \\
   \lim_{\delta\to 0}\lim_{k\to\infty} E(\psi_k;B_\delta)=E(\xi^1).
  \end{split}
 \end{equation}
 For each $(\phi_k,\psi_k)$, we choose $\lambda_k$ such that
 \begin{equation}
  \max_{x\in D_{\delta}(p)} E\left(\phi_k, \psi_k; B_{\lambda_k}(x)\right)=\frac{\vep_1}{2},
 \end{equation}
 and then choose $x_k\in B_\delta(p)$ such that
 \begin{equation}
  E(\phi_k,\psi_k;B_{\lambda_k}(x_k))=\frac{\vep_1}{2}.
 \end{equation}
 Passing to a subsequence if necessary, we may assume that $\lambda_k\to 0$ and $x_k\to p$ as $k\to\infty$. Denote
 \begin{align}
  \tilde{\phi_k}(x)=\phi_k(x_k+\lambda_k x),  & & \tilde{\psi}_k(x)=\lambda_k^{\frac{1}{2}}\psi_k(x_k+\lambda_k x),
  & & \tilde{\chi}_k=\lambda_k^{\frac{1}{2}}\chi_k(x_k+\lambda_k x).
 \end{align}
 Then $(\tilde{\phi}_k,\tilde{\psi}_k)$ is a solution with respect to $\tilde{\chi}_k$ on the unit disk $B_1(0)$, and by the rescaled conformal invariance of the energies,
 \begin{equation}
  \begin{split}
   &E(\tilde{\phi}_k,\tilde{\psi}_k;B_1(0))=E(\phi_k,\psi_k; B_{\lambda_k}(x_k))=\frac{\vep_1}{2}< \vep_1, \\
   &E(\tilde{\phi}_k,\tilde{\psi}_k;B_R(0))=E(\phi_k,\psi_k; B_{\lambda_k R}(x_k))\le \Lambda.
  \end{split}
 \end{equation}
 Recall that the $\chi_k$'s are assumed to converge in $W^{1,4/3}$ norm. Due to the rescaled conformal invariance in Lemma \ref{conformal transformation lemma}, we have, for any fixed $R>0$,
 \begin{equation}
  \begin{split}
   \int_{B_R(0)} |\tilde{\chi}_k|^4+|\widehat{\na}\tilde{\chi}_k|^{\frac{4}{3}}\dd x
   =\int_{B_{\lambda_k R}(x_k)} |\chi_k|^4+|\widehat{\na}\chi_k|^{\frac{4}{3}}\dv_g\to 0
  \end{split}
 \end{equation}
 as $k\to\infty$. It follows that $\tilde{\chi}_k$ converges to $0$.

 Since we assumed that there is only one bubble, the sequence $(\tilde{\phi}_k,\tilde{\psi}_k)$ strongly converge to some $(\tilde{\phi},\tilde{\psi})$ in $W^{1,2}(B_R,N)\times L^4(B_R,S\times \R^K)$ for any $R\ge 1$. Indeed, this is clearly true for $R\le 1$ because of the small energy regularities, and if for some $R_0\ge 1$, the convergence on $B_{R_0}$ is not strong, then the energies would concentrate at some point outside the unit disk, and by rescaling a second nontrivial bubble would be obtained, contradicting the assumption that there is only one bubble. Thus, since $R$ can be arbitrarily large, we get a nonconstant (because energy $\ge \frac{\vep_1}{2}$) solution on $\R^2$. By stereographic projection we obtain a nonconstant solution on $\sph^2\backslash\{N\}$ with energy bounded by $\Lambda$ and with zero gravitino. Thanks to the removable singularity theorem for Dirac-harmonic maps with curvature term (apply Theorem \ref{removable singularities} with  $\chi\equiv 0$ or see \cite[Theorem 6.1]{jost2015geometric}), we actually have a nontrivial solution on $\sph^2$. This is the first bubble at the blow-up point $p$.

 Now consider the neck domain
 \begin{equation}
  A(\delta, R;k)\coloneqq \{x\in\R^2 |\lambda_k R\le |x-x_k|\le \delta \}.
 \end{equation}
 It suffices to show that
 \begin{equation}\label{no energy on neck}
  \begin{split}
   \lim_{R\to\infty} \lim_{\delta\to 0} \lim_{k\to\infty} E(\phi_k,\psi_k; A(\delta,R;k))=0.
  \end{split}
 \end{equation}
 Note that the strong convergence assumption on $\chi_k$'s implies that
 \begin{equation}\label{limit of gravitinos on necks}
  \lim_{\delta\to 0} \lim_{k\to\infty} \int_{A(\delta,R; k)} |\chi_k|^4+|\widehat{\na}\chi_k|^{\frac{4}{3}} \dd x
  \le\lim_{\delta\to 0} \int_{B_{2\delta(p)}}|\chi|^4+|\widehat{\na}\chi|^{\frac{4}{3}}\dd x=0,
 \end{equation}
 by, say, Lebesgue's dominated convergence theorem.

 To show \eqref{no energy on neck}, it may be more intuitive to transform them to a cylinder. Let $(r_k,\theta_k)$ be the polar coordinate around $x_k$. Consider the maps
 \begin{equation}
  f_k\colon (\R\times\sph^1,(t,\theta), g=\dd t^2+\dd \theta^2)\to (\R^2, (r_k,\theta_k), \dd s^2=\dd r_k^2+r_k^2\dd \theta_k^2)
 \end{equation}
 given by $f_k(t,\theta)=(e^{-t},\theta)$. Then $f_k^{-1}(A(\delta,R;k))=(-\log \delta,-\log\lambda_k R)\times \sph^1\equiv P_k(\delta,R)\equiv P_k$. After a translation in the $\R$ direction, the domains $P_k$ converge to the cylinder $\R\times\sph^1$.  It is known that $f_k$ is conformal
 \begin{equation}
  f_k^*(\dd r_k^2+r_k^2\dd \theta_k^2)=e^{-2t}(\dd t^2+\dd \theta^2).
 \end{equation}
 Thus a solution defined in a neighborhood of $x_k$ is transformed to a solution defined on part of the cylinder via
 \begin{align}
  \Phi_k(x)\coloneqq \phi_k\circ f_k(x), & & \Psi_k(x)\coloneqq e^{-\frac{t}{2}} B\psi_k\circ f_k(x),
  & & X_k(x)\coloneqq e^{-\frac{t}{2}} B\chi_k\circ f_k(x),
 \end{align}
 where $B$ is the isomorphism given in Lemma \ref{conformal transformation lemma}. Note that
 \begin{equation}
  E(\Phi_k,\Psi_k;P_k)=E(\phi_k,\psi_k;A(\delta,R;k)) \le \Lambda,
 \end{equation}
 and that by the remark after Lemma \ref{conformal transformation lemma}, for any $R\in (0,\infty)$,
 \begin{equation}\label{limit behavior of gravitinos}
  \lim_{\delta\to 0} \lim_{k\to\infty} \int_{P_k(\delta,R)} |X_k|^4+|\widehat{\na} X_k|^{\frac{4}{3}} \dd x
  =\lim_{\delta\to 0} \int_{A(\delta,R;k)} |\chi_k|^4+|\widehat{\na}\chi_k|^{\frac{4}{3}}\dd x=0,
 \end{equation}
 which follows from \eqref{limit of gravitinos on necks}.

 For any fixed $T>0$, observe that $(\phi_k,\psi_k,\chi_k)$ converges strongly to $(\phi,\psi,\chi)$ on the annulus domain $B_{\delta}(p)\backslash B_{\delta e^{-T}}(p)$, which implies that $(\Phi_k,\Psi_k,X_k)$ converges strongly to $(\Phi,\Psi,X)$ on $P_T\equiv[T_0, T_0+T]\times\sph^1$, where $T_0=-\log\delta$ and
 \begin{align}
  \Phi(x)\coloneqq \phi\circ f(x), & & \Psi(x)\coloneqq e^{-\frac{t}{2}} B\psi\circ f(x),
  & & X(x)\coloneqq e^{-\frac{t}{2}} B\chi\circ f(x),
 \end{align}
 where $f(t,\theta)=(e^{-t},\theta)$.

 Let $0<\vep<\vep_1$ be given. Because of $E(\phi,\psi)\le \Lambda$ and \eqref{limit behavior of gravitinos}, there exists a $\delta>0$ small such that $E(\phi,\psi;B_\delta(p))<\frac{\vep}{2}$ and such that
 \begin{equation}\label{gravitinos on small disks}
  \int_{B_\delta(x_k)} |\chi_k|^4+|\widehat{\na}\chi_k|^{\frac{4}{3}}\dd x < \frac{\vep}{2}
 \end{equation}
 for large $k$. Thus for the $T$ given above, there is a $k(T)>0$ such that for $k>k(T)$,
 \begin{equation}\label{small end-1}
  E(\Phi_k,\Psi_k;P_T) <\vep.
 \end{equation}
 In a similar way, we denote $T_k\equiv |\log\lambda_k R|$ and $Q_{T,k}\equiv [T_k -T, T_k]\times\sph^1$. Then for $k$ large enough,
 \begin{equation}\label{small end-2}
  E(\Phi_k,\Psi_k; Q_{T,k})<\vep.
 \end{equation}

 For the part in between $[T_0+T, T_k-T]$, we claim that there is a $k(T)$ such that for $k\ge k(T)$,
 \begin{equation}\label{equally smallness}
  \int_{[t,t+1]\times\sph^1} |\na\Phi_k|^2+|\Psi_k|^4 \dd x<\vep, \qquad \forall t\in[T_0, T_k-1].
 \end{equation}
 To prove this claim we will follow the arguments as in the case of harmonic maps in \cite{ding1995energy} and Dirac-harmonic maps in \cite{chen2006dirac}. Suppose this is false, then there exists a sequence $\{t_k\}$ such that $t_k\to\infty$ as $k\to\infty$ and
 \begin{equation}
  \int_{[t_k,t_k+1]\times\sph^1} |\na\Phi_k|^2+|\Psi_k|^4 \dd x\ge \vep.
 \end{equation}
 Because of the energies near the ends are small by \eqref{small end-1} and \eqref{small end-2}, we know that $t_k-T_0, T_k-t_k\to \infty$. Thus by a translation from $t$ to $t-t_k$, we get solutions $(\tilde{\Phi}_k, \tilde{\Psi}_k; \tilde{X}_k)$, and for all $k$ it holds that
 \begin{equation}
  \int_{[0,1]\times\sph^1} |\na\tilde{\Phi}_k|^2+|\tilde{\Psi}_k|^4 \dd x\ge \vep.
 \end{equation}
 From \eqref{limit behavior of gravitinos} we see that $\tilde{X}_k$ go to $0$ in $W^{1,\frac{4}{3}}_{loc}$. Due to the bounded energy assumption we may assume that $(\tilde{\Phi}_k,\tilde{\Psi}_k)$ converges weakly to some~$(\tilde{\Phi}_\infty, \tilde{\Psi}_\infty)$ in $W^{1,2}_{loc}\times L^4_{loc}(\R\times\sph^1)$, passing to a subsequence if necessary. Moreover, by a similar argument as before, the convergence is strong except near at most finitely many points.  If this convergence is strong on $\R\times\sph^1$, we obtain a nonconstant solution with respect to zero gravitino on the whole of $\R\times\sph^1$, hence, by a conformal transformation, a Dirac-harmonic map with curvature term on $\sph^2\backslash\{N,S\}$ with finite energy. The removable singularity theorem then ensures a nontrivial solution on $\sph^2$, contradicting the assumption that $L=1$. On the other hand if the sequence $(\tilde{\Phi}_k,\tilde{\Psi}_k;\tilde{X}_k)$ does not converge strongly to~$(\tilde{\Phi}_\infty, \tilde{\Psi}_\infty; 0)$, then we may find some point $(t_0,\theta_0)$ at which the sequence blows up, giving rise to another nontrivial solution with zero gravitino on $\sph^2$, again contradicting $L=1$. Therefore \eqref{equally smallness} has to hold.

Applying a finite decomposition argument similar to \cite{ye1994gromov, zhao2006energy}, we can divide $P_k$ into finitely many parts
 \begin{align}
  P_k=\bigcup_{n=1}^{\mathcal{N}} P_k^n,& & P_k^n\coloneqq [T^{n-1}_k,T_k^{n}]\times \sph^1, & & T^0_k=T_0, & & T_k^{N}=T_k,
 \end{align}
 where $\mathcal{N}$ is a uniform integer, and on each part the energy of $(\Phi_k,\Psi_k)$ is bounded by $\delta=(\frac{1}{8C_0C_1 C(A)})^2$ where we put $C(A)\coloneqq |A|(|A|+|\na A|)$. Actually, since $E(\Phi_k,\Psi_k;P_k)\le\Lambda$, we know that it can be always divided into at most $\mathcal{N}=[\Lambda/\delta] +1$ parts such that on each part the energy is not more than $\delta$.

 We will use the notation
 \begin{align}
  P^n_k=[T_k^{n-1}, T_k^n]\times\sph^1, & & \bar{P}_k^n=[T_k^{n-1}-1,T_k^n]\times\sph^1,
 \end{align}
 and $\Delta P^n_k=\bar{P}_k^n- P_k^n$. With Lemma \ref{estimates of psi on annulus} on the annuli, we get
 \begin{equation}
  \begin{split}
   \|\Psi_k\|_{L^{4}(P_k^n)}
   &+\|\widetilde{\na}\Psi_k\|_{L^{\frac{4}{3}}(P^n_k)}\\
   \le& C_0\left(|A|\|\na\Phi_k\|_{L^2(\bar{P}_k^n)}+\|QX_k\|^2_{L^4(\bar{P}_k^n)}
               +|A|^2\|\Psi_k\|^2_{L^4(\bar{P}_k^n)}\right) \|\Psi_k\|_{L^4(\bar{P}_k^n)} \\
      &+C\|QX_k\|_{L^4(\bar{P}_k^n)}\|\na\Phi_k\|_{L^2(\bar{P}_k^n)}
       +C\|\Psi_k\|_{L^4(\Delta P_k^n)} \\
      &+C\|\widetilde{\na}\Psi_k\|_{L^{\frac{4}{3}}(T_k^n\times\sph^1)} +C\|\Psi_k\|_{L^4(T_k^n\times\sph^1)}\\
   \le&\frac{1}{4}\|\Psi_k\|_{L^4(P_k^n)}+\frac{1}{4}\|\Psi_k\|_{L^4(\Delta P_k^n)}
        +C\|QX_k\|_{L^4(\bar{P}_k^n)}\|\na\Phi_k\|_{L^2(P_k^n)} \\
      & +C\|QX_k\|_{L^4(\bar{P}_k^n)}\|\na\Phi_k\|_{L^2(\Delta P_k^n)}
        +C\|\Psi_k\|_{L^4(\Delta P_k^n)}\\
      &+C\|\widetilde{\na}\Psi_k\|_{L^{\frac{4}{3}}(T_k^n\times\sph^1)} +C\|\Psi_k\|_{L^4(T_k^n\times\sph^1)},
  \end{split}
 \end{equation}
 where we have used the fact that $\|QX_k\|_{L^4(P_k)}$ can be very small when we take $k$ large and $\delta$ small, because of \eqref{limit behavior of gravitinos}. Note that on $\Delta P_k^n$ the energies of $(\Phi_k,\Psi_k)$ are bounded by $\vep$. Moreover, since on $[T_k^n-1/2, T_k^n+1/2]\times \sph^1$ the small energy assumption holds, thus the boundary terms above are also controlled by $C \vep$ due to the small regularity theorems. Therefore, combining with \eqref{gravitinos on small disks}, we get
 \begin{equation}\label{estimate of Psi_k on P_k}
  \|\Psi_k\|_{L^4(P_k^n)}+\|\widetilde{\na}\Psi_k\|_{L^{\frac{4}{3}}(P^n_k)}
  \le C(\Lambda)\vep^{\frac{1}{4}}.
 \end{equation}
 It remains to control the energy of $\Phi_k$ on $P_k^n$. We divide $P_k^n$ into smaller parts such that on each of them the energy of $\Phi_k$ is smaller than $\vep$. Then the small regularity theorems imply that~$|\phi_k-q_k|\le C_*\sqrt{\vep}$ (which may be assumed to be less than 1), see \eqref{Holder estimate}. Then applying Lemma \ref{estimates of phi on annulus} (transformed onto the annuli) on each small part and summing up the inequalities, one sees that
 \begin{equation}
  \begin{split}
   \int_{P_k^n} |\na\Phi_k|^2\dd x
   \le& C_1 C(A)C_*\sqrt{\vep} \int_{P_k^n} |\na\Phi_k|^2+|\Psi_k|^4+|QX_k|^2|\Psi_k|^2 \dd x  \\
      &  +CC_*\sqrt{\vep}\int_{\p P_k^n} |QX_k||\Psi_k|+|\na\Phi_k| \dd s \\
      &  +C\int_{P_k^n} |\Psi_k|^4+|\widetilde{\na}\Psi_k|^{\frac{4}{3}}+|QX_k|^2|\Psi_k|^2 \dd x.
  \end{split}
 \end{equation}
 Using an argument similar to the above one, and combining with \eqref{estimate of Psi_k on P_k}, we see that
 \begin{equation}
  \int_{P_k^n} |\na\Phi_k|^2 \dd x\le C(\Lambda)\vep^{\frac{1}{3}},
 \end{equation}
 with $C(\Lambda)$ being a uniform constant independent of $k$, $n$, $\mathcal{N}$ and the choice of $\vep$.
 Therefore, on the neck domains,
 \begin{equation}
  \int_{P_k}|\na\Phi_k|^2+|\Psi_k|^4\dd x=\sum_{n=1}^{\mathcal{N}} \int_{P_k^n} |\na\Phi_k|^2+|\Psi_k|^4 \dd x\le C\mathcal{N}\vep^{\frac{1}{3}}.
 \end{equation}
 As $\mathcal{N}$ is uniform (independent of $\vep$ and $k$) and $\vep$ can be arbitrarily small, thus \eqref{no energy on neck} follows, and this accomplishes the proof for the case where there is only one bubble.

 When there are more bubbles, we apply an induction argument on the number of bubbles in a standard way, see \cite{ding1995energy} for the details. The proof is thus finished.

\end{proof}

We remark that the conclusion clearly holds when the gravitino $\chi$ is fixed. Then as Theorem \ref{energy identity thm} shows, a sequence of solutions with bounded energies will contain a weakly convergent subsequence and at certain points this subsequence blows up to give some bubbles. In the language of Teichm\"uller theory \cite{tromba2012teichmuller}, the solution space can be compactified by adding some boundaries, which consists of the Dirac-harmonic maps with curvature term on two-dimensional spheres. This is in particular true when the sequence of gravitinos is assumed to be uniformly small in the $C^1$ norm, which is of interest when one wants to consider  perturbations of the zero gravitinos.

\section{Appendix}

In this appendix we show that a weak solution to a system with coupled first and second order elliptic equations on the punctured unit disk can be extended as a weak solution on the whole unit disk, when the system satisfies some natural conditions. This is observed for elliptic systems of second order in the two-dimensional calculus of variations, see \cite[Appendix]{jost1991two}, and we generalize it in the following form.

As before, we denote the unit disk in $\R^2$ by $B_1$ and the punctured unit disk by $B_1^*=B_1\backslash\{0\}$. Let $S$ denote the trivial spinor bundle over $B_1$.

\begin{thm}\label{weak solutions extension}
 Suppose that $\phi\in W^{1,2}(B_1^*, \R^K)$, $\psi\in L^4(B_1^*, S\otimes\R^K)$, $\chi\in L^4(B_1,S\otimes\R^2)$, and they satisfy the system on $B_1^*$
 \begin{equation}\label{mixed system}
  \begin{split}
   \Delta\phi&=F(x,\phi,\na\phi,\psi,\chi)+\diverg_x(V), \\
   \pd\psi   &=G(x,\phi,\na\phi,\psi,\chi),
  \end{split}
 \end{equation}
 in the sense of distributions; i.e. for any $u\in W^{1,2}_0\cap L^\infty(B_1^*,\R^K)$ and any $v\in W_0^{1,\frac{4}{3}}(B_1^*, S\otimes\R^K)$, it holds that
 \begin{equation}
  \begin{split}
   \int_{B_1^*}\langle\na\phi,\na u\rangle\dd x
      &=-\int_{B_1^*} \langle F(x,\phi,\na\phi,\psi,\chi), u\rangle\dd x
          +\int_{B_1^*} \langle V(x,\phi,\na\phi,\psi,\chi), \na u\rangle \dd x, \\
   \int_{B_1^*}\langle\psi,\pd v\rangle \dd x
      &=\int_{B_1^*}\langle G(x,\phi,\na\phi,\psi,\chi),v\rangle\dd x.
  \end{split}
 \end{equation}
 Moreover, assume that the following growth condition is satisfied:
 \begin{equation}\label{growth condition}
  \begin{split}
   |F(x,t,p,q,s)|+|V(x,t,p,q,s)|^2+|G(x,t,p,q,s)|^{\frac{4}{3}} \le C\left(1+|p|^2+|q|^4+|s|^4\right).
  \end{split}
 \end{equation}
 Then for any $\eta\in W_0^{1,2}\cap L^\infty(B_1,\R^K)$ and any $\xi\in W^{1,\frac{4}{3}}_0(B_1, S\otimes\R^K)$, it also holds that
 \begin{equation}\label{weak solution on the whole disk}
  \begin{split}
   \int_{B_1}\langle\na\phi,\na\eta\rangle\dd x
      &=-\int_{B_1} \langle F(x,\phi,\na\phi,\psi,\chi), \eta\rangle\dd x
          +\int_{B_1} \langle V(x,\phi,\na\phi,\psi,\chi), \na\eta\rangle \dd x, \\
   \int_{B_1}\langle\psi,\pd\xi\rangle \dd x
      &=\int_{B_1}\langle G(x,\phi,\na\phi,\psi,\chi),\xi\rangle\dd x.
  \end{split}
 \end{equation}
 That is, when the growth condition \eqref{growth condition} is satisfied, any weak solution to \eqref{mixed system} on the punctured disk $B_1^*$ is also a weak solution on the whole disk.
\end{thm}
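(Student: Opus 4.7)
The plan is to follow the standard removability of an isolated point singularity for weak solutions by inserting a radial cutoff in the test function, exploiting the fact that a single point has zero capacity in two dimensions together with the integrability of $|\nabla\phi|^2$, $|\psi|^4$, $|\chi|^4$. Concretely, I would fix test functions $\eta\in W^{1,2}_0\cap L^\infty(B_1,\R^K)$ and $\xi\in W^{1,4/3}_0(B_1,S\otimes\R^K)$, pick a smooth radial cutoff $\zeta_\eps\colon B_1\to[0,1]$ with $\zeta_\eps\equiv 0$ on $B_\eps$, $\zeta_\eps\equiv 1$ on $B_1\setminus B_{2\eps}$ and $|\na\zeta_\eps|\le C/\eps$, and observe that $\eta\zeta_\eps\in W^{1,2}_0\cap L^\infty(B_1^*)$ and $\xi\zeta_\eps\in W^{1,4/3}_0(B_1^*)$ are admissible test functions for the assumed weak formulation on $B_1^*$. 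The goal is then to send $\eps\to 0$ and identify the limit with the weak formulation on $B_1$ in \eqref{weak solution on the whole disk}.

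First I would verify that the growth condition \eqref{growth condition}, together with $\phi\in W^{1,2}$, $\psi,\chi\in L^4$, forces $F\in L^1(B_1)$, $V\in L^2(B_1)$ and $G\in L^{4/3}(B_1)$, and also recall the two-dimensional Sobolev embedding $W^{1,4/3}_0(B_1)\hookrightarrow L^4(B_1)$ so that $\xi\in L^4$. These are exactly the integrability needed to apply Lebesgue dominated convergence to the ``bulk'' pieces
\begin{equation}
\int \langle F,\eta\rangle\zeta_\eps\,\dd x,\qquad \int\langle V,\na\eta\rangle\zeta_\eps\,\dd x,\qquad \int\langle\psi,\zeta_\eps\pd\xi\rangle\,\dd x,\qquad \int\langle G,\xi\rangle\zeta_\eps\,\dd x,
\end{equation}
all of which converge to the corresponding integrals over $B_1$ because $\zeta_\eps\to 1$ a.e.\ on $B_1$ and is uniformly bounded by $1$.

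The remaining pieces are the ``commutator'' terms containing $\na\zeta_\eps$, namely $\int\langle\na\phi,\eta\,\na\zeta_\eps\rangle\,\dd x$, $\int\langle V,\eta\,\na\zeta_\eps\rangle\,\dd x$, and $\int\langle\psi,\na\zeta_\eps\cdot\xi\rangle\,\dd x$, and I must show each vanishes as $\eps\to 0$. The uniform bound $\|\na\zeta_\eps\|_{L^2(B_1)}^2\le C\eps^{-2}\cdot|B_{2\eps}\setminus B_\eps|\le C$ is crucial: combined with Cauchy--Schwarz and the fact that $\supp\na\zeta_\eps\subset B_{2\eps}$, I obtain estimates of the form
\begin{equation}
\Big|\int\langle\na\phi,\eta\,\na\zeta_\eps\rangle\,\dd x\Big|\le C\|\eta\|_\infty\|\na\phi\|_{L^2(B_{2\eps})},\qquad \Big|\int\langle V,\eta\,\na\zeta_\eps\rangle\,\dd x\Big|\le C\|\eta\|_\infty\|V\|_{L^2(B_{2\eps})},
\end{equation}
and by Hölder with exponents $(4,2,4)$,
\begin{equation}
\Big|\int\langle\psi,\na\zeta_\eps\cdot\xi\rangle\,\dd x\Big|\le C\|\psi\|_{L^4(B_{2\eps})}\|\xi\|_{L^4(B_{2\eps})}.
\end{equation}
All three right-hand sides tend to zero since $|\na\phi|^2,|V|^2,|\psi|^4,|\xi|^4$ are integrable and the set $B_{2\eps}$ shrinks to a null set.

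The only subtlety I anticipate is in the Dirac equation: when cutting off $\xi$ one should use $\pd(\xi\zeta_\eps)=\zeta_\eps\,\pd\xi+\na\zeta_\eps\cdot\xi$ (with Clifford multiplication on the second term), so the commutator term is $\int\langle\psi,\na\zeta_\eps\cdot\xi\rangle$, which is exactly the one controlled above via the $L^4$--$L^2$--$L^4$ Hölder estimate; this is the main place where the two-dimensional Sobolev embedding $W^{1,4/3}_0\hookrightarrow L^4$ enters. Everything else is a direct transcription of the bump-cutoff argument used for second order equations in \cite[Appendix]{jost1991two}, and combining the bulk convergences with the vanishing of the commutator terms yields \eqref{weak solution on the whole disk} for every admissible $(\eta,\xi)$, completing the proof.
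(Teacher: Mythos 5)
Your proposal is correct and follows essentially the same route as the paper's Appendix: test with a cutoff of the given test functions, pass to the limit in the bulk terms by dominated convergence (using that the growth condition puts $F\in L^1$, $V\in L^2$, $G\in L^{4/3}$ and $W^{1,4/3}_0\hookrightarrow L^4$), and show the commutator terms containing the gradient of the cutoff vanish. The only difference is technical: the paper uses a logarithmic cutoff $\rho_m$ whose gradient tends to $0$ in $L^2(B_1)$ and pairs it with the fixed global norms of $\na\phi$, $V$, $\psi$, $\xi$, whereas you use a dyadic cutoff with merely bounded $L^2$-gradient norm and instead let the factors $\|\na\phi\|_{L^2(B_{2\eps})}$, $\|V\|_{L^2(B_{2\eps})}$, $\|\psi\|_{L^4(B_{2\eps})}\|\xi\|_{L^4(B_{2\eps})}$ vanish by absolute continuity of the integral; both implementations are complete and equivalent in strength.
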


\begin{proof}
 For $m\ge 2$, define
 \begin{equation}
  \rho_m(r)=
  \begin{cases}
   1, & \textnormal{for } r\le\frac{1}{m^2}, \\
   \log(1/mr)/\log m, & \textnormal{for } (1/m)^2\le r\le 1/m, \\
   0, & \textnormal{for } r\ge 1/m.
  \end{cases}
 \end{equation}
 Then for any $\eta\in W_0^{1,2}\cap L^\infty(B_1,\R^K)$ and any $\xi\in W^{1,\frac{4}{3}}_0(B_1, S\otimes\R^K)$, set
 \begin{equation}
  \begin{split}
   u_m(x)&=\left( 1-\rho_m(|x|)\right)\eta(x)\in W^{1,2}_0\cap L^\infty(B_1^*,\R^K), \\
   v_m(x)&=\left( 1-\rho_m(|x|)\right)\xi(x)\in  W_0^{1,\frac{4}{3}}(B_1^*, S\otimes\R^K).
  \end{split}
 \end{equation}
 In fact, $|1-\rho_m|\le 1$ and
 \begin{equation}
  \left|\na\rho_m(|x|)\right|=\frac{1}{\log m} \frac{1}{r};
 \end{equation}
 hence
 \begin{equation}
  \int_{B_1}|\na\rho_m(|x|)|^2\dd x=\frac{2\pi}{(\log m)^2}\int_{m^{-2}}^{m^{-1}} \frac{1}{r^2} r\dd r=\frac{2\pi}{\log m}
 \end{equation}
 which goes to 0 as $m\to\infty$. It follows that $u_m\in W^{1,2}_0$. Recalling  the Sobolev embedding in dimension two, $W^{1,\frac{4}{3}}_0(B_1^*)\hookrightarrow L^4(B_1^*)$, $v_m$ lies in $W^{1,\frac{4}{3}}_0(B_1^*)$.

 By assumption,
 \begin{equation}
  \int_{B_1^*}\langle\na\phi,\na u_m\rangle\dd x
      =-\int_{B_1^*} \langle F(x,\phi,\na\phi,\psi,\chi), u_m\rangle\dd x
          +\int_{B_1^*} \langle V(x,\phi,\na\phi,\psi,\chi), \na u_m\rangle \dd x.
 \end{equation}
 Note that $F(x,\phi,\na\phi,\psi,\chi)\in L^1(B_1^*)$ by the growth condition \eqref{growth condition} and $|u_m|\le |\eta|\in L^\infty$. Since $u_m$ converges to $\eta$ pointwisely almost everywhere, thus by Lebesgue's dominated convergence theorem
 \begin{equation}
  \lim_{m\to\infty} \int_{B_1^*} \langle F(x,\phi,\na\phi,\psi,\chi), u_m\rangle\dd x
  =\int_{B_1} \langle F(x,\phi,\na\phi,\psi,\chi), \eta\rangle\dd x.
 \end{equation}
 For the other two terms, note that $\na u_m=-\na\rho_m(|x|)\eta(x)+(1-\rho_m(|x|))\na\eta(x)$. Then
 \begin{equation}
  \left|\int_{B_1^*}\langle\na\phi, -\na\rho_m(|x|)\eta(x)\rangle\right|
  \le \|\na\phi\|_{L^2(B_1^*)}\|\eta\|_{L^\infty(B_1)}\|\na\rho_m\|_{L^2(B_1^*)} \to 0,
 \end{equation}
 as $m\to\infty$, while
 \begin{equation}
  \int_{B_1^*} \langle\na\phi, (1-\rho_m(|x|))\na\eta\rangle\dd x\to \int_{B_1}\langle\na\phi, \na\eta\rangle\dd x
 \end{equation}
 again by Lebesgue's dominated convergence theorem. Thus
 \begin{equation}
  \lim_{m\to\infty} \int_{B_1^*}\langle\na\phi,\na u_m\rangle\dd x
  =\int_{B_1} \langle\na\phi,\na\eta\rangle\dd x.
 \end{equation}
 Similarly
 \begin{equation}
  \lim_{m\to\infty} \int_{B_1^*} \langle V(x,\phi,\na\phi,\psi,\chi),\na u_m\rangle\dd x
  =\int_{B_1} \langle V(x,\phi,\na\phi,\psi,\chi),\na\eta\rangle\dd x.
 \end{equation}
 Therefore, the first equation of \eqref{weak solution on the whole disk} holds.

 Next we show that the second equation of \eqref{weak solution on the whole disk} also holds. Indeed, by assumption
 \begin{equation}
  \int_{B_1^*}\langle\psi,\pd v_m\rangle \dd x
      =\int_{B_1^*}\langle G(x,\phi,\na\phi,\psi,\chi),v_m\rangle\dd x.
 \end{equation}
 Now by the growth condition \eqref{growth condition}, $G(x,\phi,\na\phi,\psi,\chi)\in L^{\frac{4}{3}}(B_1)$, and by Sobolev embedding $\xi\in L^4(B_1)$, thus Lebesgue's dominated convergence theorem implies
 \begin{equation}
  \lim_{m\to\infty} \int_{B_1^*}\langle G(x,\phi,\na\phi,\psi,\chi),v_m\rangle \dd x
  =\int_{B_1} \langle G(x,\phi,\na\phi,\psi,\chi), \xi\rangle\dd x.
 \end{equation}
 On the other hand, $\pd v_m=-\gamma(\na\rho_m(|x|)))\xi+(1-\rho_m(|x|)\pd\xi$, and
 \begin{equation}
  \left|\int_{B_1^*} \langle\psi,-\gamma(\na\rho_m(|x|)))\xi\rangle\dd x\right|
  \le \|\psi\|_{L^4(B_1)}\|\xi\|_{L^4(B_1)}\|\na\rho_m\|_{L^2(B_1)}\to 0,
 \end{equation}
 as $m\to\infty$, while  Lebesgue's dominated convergence theorem implies
 \begin{equation}
  \int_{B_1^*} \langle\psi, (1-\rho_m)\pd\xi\rangle\dd x
  \to \int_{B_1} \langle\psi, \pd\xi\rangle\dd x
 \end{equation}
 since $\pd\xi\in L^{\frac{4}{3}}(B_1)$ and $\psi\in L^4(B_1^*)$. This accomplishes the proof.

\end{proof}




\end{document}